\newcommand{\R}{\mathbb{R}}
\newcommand{\ignore}[1]{}
\def \Aff{\hbox{\rm Aff}}
\def \Diff{\hbox{\rm Diff}}
\def \SL{\hbox{\rm SL}}
\def \R{{\mathbb R}}
\newtheorem{thm}{Theorem}[section]
\newtheorem{lemma}[thm]{Lemma}
\newtheorem{remark}[thm]{Remark}
\newtheorem{cor}[thm]{Corollary}
\newtheorem{prop}[thm]{Proposition}
\newtheorem{defin}[thm]{Definition}
\theoremstyle{plain}
\author{Alexander Gorodnik, Theron Hitchman, Ralf Spatzier}
\title[Regularity of conjugacies]{Regularity of conjugacies of algebraic actions of Zariski dense groups}
\address{School of Mathematics \\ University of Bristol \\ Bristol BS8 1TW, U.K.}
\email{a.gorodnik@bristol.ac.uk}
\address{Department of Mathematics \\ University of Northern Iowa \\Cedar Falls, IA 50614-0506}
\email{theron.hitchman@uni.edu}
\address{Department of Mathematics \\ University of Michigan \\ Ann Arbor, MI 48109-1043}
\email{spatzier@umich.edu}
\thanks{A.G. was supported by NSF grants DMS 0400631, 0654413 and RCUK Fellowship,
 R.S. was supported by NSF Grant DMS 0604857 }
\begin{document}
\begin{abstract}
Let $\alpha_0$ be an affine action of a discrete group $\Gamma$ on a compact homogeneous space $X$
and $\alpha_1$ a smooth action of $\Gamma$ on $X$ which is $C^1$-close to $\alpha_0$.
We show that under some conditions, every topological conjugacy between $\alpha_0$
and $\alpha_1$ is smooth. In particular, our results apply to Zariski dense subgroups
of $\hbox{SL}_d(\mathbb{Z})$ acting on the torus $\mathbb{T}^d$ and Zariski dense subgroups
of a simple noncompact Lie group $G$ acting on a compact homogeneous space $X$
of $G$ with an invariant measure.
\end{abstract}

\maketitle

{\small \tableofcontents}

\section{Introduction}

The investigation of rigidity properties has been at the forefront of research in dynamics in the past
two decades.  Of particular interest  has been the study of higher rank abelian groups and local rigidity
of their actions by  Hurder, Katok, Lewis, and the last author amongst others.  Remarkably,
many such actions cannot be perturbed at all, in the sense  that  any $C^1$-close perturbation is
$C^{\infty}$-conjugate to the original action. Critically, these groups contain higher rank abelian
groups. Similar results were found for higher rank semisimple Lie groups and their lattices by Hurder,
Lewis, Fisher, Margulis,  Qian and others.  We refer to \cite{Fisher-survey} for a more extensive survey
of these developments.

Smoothness of the conjugacy for these actions came as quite a surprise.  Classically, in fact, the stability results of Anosov and later Hirsch, Pugh and Shub  guaranteed a continuous conjugacy or orbit equivalence between a single Anosov  diffeomorphism or flow and their perturbations \cite{HPS}. Simple examples however show that such a conjugacy cannot be even $C^1$ in general.

In the present paper, we investigate similar regularity phenomena for affine actions of a large class of
groups.   Notably, our results do not require the presence of  higher rank subgroups
or any assumptions on the structure of the group.  In particular they
hold for discrete subgroups of rank one semisimple groups. We recall that a group acts {\em affinely} on
a homogeneous space $H/\Lambda$ for $H$ a Lie group and $\Lambda$ a discrete subgroup if every element
acts by an affine diffeomorphism i.e. one which lifts to a composite of a translation and an automorphism
on $H$.  We denote by $\Diff(X)$ the group of $C^\infty$-diffeomorphisms of a space $X$.

For simplicity let us mention two corollaries of our main theorem in Section \ref{s:main}.

\begin{thm}  \label{th:main1}
Let $\Gamma \subset \SL_d(\mathbb{Z})$ for $d \geq 2$  be a finitely generated Zariski
dense subgroup in $\SL_d(\R)$, and $\alpha_0$ the associated action on
the $d$-torus $\mathbb{T}^d$. If a perturbation  $\alpha_1:\Gamma\to\Diff(\mathbb{T}^d)$ is sufficiently $C^1$-close to
 $\alpha_0$, then any $C^0$-conjugacy $\Phi : \mathbb{T}^d \mapsto \mathbb{T}^d$ between $\alpha _0$ and $\alpha _1$
is a $C^\infty$-diffeomorphism.
\end{thm}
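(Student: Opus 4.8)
The plan is to deduce the statement from the general result of Section~\ref{s:main}, the work being to verify its hypotheses for the linear action $\alpha_0$; these hypotheses amount to the existence, inside $\Gamma$, of a sufficiently rich supply of hyperbolic elements. Since $\Gamma$ is Zariski dense in $\SL_d(\R)$ and the standard representation is irreducible, $\Gamma$ contains an $\R$-regular element $\gamma_0$, that is, one whose eigenvalues are real, positive and of pairwise distinct absolute value; moreover, by the results of Benoist, Goldsheid--Margulis and Prasad--Rapinchuk on Zariski dense subgroups, the eigenlines (and more generally the invariant flags) of such elements of $\Gamma$ form a Zariski dense subset of $\mathbb{P}^{d-1}$. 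Consequently one can select finitely many Anosov elements of $\Gamma$ whose stable and unstable subspaces, together with their one-dimensional fine Lyapunov subspaces, are in general position and jointly span $\R^d$, and such that the resulting dynamics satisfies the non-resonance conditions required by the main theorem. Checking this combinatorial--dynamical statement from Zariski density is the first task.

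Next I would extract the consequences of structural stability. Fix the $\R$-regular element $\gamma_0$ above. As $\alpha_1$ is $C^1$-close to $\alpha_0$, the map $\alpha_1(\gamma_0)$ is Anosov, so by the Anosov--Hirsch--Pugh--Shub theory the conjugacy $\Phi$ is bi-Hölder and is the \emph{unique} continuous conjugacy of $\alpha_1(\gamma_0)$ with $\alpha_0(\gamma_0)$; hence it coincides with the conjugacy obtained for each individual element, and $\Phi\circ\alpha_1(\gamma)=\alpha_0(\gamma)\circ\Phi$ for all $\gamma\in\Gamma$. It follows that $\Phi$ carries the stable and unstable foliations of $\alpha_1(\gamma_0)$ onto the (linear) stable and unstable foliations of $\alpha_0(\gamma_0)$, and, using the non-stationary normal-form and Lyapunov-subfoliation theory (Guysinsky--Katok, Katok--Spatzier, Kalinin--Sadovskaya), it carries the fine one-dimensional Lyapunov subfoliations of $\alpha_1(\gamma_0)$ onto the corresponding linear line foliations. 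These perturbed foliations are Hölder with uniformly $C^\infty$ leaves, and the stable and unstable foliations of $\alpha_1(\gamma_0)$ are transverse with tangent spaces spanning $T\mathbb{T}^d$.

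The core of the argument is then to prove that $\Phi$ is $C^\infty$ along the leaves of $\mathcal{W}^u_{\alpha_1(\gamma_0)}$ and of $\mathcal{W}^s_{\alpha_1(\gamma_0)}$; granting this, Journé's regularity lemma (a map that is uniformly $C^\infty$ along two transverse foliations with $C^\infty$ leaves is $C^\infty$) gives $\Phi\in C^\infty$, and running the same argument for the conjugacy $\Phi^{-1}$ between $\alpha_0$ and $\alpha_1$ shows that $\Phi$ is a $C^\infty$-diffeomorphism. To obtain leafwise smoothness, refine along the fine Lyapunov subfoliations: since $\gamma_0$ is $\R$-regular these are one-dimensional, and on each leaf $\Phi$ restricts to a conjugacy between the smooth perturbed dynamics of $\gamma_0$ and a linear contraction or expansion of $\alpha_0(\gamma_0)$. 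For a single diffeomorphism such a conjugacy is only Hölder in general --- the obstruction being mismatch of the derivative (periodic) data --- so one must exploit that $\Phi$ simultaneously conjugates the entire Zariski dense family: the additional elements permute the fine subfoliations and, through the general-position and non-resonance arrangement of the first step, force the transverse scaling of $\Phi$ to be affine; equivalently, at the common periodic points (dense for $\alpha_0$) the germ of $\alpha_1(\Gamma)$ is topologically conjugate to the linear germ of a Zariski dense group, which pins down the linear data, and the normal-form machinery propagates this along the leaves. Carrying this out is the main obstacle --- especially when $d=2$, where $\Gamma$ has no higher-rank abelian subgroup and one relies genuinely on the non-commuting structure together with Zariski density; this is precisely where the work of Section~\ref{s:main} is concentrated.
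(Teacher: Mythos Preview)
Your first move---reduce Theorem~\ref{th:main1} to the Main Theorem of Section~\ref{s:main}---is exactly what the paper does, and that reduction is short: irreducibility of $D\alpha_0(\Gamma)$ on $\hbox{Lie}(G)=\R^d$ is immediate from Zariski density, and the remaining hypothesis is the existence of a \emph{good pair} $(f,g)$ in the sense of Definition~\ref{d:good}. The paper verifies this in Proposition~\ref{p:torus}: take $f\in\Gamma$ with $Df$ $\R$-regular (so (i)--(ii) hold), and use Zariski density to find $g$ with $P^{min}_f\circ Dg$ nondegenerate on $W^{min}_f$ and such that the commutators $[Df^\ell,Dg]$ avoid the eigenvalue $1$ for all small $\ell$; by Lemmas~\ref{th:bg}--\ref{lem:dense} this gives the density condition (iv). Your first paragraph aims at this but overshoots: the Main Theorem does not ask for finitely many Anosov elements with eigenlines in general position or for non-resonance conditions among their Lyapunov exponents---only for one good pair.

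From the second paragraph on you are no longer applying the Main Theorem but re-deriving it, and by a route that diverges from the paper's. Two differences are worth flagging. First, the paper's engine for leafwise regularity is not a periodic-data / rigidity-of-germs argument: it is the sequence $f^{-n}gf^n$ restricted to the fast stable leaves of $f$. For the algebraic action these maps converge (Proposition~\ref{p:alg}); the hard step (Section~\ref{s3}) is to show $C^1$-precompactness of the perturbed sequence via Liv\v sic-type estimates. The limit maps form a transitive $C^1$-action on each fast stable leaf which $\Phi$ intertwines with the linear one, and this forces $\Phi$ to be $C^1$, then $C^\infty$, along those leaves (Sections~\ref{s4}--\ref{s5}). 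Your proposed mechanism---matching periodic data and letting the Zariski dense group permute fine Lyapunov subfoliations---is a plausible heuristic, but you yourself identify the step as the main obstacle and defer it; the paper fills the gap by a genuinely different idea. Second, to pass from leafwise smoothness to global smoothness the paper does not use Journ\'e's lemma: it uses irreducibility to get $\sum W^{min}_{f_0,g_0}=\hbox{Lie}(G)$ over all good pairs and then invokes elliptic regularity for an operator $\sum_i \partial^{2s}/\partial x_i^{2s}$ built from those directions. Journ\'e would also work once you have smoothness along a pair of transverse foliations, but note that the paper only ever establishes smoothness along fast stable leaves (and their conjugates under $\Gamma$), never separately along stable and unstable foliations of a single element.

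A minor point: your uniqueness claim for the structural-stability conjugacy, and the consequence that $\Phi$ automatically intertwines all of $\Gamma$, is unnecessary---$\Phi$ is assumed from the outset to conjugate the full actions.
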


For $d=2$, E. Cawley   found a $C^{1+\alpha}$-regularity  result for Zariski-dense subgroups of $\SL_2(\mathbb{Z})$ acting on the 2-torus in  \cite{Cawley} in the early 1990's. Her techniques however are restricted to the 2-torus due to the use of $C^{1+\alpha}$-regularity of stable foliations.  Subsequently, the second author
obtained a general $C^{\infty}$-regularity  theorem for groups acting on general  tori in his thesis \cite{Hitchman}.

A second application of our main theorem to  actions on  homogeneous spaces of semisimple groups is novel.

\begin{thm} \label{th:main2}
Let $G$ be a connected simple noncompact Lie group, $\Lambda$ a cocompact lattice in $G$, and $\Gamma$ a 
finitely generated Zariski dense
subgroup of $G$.  Let $\alpha _0$ be the affine action of $\Gamma$ on $G/\Lambda$. If a
$C^\infty$-action $\alpha_1$ is  sufficiently $C^1$-close to
 $\alpha_0$, then any $C^0$-conjugacy $\Phi : G/\Lambda \mapsto G/\Lambda$ is a $C^\infty$-diffeomorphism.
 \end{thm}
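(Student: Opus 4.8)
The plan is to deduce this from the main theorem of Section \ref{s:main} by verifying its hypotheses for the affine action $\alpha_0$ of a Zariski dense $\Gamma \subset G$ on $G/\Lambda$. First I would recall that since $G$ is simple and noncompact, $G/\Lambda$ carries a $G$-invariant probability measure (Haar), and that by the Borel density theorem a Zariski dense $\Gamma$ that preserves this measure acts ergodically, indeed with a spectral gap. The key structural input is that because $\Gamma$ is Zariski dense in the simple group $G$, for the purposes of hyperbolicity one may work ``as if'' the acting group were all of $G$: the Lie algebra $\mathfrak{g}$ decomposes under the adjoint action into root spaces, and Zariski density guarantees that $\Gamma$ contains elements acting with eigenvalues in general position, so that there is no global obstruction to the partial hyperbolicity / accessibility structure that the main theorem requires. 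Concretely, I would exhibit an element (or a finite set of elements) $\gamma \in \Gamma$ which is $\R$-regular semisimple in $G$; the corresponding affine map on $G/\Lambda$ is then a partially hyperbolic translation whose stable, unstable and neutral distributions are smooth, $\Gamma$-equivariant (up to the cocycle coming from the ambient group), and whose coarse Lyapunov / root foliations generate the tangent bundle.

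Next I would check the perturbation-theoretic and dynamical conditions. Since $\alpha_1$ is $C^1$-close to $\alpha_0$ and each $\alpha_0(\gamma)$ for $\gamma$ regular is normally hyperbolic (in the Hirsch--Pugh--Shub sense), structural stability gives continuous stable and unstable foliations for $\alpha_1(\gamma)$, and a topological conjugacy $\Phi$ intertwining them. The point is that $\Phi$ must carry the stable (resp. unstable, resp. neutral) foliation of $\alpha_0$ to that of $\alpha_1$ for \emph{every} regular $\gamma$ simultaneously, and because $\Gamma$ is Zariski dense these foliations, taken together over all such $\gamma$, have the property that their successive intersections are one-dimensional along root directions and they span $T(G/\Lambda)$. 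This is exactly the accessibility/spanning hypothesis feeding the main theorem, which then upgrades the Hölder conjugacy along each one-dimensional foliation to a smooth one, and finally assembles these into a globally $C^\infty$ conjugacy via a Journé-type lemma. The nonresonance conditions needed to rule out the pathological $C^1$-but-not-smooth examples hold because the Lyapunov exponents of $\alpha_0(\gamma)$ are those of $\mathrm{Ad}(\gamma)$, and $\R$-regularity of $\gamma$ makes these multiplicatively independent in the required sense.

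The main obstacle is the step that makes the rank-one case genuinely different from the classical higher-rank local rigidity results: a single $\gamma$, or even a cyclic group, does not supply enough hyperbolic directions, so one cannot run a one-parameter Anosov argument. The resolution, and the crux of invoking the main theorem, is that Zariski density of $\Gamma$ in $G$ forces the \emph{family} of stable/unstable foliations over $\Gamma$ to be rich enough: one must show that finitely many regular elements of $\Gamma$ can be chosen whose associated coarse-Lyapunov decompositions are in ``general position'' relative to the root-space decomposition of $\mathfrak{g}$, so that their common refinement is the full flag of root foliations. This is where finite generation and Zariski density are both used, via a density argument in $G$ together with the (algebraic) fact that regular semisimple elements form a Zariski-open set. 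Once this geometric configuration is in place, the regularity bootstrapping is the content of the main theorem and the remaining work — checking $C^1$-closeness propagates to closeness of the perturbed foliations, and verifying the measure-preservation hypothesis via Borel density — is routine.
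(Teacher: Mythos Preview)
Your proposal correctly identifies that Theorem~\ref{th:main2} is meant to be a corollary of the Main Theorem of Section~\ref{s:main}, and you correctly locate an $\mathbb{R}$-regular element in $\Gamma$ via Zariski density. But from that point on you are verifying the wrong hypotheses. The Main Theorem does not ask for an accessibility or spanning condition on a family of stable/unstable foliations, nor does it use a Journ\'e-type lemma or nonresonance of Lyapunov exponents; its two inputs are (a) irreducibility of $(D\alpha_0)(\Gamma)$ on $\hbox{Lie}(G)$ and (b) existence of a \emph{good pair} $f,g\in\alpha_0(\Gamma)$ in the sense of Definition~\ref{d:good}. Your write-up never mentions the good-pair conditions, and in particular never addresses property~(iii) (that $P^{min}_f\circ Dg:W^{min}_f\to W^{min}_f$ be nondegenerate) or property~(iv) (that every subsequence of $\{f^{-n}gf^n(x)\}$ be dense for a.e.~$x$). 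These are the actual content of the reduction, and they are not automatic.

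The paper's argument (Proposition~\ref{p:semi}) proceeds as follows: take $f$ with $Df$ $\mathbb{R}$-regular, so (i) and (ii) of Definition~\ref{d:good} hold. For (iv), one shows via Lemma~\ref{lem:dense} that it suffices for the pair $\{f^{-1},gf^{-1}g^{-1}\}$ to be mixing on $G/\Lambda$; this mixing is supplied by the Howe--Moore theorem, provided $Dg\cdot (Df)^{-n}\cdot (Dg)^{-1}\cdot (Df)^n$ diverges in $\hbox{Ad}(G)$, which amounts to $Dg$ lying outside a proper parabolic subgroup determined by $Df$. Condition~(iii) is the Zariski-open condition $\det(P^{min}_f(Dg)|_{W^{min}_f})\neq 0$. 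Both constraints on $g$ are Zariski-open and nonempty, so Zariski density of $\Gamma$ furnishes such a $g$. Irreducibility of $(D\alpha_0)(\Gamma)=\hbox{Ad}(\Gamma)$ on $\mathfrak{g}$ follows immediately from simplicity of $G$ and Zariski density. Your Borel-density and ergodicity remarks do not substitute for the Howe--Moore step, and the ``general position of coarse Lyapunov decompositions'' picture you sketch is not the mechanism of the Main Theorem, whose engine is the convergence of $f^{-n}gf^n$ along fast stable leaves (Sections~\ref{s3}--\ref{s5}) rather than a foliation-spanning argument.
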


Let us note that our techniques are based on certain mixing properties of
the actions and do not allow the treatment of actions on general nilmanifolds.

  Fisher and Hitchman recently proved a local rigidity theorem for   actions of  lattices with the
  Kazhdan property \cite{Fisher-Hitchman2006}.  We recall that an action $\alpha$ is called
  $C^{k,l}$-rigid if any $C^k$-close perturbation of the action is $C^l$-conjugate to $\alpha$.

 \begin{thm} [Fisher-Hitchman]
 Let $\Gamma$ be a lattice in a semisimple Lie group without compact factors  which satisfies Kazhdan's property.  Then any affine action $\alpha$ of $\Gamma$  is $C^{3,0}$-locally rigid.
 \end{thm}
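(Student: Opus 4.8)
The plan is to treat this as a local rigidity problem in the classical style --- reduce to showing that a certain cocycle is a coboundary, solve the linearized equation, and iterate --- but with Kazhdan's property, rather than any rank or hyperbolicity assumption, supplying the decisive estimate. The novel point, due to Fisher and Hitchman, is that property (T) can be made to play the role that curvature positivity plays in classical Bochner vanishing theorems.

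\emph{Step 1: reduction to a coboundary.} Write the homogeneous space as $M=H/\Lambda$ and suppose $\alpha_1$ is $C^3$-close to $\alpha=\alpha_0$. Put $c(\gamma):=\alpha_0(\gamma)^{-1}\circ\alpha_1(\gamma)$; these are $C^2$-small diffeomorphisms satisfying a cocycle identity for the $\Gamma$-action on $\Diff(M)$ by conjugation through $\alpha_0$. A topological conjugacy $\Phi$ with $\Phi\circ\alpha_0(\gamma)=\alpha_1(\gamma)\circ\Phi$ exists precisely when $c$ is a coboundary for this action, i.e. $\alpha_1(\gamma)=h^{-1}\circ\alpha_0(\gamma)\circ h$ for a single near-identity $h$, and one needs the primitive $h$ only continuous. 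So it suffices to show: every $C^3$-small such cocycle is a coboundary with continuous primitive.

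\emph{Step 2: the infinitesimal problem and property (T).} Linearizing the coboundary equation at the trivial solution yields the ordinary group coboundary operator from vector fields on $M$ into $1$-cocycles valued in vector fields, with $\Gamma$ acting through $\alpha_0$. For an isometric $\alpha_0$ this is a unitary $\Gamma$-module, and property (T) gives $H^1=0$ together with the quantitative spectral-gap estimate $\|v\|\le C\max_i\|v-\alpha_0(\gamma_i)_{*}v\|$ over a fixed generating set, hence a bounded right inverse; for a merely affine, non-isometric $\alpha_0$ the module of vector fields is \emph{not} unitary --- this is exactly why the plain Delorme--Guichardet fixed-point theorem does not suffice --- and one instead invokes the Fisher--Hitchman ``Bochner'' strengthening: realize a near-trivial cocycle as the data for a $\Gamma$-equivariant harmonic-map-type problem with an infinite-dimensional target built from sections over $M$, and use the spectral gap coming from property (T), in place of a curvature hypothesis, to run a Bochner/gradient-flow argument forcing the equivariant object --- hence the cocycle --- to be trivialized. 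The essential feature, in contrast to small-divisor problems, is that this estimate is \emph{tame}: it loses no derivatives.

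\emph{Step 3: from infinitesimal to actual, and the main obstacle.} Since the linearized equation is solvable with tame estimates, one runs a Nash--Moser iteration: at stage $n$ solve the linearized cohomological equation for a correction $\xi_n$, set $h_n=\exp(\xi_n)$, replace $\alpha_1$ by $h_n^{-1}\circ\alpha_1\circ h_n$, and verify that the new deformation cocycle is quadratically smaller up to a bounded loss of derivatives arising purely from the nonlinearity of composition in $\Diff(M)$; interpolation with smoothing operators absorbs the loss, and the product of all the $h_n$ converges to the conjugacy. The numerology in the statement --- needing $C^3$ closeness and obtaining only a $C^0$ conjugacy --- reflects the Sobolev thresholds at which $\Diff(M)$ admits the tame inverse-function-theorem estimates and at which the limiting section is guaranteed merely continuous; one expects to bootstrap to higher regularity with more work. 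The genuine crux is Step 2: property (T) by itself only produces a fixed point, or a trivializing primitive, inside a Hilbert-space completion, where the relevant maps are merely measurable and there is no reason for the primitive to be invertible, let alone continuous. Converting the Kazhdan estimate into a solution living in a space of sections good enough to yield a homeomorphism of $M$ is the substantive work: it requires the full Bochner-type extension of property (T) to isometric actions on infinite-dimensional section spaces, a quantitative handle on the nonlinearity of composition, and --- in the heat-flow incarnation --- an argument ruling out finite-time blow-up of the flow despite the absence of any convexity or curvature control on the compact fibers $M$, which is available only because one begins $C^1$-close to an honest invariant section. The cocycle reformulation of Step 1 and the elliptic bootstrapping are, by comparison, routine.
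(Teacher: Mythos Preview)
The paper does not prove this theorem: it is quoted as a result of Fisher and Hitchman from the cited reference \cite{Fisher-Hitchman2006} and used as a black box to obtain the corollary that follows. There is therefore no proof in the paper to compare your attempt against. The paper's only comment on the method is the single sentence ``Their technique is based on a type of heat flow,'' together with the title of the cited paper (cocycle superrigidity via harmonic maps with infinite-dimensional targets), and your sketch is broadly consistent with that description.

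As an independent remark on your outline: the actual Fisher--Hitchman argument is not a Nash--Moser iteration built on a linearized cohomological equation. Rather, they recast the conjugacy problem directly as the search for an equivariant section, view this as a harmonic-map problem with target an infinite-dimensional nonpositively-curved space of maps, and use the Kazhdan spectral gap as a substitute for target curvature in a Bochner-type argument for the associated heat flow; convergence of the flow produces the conjugacy in one stroke. Your Step~2 correctly identifies this mechanism, but Step~3 grafts on a KAM/Nash--Moser scheme that is not what they do and is not needed --- the heat-flow argument already handles the full nonlinear problem. The $C^{3,0}$ numerology comes from Sobolev embedding thresholds in the heat-flow estimates, not from derivative loss in an iteration.
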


 Fisher and Hitchman actually prove this for \emph{quasi-affine} actions, which are extensions of affine actions by isometries.
 Their technique is based on a type of heat flow.
If $\alpha$ does  not admit a common neutral direction, then Fisher and Hitchman's proof yields $C^{1,0}$-local rigidity.
Using our regularity result, we immediately obtain

 \begin{cor} 
  Let $G$ be a simple noncompact Lie group  which satisfies Kazhdan's property, $\Gamma$ a
  lattice in $G$, and $X$ a compact homogeneous space of $G$ supporting an invariant measure.  
Then the affine action of $\Gamma$ on $X$ is $C^{1, \infty}$-locally rigid.
\end{cor}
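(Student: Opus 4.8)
The claim is a combination of the Fisher--Hitchman theorem and Theorem~\ref{th:main2}, so the plan is to check that the hypotheses of both apply to $\alpha_0$ and then to chain the two conclusions. First I would record the standard reductions: a compact homogeneous space $X$ of $G$ carrying a $G$--invariant probability measure is of the form $X=G/\Lambda$ with $\Lambda$ a cocompact lattice (the degenerate case in which $X$ is a single point being trivial), and $\alpha_0$ is then the restriction to $\Gamma$ of the left--translation action of $G$ on $G/\Lambda$. Moreover $\Gamma$ is finitely generated, being a lattice in a Kazhdan group, and Zariski dense in $G$ by the Borel density theorem. Thus $(G,\Lambda,\Gamma,\alpha_0)$ satisfies the hypotheses of Theorem~\ref{th:main2}, and $\Gamma$ satisfies those of the Fisher--Hitchman theorem.

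The one point that genuinely needs an argument is that $\alpha_0$ has no common neutral direction, so that the strengthened form of the Fisher--Hitchman theorem recalled above yields $C^{1,0}$--local rigidity. Trivializing $T(G/\Lambda)$ by the $G$--action identifies the derivative cocycle of $\alpha_0$ with the constant cocycle $\gamma\mapsto\mathrm{Ad}(\gamma)$ on $\mathfrak{g}$. A common neutral direction would in particular supply a nonzero continuous (or measurable) $\Gamma$--invariant subbundle $N\subseteq TX$, equivalently a $\Gamma$--equivariant map $G/\Lambda\to\mathrm{Gr}_k(\mathfrak{g})$, where $k$ is the rank of $N$ and the target carries the $\mathrm{Ad}$--action. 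Pushing the $G$--invariant measure forward gives an $\mathrm{Ad}(\Gamma)$--invariant probability measure on $\mathrm{Gr}_k(\mathfrak{g})$; since $\mathrm{Ad}(\Gamma)$ is Zariski dense and $G$ has no compact factors, this measure is $\mathrm{Ad}(G)$--invariant and supported on $\mathrm{Ad}(G)$--fixed points, i.e.\ on $\mathrm{Ad}(G)$--invariant $k$--planes in $\mathfrak{g}$. For $0<k<\dim\mathfrak{g}$ there are none, because $\mathfrak{g}$ is simple; and $k=\dim\mathfrak{g}$ is incompatible with neutrality, since a neutral $N=TX$ would make $\mathrm{Ad}(\Gamma)$ relatively compact (the transfer map is bounded, as $X$ is compact), forcing $\mathrm{Ad}(G)$ to be relatively compact and contradicting the noncompactness of $G$. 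So $\alpha_0$ has no common neutral direction.

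It then remains to chain the two inputs. Given a $C^\infty$ action $\alpha_1$ of $\Gamma$ on $X$ sufficiently $C^1$--close to $\alpha_0$, the Fisher--Hitchman theorem produces a $C^0$--conjugacy $\Phi$ between $\alpha_0$ and $\alpha_1$; applying Theorem~\ref{th:main2} to the data checked in the first paragraph, $\Phi$ is in fact a $C^\infty$--diffeomorphism. Hence every sufficiently $C^1$--small $C^\infty$--perturbation of $\alpha_0$ is $C^\infty$--conjugate to $\alpha_0$; that is, $\alpha_0$ is $C^{1,\infty}$--locally rigid.

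There is no new dynamics in this argument --- it merely assembles two established results --- so the main place that calls for care is the no--common--neutral--direction check above (and, preceding it, the reduction that places $X$ within the scope of Theorem~\ref{th:main2}); everything else is formal.
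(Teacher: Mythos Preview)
Your argument is correct and follows the same route the paper takes: the paper simply says ``using our regularity result, we immediately obtain'' the corollary, having noted just before that Fisher--Hitchman gives $C^{1,0}$-local rigidity once the action has no common neutral direction. You have filled in the details the paper leaves to the reader---the reduction of $X$ to $G/\Lambda$ with $\Lambda$ a cocompact lattice, finite generation and Zariski density of $\Gamma$, and the verification that no common neutral direction exists---and then chained Fisher--Hitchman with Theorem~\ref{th:main2} exactly as intended.

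One small remark: your subbundle/pushforward-measure argument for the absence of a common neutral direction is more elaborate than necessary here, since the derivative cocycle is constant (equal to $\mathrm{Ad}$); the common neutral subspace $\bigcap_{\gamma}E^0_{\mathrm{Ad}(\gamma)}\subset\mathfrak g$ is automatically $\Gamma$-invariant (conjugation permutes the $E^0$'s), hence $0$ or $\mathfrak g$ by irreducibility, and the latter is excluded since a Zariski-dense $\Gamma$ in a noncompact simple $G$ contains elements whose adjoint has eigenvalues off the unit circle. Your argument is not wrong, just more general than the situation requires.
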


\begin{remark}{\rm
We can also deduce $C^{1,\infty}$-local rigidity for the action of a Kazhdan lattice $\Gamma$, embedded in
$\hbox{SL}_d(\mathbb{Z})$, on the torus $\mathbb{T}^d$ under the assumption that $\Gamma\times \Gamma$
is not contained in the subvarieties $\det([X^\ell,Y]-id)=0$, $\ell\ge 1$, $\phi(\ell)\le d^2$,
where $\phi$ is the Euler totient function (see Lemma \ref{th:bg}).
This assumption is needed to construct good pairs in $\Gamma$ (see Definition \ref{d:good}).
}
\end{remark}

Fisher and Hitchman proved $C^{\infty,\infty}$-local rigidity for a more general class of actions of cocompact lattices in the same groups \cite{Fisher-Hitchman2006}.   In particular their approach works on nilmanifolds.

At the heart of our argument lies the investigation of sequences of the form $\gamma ^{-n} \delta \gamma
^n$ for two hyperbolic elements $\gamma$ and $\delta$ in ``general position''.  Such elements always
exist in Zariski-dense groups.
The behavior of these sequences is badly divergent in directions transverse to the fast stable direction
of $\gamma$, and cannot be controlled.
However, these sequences do converge along the fast stable manifolds of $\gamma$.  This is elementary for
an affine action.  We prove $C^1$-convergence for the perturbed action.  These limiting maps along the
fast stable foliation of $\gamma$
form a rich system which acts transitively along the fast stable leaves under suitable conditions.
Moreover,  the conjugacy $\Phi$ between the actions will also intertwine these limiting maps along fast
stables.  It follows
that $\Phi$ has to be $C^1$ along each of these fast stable manifolds.  We prove smoothness in a separate
argument.

The proof of $C^1$-convergence is technically the most difficult piece of the argument. 
It requires careful estimates which are an adaptation of the proof of Livsic' theorem 
for cocycles with non-abelian targets.

The use of sequences of the form  $\gamma ^{-n} \delta \gamma ^n$ was introduced by Hitchman in his thesis
 \cite{Hitchman}.  His argument relied on the idea that the resulting limit maps 
along fast stable leaves often exhibit higher  rank abelian behavior which could then be used
to prove regularity similar to the case of actions by higher rank abelian groups.

  Let us comment that our arguments seem to be of rather general nature.  In the weakly hyperbolic
  setting,  the hard part  in proving local rigidity results lies in getting a $C^0$-conjugacy.
 Indeed, the common strategy for most of the known local rigidity results has been to show existence of a
 $C^0$-conjugacy and then improve the regularity.  Margulis--Qian in higher rank  and Fisher-Hitchman for
 all Kazhdan Lie groups have the most extensive   results \cite{MQ, Fisher-Hitchman}.
 The current paper shows regularity under rather general conditions, reducing smooth local rigidity to
 continuous local rigidity.  To pinpoint precisely when local rigidity holds appears difficult.  On the
 one hand, we have the results above for actions of lattices in the Kazhdan rank one groups.  On the
 other hand, Fisher found non-trivial affine deformations of actions  of lattices in $SO(n,1)$ resulting
 from ``bending lattices'' \cite{fisher-priv,Fisher-def}.  Finally, if the action has isometric
 directions, even  regularity becomes difficult as evidenced even in higher rank by the works of Fisher
 and Margulis \cite{Fisher-Margulis} and Fisher and Hitchman \cite{Fisher-Hitchman2006}.

\subsection{Acknowledgement}
We would like to thank  R.~Feres, A.~Gogolev, J.~Heinonen, B.~Kalinin, and B.~Schmidt for useful
discussions. We also would like to thank a referee for careful reading and
for pointing out some  deficiencies in our original proofs.
A.G. would like to express his thanks for hospitality to Princeton University, where 
part of this work was completed.

\section{Main result}\label{s:main}

Let $G$ be a connected Lie group, $\Lambda$ a cocompact lattice in $G$, and $X=G/\Lambda$.
The space $X$ is equipped with a finite invariant Radon measure.
The group $\Aff(X)$ of affine transformations of $X$ consists of maps of the form
$$
f:x\mapsto L_g\circ a(x),\quad x\in X,
$$
where $L_g$ denotes the left mutiplication action of $g\in G$ and $a$ is an automorphism of $G$ preserving $\Lambda$.
Every such map $f$ preserves the measure and
defines an automorphism $Df$ of $\hbox{Lie}(G)\simeq T_{e\Lambda}(X)$
given by
$$
Df:=\hbox{Ad}(g)\circ D(a)_e.
$$
We denote by $W^{min}_f$ \label{eq:wm}  the sum of the generalized eigenspaces of $Df$ with
eigenvalues of minimal modulus
and by $P^{min}_f:\hbox{Lie}(G)\to W^{min}_f$ the projection map along the other generalized eigenspaces. 

\begin{defin}\label{d:good}
We call a pair $f,g\in \Aff(X)$ {\em good} if
the following conditions are satisfied:
\begin{enumerate}
\item[(i)] The map $Df$ is partially hyperbolic.
\item[(ii)] The map $Df :W^{min}_f\to W^{min}_f$ is semisimple.
\item[(iii)] The map $P^{min}_f \circ Dg:W^{min}_f\to W^{min}_f$ is nondegenerate.
\item[(iv)] For every subsequence $\{n_i\}$ , the sequence $\{f^{-n_i}gf^{n_i}(x)\}$ is
  dense in $X$ for $x$ in a set of full measure.
\end{enumerate}
If for $f\in \Aff(X)$, there exists $g\in \Aff(X)$ so that the pair $f,g$ is good, we say $f$ is a {\em good} mapping.
\end{defin}

\begin{remark} {\rm
In the case when the map $Df :W^{min}_f\to W^{min}_f$ does not have
a rotation component of infinite order (e.g., when $\dim W^{min}_f=1$), it suffices to
assume that the sequence $\{f^{-n}gf^{n}(x)\}$ is
  dense in $X$ for $x$ in a set of full measure.
In general, we have to pass to a subsequence to guarantee that
the maps $f^{-n}gf^n$ converge along the fast stable leaves as $n\to\infty$
(see Proposition \ref{p:alg}).
}
\end{remark}

The theorems stated in the introduction will be deduced from
the following general result:

\vspace{0.3cm}\noindent{\bf Main Theorem.} {\it
Let $\Gamma$ be a finitely generated discrete group and $\alpha_0:\Gamma\to\Aff(X)$ an affine action of $\Gamma$
such that 
\begin{itemize}
\item $(D\alpha_0)(\Gamma)$ acts irreducibly on $\hbox{\rm Lie}(G)$,
\item $\alpha_0(\Gamma)$ contains a good pair.
\end{itemize}
Let $\alpha_1:\Gamma\to\Diff(X)$ be a $C^\infty$-action of $\Gamma$ 
which is sufficiently $C^1$-close to $\alpha_0$.
Then every homeomorphism $\Phi:X\to X$ satisfying
$$
\Phi\circ \alpha_0(\gamma)=\alpha_1(\gamma)\circ \Phi\quad\hbox{for all $\gamma\in\Gamma$}
$$
is a $C^\infty$-diffeomorphism.
}
\vspace{0.3cm}

\begin{remark} \label{rem}{\rm
Irreducibility of the action of $\Gamma$ on $\hbox{Lie}(G)$ is used in the following places:
\begin{itemize}
\item In Section \ref{s1}, to deduce weak hyperbolicity (see (\ref{eq:wh})),
\item In Section \ref{s2}, to construct essential sets (see Lemma \ref{l:ess}),
\item In Section \ref{s5}, to deduce that $\Phi$ is $C^\infty$ from smoothness on subspaces of
the fast stable leaves
(see (\ref{eq:irrr})).
\end{itemize}
}\end{remark}


Existence of good pairs for some classes of affine actions will be proved in Section
\ref{sec:gen}. In particular, Theorem \ref{th:main1} follows from the Main Theorem
and Proposition \ref{p:torus}, and Theorem \ref{th:main2}  follows from the Main
Theorem and Proposition \ref{p:semi}.

\begin{proof}[Outline of the proof of the Main Theorem]
Irreducibility of $\Gamma$-action and property (i)  of a good pair are used to prove that $\Phi$ is bi-H\"older (Section \ref{s1}).
Next, irreducibility of the $\Gamma$-action and property (ii) of a good pair are used to show that 
$\Phi$ maps fast stable manifolds to fast stable manifolds (Section \ref{s2}).
Property (ii) is also used to show that a subsequence of maps $f^{-n}gf^n$
restricted to fast stable manifolds is precompact in the $C^0$-topology and, in fact, in the $C^1$-topology (Section \ref{s3}).
Then one utilizes property (iii)  of a good pair to deduce that the limits
of these maps are homeomorphisms and property (iv)  of a good pair to deduce that these limits generate transitive
$C^1$-action on fast stable manifolds. Using that $\Phi$ is a conjugacy between the constructed
$C^1$-actions, we show that $\Phi$ is $C^1$ along the fast stable leaves (Section \ref{s4}).
A more elaborate argument, which is based on the nonstationary Sternberg linearization
\cite{kl,GK,Guysinsky},  shows that $\Phi$ is $C^\infty$ along some subspaces of fast stable leaves.
Finally, we deduce that $\Phi$ is $C^\infty$ on $X$ from elliptic regularity
using  irreducibility of the $\Gamma$-action (Section \ref{s5}).
\end{proof}

\section{Proof of the Main Theorem}

We continue with the notation that $X=G/\Lambda$ is a compact quotient of a connected Lie group $G$ by a discrete subgroup $\Lambda \subset G$.

\subsection{$C^0$ implies H\"older}\label{s1}

In this section, we will prove that the conjugacy map $\Phi: X \to X$ in the Main Theorem
is bi-H\"{o}lder.  The proof is
similar to Proposition 5.7 of \cite{MQ}.  As they do not show that their map is H\"{o}lder, and also use
somewhat different
hypotheses, we will give a proof here for simplicity.

  Following \cite{MQ}, we say that a $C^1$-action $\alpha$ of a discrete group $\Gamma$ on a compact
  manifold $M$ is \emph{weakly hyperbolic} when there is a choice of finitely many elements
  $\gamma_1,\ldots,\gamma_k$ in $\Gamma$ such that each diffeomorphism $\alpha(\gamma_i)$ is partially
  hyperbolic and, for each point $x \in M$,
  \begin{equation}\label{eq:wh}
  \sum_{i=1}^{k} T_x W^s_{\alpha(\gamma_i)}(x) = T_x M,
  \end{equation}
  where $W^s_{\alpha(\gamma_i)}(x)$ denotes the stable manifold of $\alpha(\gamma_i)$ through $x$.

\begin{thm}\label{th:H}
Let $\Gamma$ be a finitely generated discrete group, $\alpha_0:\Gamma\to\Aff(X)$ be an affine 
weakly hyperbolic action, and $\alpha_1:\Gamma\to\Diff^1(X)$ 
a smooth action which is sufficiently $C^1$-close to $\alpha_0$.
Then every homeomorphism $\Phi:X\to X$ such that
$$
\Phi\circ \alpha_0(\gamma)=\alpha_1(\gamma)\circ \Phi\quad\hbox{for all $\gamma\in\Gamma$}
$$
is bi-H\"older.
 \end{thm}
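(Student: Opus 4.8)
The plan is to establish bi-Hölder regularity of $\Phi$ by showing that $\Phi$ is Hölder along each stable foliation $W^s_{\alpha_1(\gamma_i)}$ of the perturbed action, and then gluing these estimates together using the weak hyperbolicity condition (\ref{eq:wh}). The first step is to fix a finite set $\gamma_1,\ldots,\gamma_k\in\Gamma$ witnessing weak hyperbolicity for $\alpha_0$; since partial hyperbolicity is a $C^1$-open condition and the splittings vary continuously, for $\alpha_1$ sufficiently $C^1$-close to $\alpha_0$ each $\alpha_1(\gamma_i)$ is still partially hyperbolic, its stable distribution $E^s_{\alpha_1(\gamma_i)}$ is $C^0$-close to $E^s_{\alpha_0(\gamma_i)}$, and the transversality in (\ref{eq:wh}) persists for $\alpha_1$ as well. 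We also retain uniform contraction rates: there is $\mu<1$ and $C>0$ so that $\|D\alpha_j(\gamma_i)^n|_{E^s}\|\le C\mu^n$ for both $j=0,1$ and all $i$.

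Next I would run the standard Anosov–Hirsch–Pugh–Shub type argument adapted to a single element $\gamma=\gamma_i$ to show that $\Phi$ maps $W^s_{\alpha_0(\gamma)}$-leaves into $W^s_{\alpha_1(\gamma)}$-leaves and is Hölder along them. The point is that $\Phi$ intertwines $\alpha_0(\gamma)$ and $\alpha_1(\gamma)$, so if $y\in W^s_{\alpha_0(\gamma)}(x)$ then $\alpha_0(\gamma)^n x$ and $\alpha_0(\gamma)^n y$ converge together, hence so do their $\Phi$-images, forcing $\Phi(y)\in W^s_{\alpha_1(\gamma)}(\Phi(x))$. For the Hölder estimate along the leaf, one compares the contraction rate $\mu$ on $E^s$ with the maximal expansion rate $\nu$ of the whole system (a uniform bound $\|D\alpha_j(\gamma)^{\pm 1}\|\le \nu$): given two nearby points $x,y$ on a common $\alpha_0(\gamma)$-stable leaf at distance $r$, push forward by $\alpha_0(\gamma)^n$ until the distance reaches a fixed size $r_0$, which takes $n\approx \log(r_0/r)/\log(1/\mu)$ steps; since $\Phi$ is uniformly continuous and $\alpha_1(\gamma)^{-n}$ expands by at most $\nu^n$, one gets $d(\Phi x,\Phi y)\le \nu^n \cdot\omega(r_0)$, where $\omega$ is the modulus of continuity of $\Phi$. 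This yields $d(\Phi x,\Phi y)\le C' r^{\beta}$ with $\beta=\log\nu/\log(1/\mu)>0$ along $\alpha_0(\gamma)$-stable leaves, uniformly over leaves by compactness.

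The third step is to upgrade "Hölder along each of the $k$ stable foliations" to "Hölder on $X$". Here I would use that the distributions $E^s_{\alpha_0(\gamma_1)},\ldots,E^s_{\alpha_0(\gamma_k)}$ span $T_xX$ at every point, uniformly, so locally one can join any two nearby points $p,q$ by a chain of bounded length $p=z_0,z_1,\ldots,z_m=q$ in which each segment $z_{j-1}z_j$ lies in a single $\alpha_0(\gamma_{i_j})$-stable leaf and has length $\lesssim d(p,q)$; applying the leafwise Hölder bound to each of the $\le m$ (bounded number of) segments and summing gives $d(\Phi p,\Phi q)\lesssim d(p,q)^{\beta'}$ for $\beta'=\min_i\beta_i$. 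This "Hölder cocycle concatenation" step is routine once the uniform spanning/chaining lemma is in place, but it is the one that genuinely needs (\ref{eq:wh}) rather than just partial hyperbolicity of the individual $\gamma_i$. Finally, the same argument applied to $\Phi^{-1}$, which conjugates $\alpha_1$ back to $\alpha_0$ and for which the analogous weak hyperbolicity holds (the unstable foliations, equivalently the stable foliations of the $\gamma_i^{-1}$, also span), shows $\Phi^{-1}$ is Hölder, so $\Phi$ is bi-Hölder.

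The main obstacle I anticipate is making the leafwise Hölder estimate genuinely uniform and quantitative in the partially hyperbolic (rather than Anosov) setting: the stable manifolds $W^s_{\alpha_1(\gamma_i)}$ are only as regular as the stable-manifold theorem provides, one must control distances measured along these (possibly only continuous) leaves versus ambient distances, and the comparison of contraction and expansion rates has to be done with the genuine rates on $E^s$ versus the full derivative norm, all while tracking that the constants survive the $C^1$-perturbation. Keeping these estimates uniform over $X$ and over the finitely many generators — so that the chaining argument closes — is where the real care is needed; the structure of the argument otherwise follows \cite{MQ} and \cite{HPS} closely.
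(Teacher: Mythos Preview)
Your outline is exactly the paper's: persistence of weak hyperbolicity under $C^1$-perturbation (Lemma~\ref{l:wh}), H\"older continuity of $\Phi$ along each stable leaf (Lemma~\ref{l:holder}, where the paper simply cites \cite[Theorem~19.1.2]{Katok-Hasselblatt}), a chaining lemma connecting nearby points by short segments in the stable leaves (Lemma~\ref{l:sch}), and then the symmetric argument for $\Phi^{-1}$.

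Two points to fix. First, your leafwise H\"older computation is written backwards and fails as stated: on an $\alpha_0(\gamma)$-stable leaf forward iteration \emph{contracts}, so the distance can never grow from $r$ to a fixed $r_0>r$, and the resulting bound $d(\Phi x,\Phi y)\le \nu^n\omega(r_0)$ with $\nu>1$ and $n\sim\log(r_0/r)$ blows up as $r\to 0$ rather than giving $r^\beta$. The standard argument iterates \emph{backward} by $\alpha_0(\gamma)^{-n}$ until the points separate to scale $r_0$ (backward expansion on the stable leaf is at least $\mu^{-1}$ and at most the Lipschitz constant $L$ of $\alpha_0(\gamma)^{-1}$, so $n\ge\log(r_0/r)/\log L$), and then uses $\Phi=\alpha_1(\gamma)^{n}\Phi\,\alpha_0(\gamma)^{-n}$ together with the fact that $\alpha_1(\gamma)^{n}$ \emph{contracts} on its stable leaf by $(\mu')^n$; this gives H\"older exponent $\log(1/\mu')/\log L$, the reciprocal of what you wrote. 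Second, for $\Phi^{-1}$ you do not need the unstable foliations to span (this is neither assumed nor a consequence of weak hyperbolicity); what is needed is weak hyperbolicity of $\alpha_1$ with the \emph{same} elements $\gamma_1,\dots,\gamma_k$, and you already established that by persistence in your first step.
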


The proof is divided into several lemmas.

\begin{lemma} \label{l:wh}
Let $f_1,\ldots,f_k$ be partially hyperbolic diffeomorphisms of $X$ such that
$$
\sum_{i=1}^{k} T_x W^s_{f_i}(x) = T_x M\quad\hbox{for all $x\in X$,}
$$
and $g_1,\ldots,g_k$ are $C^1$-close $C^1$-diffeomorphisms. Then
$g_i$'s are partially hyperbolic and
$$
\sum_{i=1}^{k} T_x W^s_{g_i}(x) = T_x M\quad\hbox{for all $x\in X$}.
$$
\end{lemma}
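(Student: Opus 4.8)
The plan is to argue that each of the two conclusions — partial hyperbolicity of the $g_i$, and the spanning condition $\sum_i T_x W^s_{g_i}(x) = T_xX$ — is stable under $C^1$-perturbation, using the structural stability package of Hirsch–Pugh–Shub \cite{HPS}. First I would recall that a partially hyperbolic diffeomorphism $f$ admits a $Df$-invariant continuous splitting $TX = E^s_f\oplus E^c_f\oplus E^u_f$ with the usual domination inequalities, and that this splitting together with the rate constants varies continuously (in fact, the cone conditions defining it are open) under $C^1$-perturbation. Hence for $g_i$ sufficiently $C^1$-close to $f_i$, the diffeomorphism $g_i$ is again partially hyperbolic with a splitting $TX = E^s_{g_i}\oplus E^c_{g_i}\oplus E^u_{g_i}$, and moreover the stable distribution $E^s_{g_i}(x)$ is $C^0$-close to $E^s_{f_i}(x)$ uniformly in $x$ — this is precisely the content of the persistence of dominated splittings. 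By the stable manifold theorem for partially hyperbolic systems, $T_xW^s_{g_i}(x) = E^s_{g_i}(x)$, and likewise $T_xW^s_{f_i}(x) = E^s_{f_i}(x)$.

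The second step is to upgrade the pointwise spanning condition $\sum_i E^s_{f_i}(x) = T_xX$ to the perturbed distributions. Here the key point is a compactness/openness argument: the condition that a finite collection of linear subspaces $(V_1,\dots,V_k)$ of a fixed vector space spans is an open condition in the product of Grassmannians, and it holds at every point of the \emph{compact} manifold $X$ for the collection $(E^s_{f_1}(x),\dots,E^s_{f_k}(x))$. Since $x\mapsto (E^s_{f_i}(x))_i$ is continuous and $X$ is compact, there is a uniform ``margin'' — concretely, one can fix a continuous Riemannian metric and observe that the function assigning to $x$ the smallest singular value of the natural surjection $\bigoplus_i E^s_{f_i}(x)\to T_xX$ (equivalently, a lower bound on how far the tuple is from a non-spanning configuration) is continuous and hence bounded below by some $\varepsilon_0 > 0$ on $X$. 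Because $E^s_{g_i}(x)$ is within $\varepsilon_0/2$ of $E^s_{f_i}(x)$ in the Grassmannian for all $x$ once $g_i$ is close enough to $f_i$, the perturbed tuple still spans at every $x$, giving $\sum_i T_xW^s_{g_i}(x) = T_xX$.

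The main obstacle — really the only nontrivial input — is the uniform $C^0$-closeness of the stable distributions $E^s_{g_i}$ to $E^s_{f_i}$; everything else is elementary linear algebra plus compactness of $X$. This closeness is standard (it follows from the graph-transform construction of the invariant distributions and the fact that the relevant contraction on the space of candidate distributions depends continuously on the diffeomorphism in the $C^1$-topology), so I would simply cite \cite{HPS} for it rather than reprove it. One mild point to be careful about: ``partially hyperbolic'' here should be understood in the weak (non-absolute) sense, so the domination inequalities are the ones to be perturbed, and the stable bundle $E^s_{f_i}$ is the one tangent to the fast stable foliation as used elsewhere in the paper; with that reading the argument goes through verbatim.
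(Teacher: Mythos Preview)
Your argument is correct and follows essentially the same route as the paper: the paper simply records that the lemma follows from stability of partial hyperbolicity under $C^1$-perturbation, citing \cite[Lemma~3.5]{pe}, and leaves the (easy) passage from $C^0$-closeness of the stable distributions to persistence of the spanning condition implicit. Your write-up fills in exactly those details---the openness of the spanning condition in the Grassmannian plus compactness of $X$---and cites \cite{HPS} rather than \cite{pe}, which is an equally valid reference for the same structural stability input.
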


Lemma \ref{l:wh} follows from stability of partial hyperbolicity under perturbations
(see, for example, \cite[Lemma~3.5]{pe}).

\ignore{
\begin{proof}  Each $\alpha_0 (\gamma _i)$ acts normally hyperbolically w.r.t. the foliation $ N_i$.  By
  Theorem 6.1 and 6.8 of \cite{HPS}, any perturbation $\alpha _1 (\gamma _i)$ close enough in the
  $C^1$-topology is normally hyperbolic to a foliation $N _i '$.  Furthermore, the stable and unstable
  distributions  of $\alpha _1 (\gamma _i)$ are $C^0$-close to those of  $\alpha_0 (\gamma _i)$ It
  follows easily that $\alpha _1 (\Gamma)$ is still weakly  hyperbolic. Note that the elements
  $\gamma_1,\ldots,\gamma_k$
satisfy the definition of weak hyperbolicity for both actions.
   \end{proof}
}

\begin{lemma} \label{l:holder}
  Let $\Phi$ be a continuous conjugacy between two partially hyperbolic diffeomorphisms of a compact manifold. Then $\Phi$ is bi-H\"{o}lder continuous along the stable manifolds of these mappings.
\end{lemma}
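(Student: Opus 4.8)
**Proof proposal for Lemma 1.6 (Hölder continuity along stable manifolds of a conjugacy between partially hyperbolic diffeos).**

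The plan is to reduce the statement to the classical fact that a topological conjugacy between uniformly contracting maps is bi-Hölder, applied leaf by leaf. Let $f_0, f_1$ be the two partially hyperbolic diffeomorphisms of the compact manifold $M$ and $\Phi$ a homeomorphism with $\Phi \circ f_0 = f_1 \circ \Phi$. First I would record the structural input: by partial hyperbolicity each $f_j$ has a continuous stable distribution $E^s_{f_j}$ integrating to a continuous foliation $W^s_{f_j}$ with uniformly $C^1$ leaves, and there are constants $C \geq 1$, $0 < \mu < 1$ such that $\d(f_j^n x, f_j^n y) \leq C\mu^n \d(x,y)$ whenever $y \in W^s_{f_j}(x)$, distance measured along the leaf (for $n \geq 0$). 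The conjugacy relation immediately gives $\Phi(W^s_{f_0}(x)) = W^s_{f_1}(\Phi x)$ for every $x$, since $\Phi$ carries $f_0$-orbits that contract to $f_1$-orbits that contract; this is the first step, and it is where one uses that a point $y$ lies on the stable leaf of $x$ iff $\d(f_0^n x, f_0^n y)\to 0$, a dynamical (hence conjugacy-invariant) characterization.

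The core estimate is the following telescoping argument, carried out on a fixed stable leaf of $f_0$. Fix $x$ and $y \in W^s_{f_0}(x)$ with small leafwise distance $r = \d^s_{f_0}(x,y)$. Choose $N \geq 0$ to be the largest integer with $C\mu^N r \leq \epsilon_0$, where $\epsilon_0$ is a uniform ``local size'' below which $\Phi$ and $\Phi^{-1}$ have a uniform modulus of continuity (this exists by uniform continuity of $\Phi$ on the compact $M$) and below which leafwise distance for $f_1$ is comparable to ambient distance. Then $\d^s_{f_0}(f_0^N x, f_0^N y) \leq C\mu^N r \leq \epsilon_0$, so
$$
\d^s_{f_1}\bigl(f_1^N\Phi x,\, f_1^N \Phi y\bigr) = \d^s_{f_1}\bigl(\Phi f_0^N x,\, \Phi f_0^N y\bigr) \leq \omega(\epsilon_0),
$$
a uniform bound, where $\omega$ is the modulus of continuity of $\Phi$. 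Now apply the \emph{expansion} estimate for $f_1$ along its stable leaf in backward time: since $f_1^{-1}$ expands leafwise distances on $W^s_{f_1}$ by at most a factor $\mu^{-1}$ per step (up to the constant $C$), pulling back $N$ steps gives
$$
\d^s_{f_1}(\Phi x, \Phi y) \leq C\,\mu^{-N}\, \d^s_{f_1}\bigl(f_1^N\Phi x, f_1^N\Phi y\bigr) \leq C\,\omega(\epsilon_0)\,\mu^{-N}.
$$
By the defining property of $N$ we have $\mu^{-N} \leq (\text{const})\cdot r^{-1}\cdot$, more precisely $\mu^{N+1} \leq \epsilon_0/(Cr)$ forces $\mu^{-N} \leq (Cr/\epsilon_0)^{-\log\mu/\log\mu} $ — cleaner: $N \geq \log(\epsilon_0/(Cr))/\log\mu - 1$, hence $\mu^{-N} \leq (\mu^{-1})\,(Cr/\epsilon_0)^{\theta}$ with $\theta = \log(\mu^{-1})/\log(\mu^{-1}) $; doing the bookkeeping honestly one gets $\d^s_{f_1}(\Phi x,\Phi y) \leq C' r^{\theta}$ for an exponent $\theta \in (0,1]$ determined by the ratio of the contraction rates of $f_0$ and $f_1$ on their stable bundles, uniformly in $x$. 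Because the leaves are uniformly $C^1$ and the foliation is continuous with a uniform local product structure, leafwise distance and ambient distance are bi-Lipschitz comparable on small scales, so this yields the Hölder bound $\d(\Phi x,\Phi y)\leq C'' \d(x,y)^{\theta}$ for $y\in W^s_{f_0}(x)$ nearby. Running the same argument with the roles of $f_0,f_1$ and $\Phi,\Phi^{-1}$ swapped gives the reverse Hölder bound, establishing bi-Hölder continuity along stable manifolds.

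The main obstacle, and the only place that needs genuine care rather than bookkeeping, is making the contraction/expansion estimates \emph{uniform over all leaves and all base points} and reconciling intrinsic (leafwise) distances with the ambient metric. Partial hyperbolicity only gives contraction on $E^s$ after possibly passing to an adapted metric and to an iterate, and stable leaves are a priori only immersed submanifolds; one must invoke the standard facts (as in Hirsch--Pugh--Shub \cite{HPS}) that for a partially hyperbolic map the stable foliation has uniformly $C^1$ leaves varying continuously, with a uniform local product neighborhood, so that short leafwise paths stay in a chart where the two metrics are comparable with uniform constants. Once that uniform infrastructure is in place, the telescoping estimate above is routine. I would also remark that no compatibility between the two foliations $W^s_{f_0}$ and $W^s_{f_1}$ is needed beyond the tautological relation $\Phi(W^s_{f_0}(x)) = W^s_{f_1}(\Phi x)$; this is precisely why the argument is insensitive to the fact that $\Phi$ need not be smooth, or even Lipschitz, transverse to the stable direction.
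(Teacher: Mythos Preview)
Your argument is exactly the standard one the paper invokes by citing \cite[Theorem~19.1.2]{Katok-Hasselblatt}; the paper gives no further details, so there is nothing to compare beyond that.

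One point does deserve care. Your first step asserts $\Phi(W^s_{f_0}(x)) = W^s_{f_1}(\Phi x)$ via the characterization ``$y\in W^s(x)$ iff $\d(f^nx,f^ny)\to 0$.'' That characterization is valid for Anosov diffeomorphisms---which is the setting of the cited theorem---but for a merely partially hyperbolic $f_1$ the condition $\d(f_1^n a,f_1^n b)\to 0$ only places $b$ in a (possibly larger) center-stable set, not necessarily on the strong stable leaf of $a$. Without that, your use of the leafwise metric $d^s_{f_1}$ and the backward expansion bound on $W^s_{f_1}$ is not justified. In the paper's application this is harmless, since $f_0$ and $f_1$ are $C^1$-close and HPS stability gives the leaf correspondence directly; but as a proof of the lemma exactly as stated it is a genuine gap (one the paper's own one-line citation also glosses over). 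A separate minor issue: your bookkeeping for the H\"older exponent is garbled---the displayed ratio collapses to $1$, and the backward expansion bound for $f_1^{-1}$ should involve the \emph{minimal} contraction rate of $f_1$ on $E^s_{f_1}$, not $\mu^{-1}$. Once the two contraction constants are distinguished, the intended exponent $\theta=\log\lambda_1/\log\mu_0$ drops out.
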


Lemma \ref{l:holder} follows from the standard argument as in  \cite[Theorem 19.1.2]{Katok-Hasselblatt}.

\begin{lemma}\label{l:sch}
Let $\alpha:\Gamma\to\Diff^1(X)$ be a smooth weakly hyperbolic action
and $\gamma_1,\ldots,\gamma_k\in\Gamma$ satisfy (\ref{eq:wh}).
Then there exist $c,\epsilon>0$ such that for every $x,y\in X$ satisfying $d(x,y)<\epsilon$,
there exists a path $\ell$ from $x$ to $y$ which consists of $2k$ pieces contained in
stable manifolds of $\alpha(\gamma_1),\ldots,\alpha(\gamma_k)$,
and $L(\ell)\le c\, d(x,y)$.  
\end{lemma}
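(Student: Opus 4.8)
The statement is local and quantitative, so the plan is first to extract from the stable manifold theorem a package of \emph{uniform} data over the compact manifold $X$, and then to realize the endpoint $y$ by a contraction argument applied to a ``multi-leaf sweep'' map. Write $f_i=\alpha(\gamma_i)$; each is a $C^1$ partially hyperbolic diffeomorphism of $X$. First I would invoke the stable manifold theorem to obtain, for each $i$, the continuous stable distribution $E^s_i$ and a continuous family of $C^1$ local stable discs $W^s_{i,\mathrm{loc}}(p)$ of a uniform radius $\rho_0>0$, tangent to $E^s_i(p)$ at $p$; by compactness of $X$ all constants here are uniform. Concretely I would record a single modulus $\omega$ with $\omega(t)\to 0$ as $t\to 0^+$ such that, in the exponential chart at any $p$, $W^s_{i,\mathrm{loc}}(p)$ is the graph over the $\rho_0$-ball of $E^s_i(p)$ of a $C^1$ map $g_{i,p}$ with $g_{i,p}(0)=0$, $Dg_{i,p}(0)=0$, $\|Dg_{i,p}(v)\|\le\omega(|v|)$. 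The ``slide along the $i$-th leaf'' maps $h_i(p,v):=\exp_p(v+g_{i,p}(v))$, for $v\in E^s_i(p)$, are then jointly continuous, $C^1$ in $v$, fix $p$ at $v=0$, have nonlinear part with Lipschitz constant $\le\omega(r)$ on the $r$-ball, and satisfy $d(p,h_i(p,v))\asymp|v|$.

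Next I would upgrade the pointwise spanning hypothesis~(\ref{eq:wh}) to a uniform estimate: the summation maps $\sigma_p\colon E^s_1(p)\oplus\cdots\oplus E^s_k(p)\to T_pX$, $(v_1,\dots,v_k)\mapsto v_1+\cdots+v_k$, are onto for every $p$, and since a right inverse chosen at $p$ remains a right inverse with only slightly larger norm on a neighborhood of $p$ (equivalently, $\sigma_p\sigma_p^{T}=\sum_j\Pi_{E^s_j(p)}$ is positive definite with continuously varying least eigenvalue), compactness yields a uniform $C_0\ge 1$ with right inverses $\sigma_p^{+}$, $\|\sigma_p^{+}\|\le C_0$.

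With this in hand I would, for a fixed $x$, build the sweep: trivialize each $E^s_j$ near $x$ by continuous isometries $\tau_j^{x}(p)\colon E^s_j(x)\to E^s_j(p)$ with $\tau_j^{x}(x)=\mathrm{id}$ and $\|\tau_j^{x}(p)-\mathrm{id}\|\le\omega(d(x,p))$, and set $\Psi_x(v_1,\dots,v_k)=p_k$ where $p_0=x$ and $p_j=h_j(p_{j-1},\tau_j^{x}(p_{j-1})v_j)$. Reading this in the exponential chart at $x$ and composing the estimates above, an induction on $j$ should give $\Psi_x(v)=\sigma_x(v)+R_x(v)$ with $R_x(0)=0$ and $\mathrm{Lip}(R_x|_{B_r})\le\widetilde\omega(r)$ for a uniform modulus $\widetilde\omega$, $\widetilde\omega(0^+)=0$ (the intermediate points $p_j$ all lie within $O(|v|)$ of $x$, and each step deviates from ``add $v_j$'' by $O(|v|\,\widetilde\omega(|v|))$). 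For $y$ with $d(x,y)$ small, I would then solve $\Psi_x(v)=\exp_x^{-1}(y)$ via the contraction $v\mapsto\sigma_x^{+}(\exp_x^{-1}(y)-R_x(v))$ on the ball of radius $\sim C_0\,d(x,y)$, which is a genuine $\tfrac12$-contraction once $C_0\widetilde\omega\le\tfrac12$ there, obtaining $v=v(x,y)$ with $|v|\lesssim C_0\,d(x,y)$ and $\Psi_x(v)=y$. The path $x=p_0\to p_1\to\cdots\to p_k=y$ has $k$ legs, the $j$-th inside a leaf of $W^s_{f_j}$ of length $\le 2|v|$, so $L(\ell)\le c\,d(x,y)$ with $c\sim kC_0$; choosing $\epsilon$ small enough that all smallness conditions hold whenever $d(x,y)<\epsilon$ finishes it. This already produces $k\le 2k$ pieces; one lands on exactly $2k$, if wished, by splitting at a midpoint and sweeping twice, or by inserting constant legs.

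I expect the main obstacle to be the two uniformizations over $X$: the uniform stable manifold theorem — continuous, hence uniform, $C^1$-dependence of the discs $W^s_{i,\mathrm{loc}}(p)$ on $p$, which is exactly what makes $\omega$, and after composition the Lipschitz constant of $R_x$, small \emph{uniformly in $x$} rather than merely $o(|v|)$ for each fixed $x$; and the passage from pointwise surjectivity of $\sigma_p$ to the uniform right-inverse bound $C_0$. Both are compactness arguments, but they are what the quantitative conclusion really rests on. Once they are in place, the composition bookkeeping that yields the form $\sigma_x(v)+R_x(v)$, the contraction step, and the length estimate are all routine.
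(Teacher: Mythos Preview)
Your approach is sound and takes a genuinely different route from the paper. The paper selects \emph{global} continuous unit vector fields $v_1,\dots,v_d$ spanning $TX$, arranged so that consecutive blocks lie inside the stable distributions of the $\alpha(\gamma_i)$; it then invokes a local controllability result of Kryszewski--Plaskacz \cite{kp} to obtain, for each center $x$, a path to any nearby $y$ that follows $v_1,\dots,v_d$ in order, and recovers the bound $L(\ell)\lesssim d(y_1,y_2)$ \emph{a posteriori} via a reverse triangle inequality exploiting uniform linear independence of the $v_i$ (the factor $2k$ arises from routing through a Lebesgue-number center). Your sweep-and-solve argument is more self-contained: it needs only local trivializations of the $E^s_j$ (always available), avoids the external controllability black box, and delivers $k$ pieces directly. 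The paper's route is shorter granted \cite{kp}, but implicitly uses that the stable subbundles admit global frames---true here since $X=G/\Lambda$ and, in the intended applications, the distributions are $C^0$-close to constant ones; your argument does not need this.

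One technical point to tighten: you assert $\mathrm{Lip}(R_x|_{B_r})\le\widetilde\omega(r)$, but the dependence of the $j$-th step on $v_1,\dots,v_{j-1}$ passes through $p\mapsto\tau_j^x(p)$ and $p\mapsto g_{j,p}$, which inherit only the \emph{continuity} (at best H\"older) of the stable distributions in $p$, not Lipschitz regularity. So $R_x$ is in general only continuous, and what your term-by-term estimates actually give is the $C^0$ bound $|R_x(v)|\le C|v|\,\widetilde\omega(|v|)$. The fix is painless: replace the Banach contraction by Brouwer. The continuous map $F(v)=\sigma_x^{+}\bigl(\exp_x^{-1}(y)-R_x(v)\bigr)$ sends the closed ball of radius $r=2C_0\,d(x,y)$ into itself once $C_0C\,\widetilde\omega(r)\le\tfrac12$, hence has a fixed point $v^*$ with $|v^*|\le 2C_0\,d(x,y)$; since $\sigma_x\sigma_x^{+}=\mathrm{id}$ this gives $\Psi_x(v^*)=y$, and the rest of your argument goes through unchanged.
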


\begin{proof}
We will use an argument similar to \cite[Lemma~3.1]{sch}.

Let $d=\dim X$. There exists a family of (global) continuous unit vector fields
$v_1,\ldots,v_d$ that span the tangent space at every point and
for some $1=d_0\le d_1\le\cdots\le d_{k}=d+1$ and every $i=1,\ldots, k$,
the vectors $v_{d_{i-1}},\ldots,v_{d_{i}-1}$ are contained in
the stable distribution of $\alpha(\gamma_i)$.
Let $\delta>0$. There exists $\delta'>0$ such that $d(u,w)<\delta'$ implies that
$d(v_i(u),v_i(w))<\delta$ for all $i$.
By \cite[Corollary~4.5]{kp}, for every $x\in X$, there exists $\epsilon(x)>0$ such that
every $y\in B_{\epsilon(x)}(x)$ can be connected to $x$ by a path $\ell$ of length
at most $\delta'/2$, and for some 
$0=t_0\le t_1\le\cdots\le t_{d}=L(\ell)$, we have $\ell'(t)=v_i(\ell(t))$ when $t\in [t_{i-1},t_i)$.
Let $\epsilon>0$ be the Lebesgue number of the cover $\{B_{\epsilon(x)}(x)\}$.
Then every $y_1,y_2\in X$ such that $d(y_1,y_2)<\epsilon$ are connected by a path $\ell$
which consists of $2k$ pieces tangent to $v_j$'s and $L(\ell)<\delta'$.
To estimate the distance $d(y_1,y_2)$, we may assume, without loss of generality,  that we work in an open neighborhood of $\mathbb{R}^d$ 
equipped with the standard metric. 
By the triangle inequality,
\begin{align*}
\|y_1-y_2\|&=\left\|\sum_{i} \int_{t_{i-1}}^{t_i}v_{i}(\ell(t))dt\right\|\\
&\ge \left\|\sum_{i} (t_i-t_{i-1})v_{i}(y_1)\right\|-\sum_{i} \int_{t_{i-1}}^{t_i} \|v_{i}(\ell(t))-v_{i}(y_1)\|dt\\
&\ge (c-\delta) L(\ell)
\end{align*}
where
$$
c=\min\left\{\left\| \sum_i s_i v_i(y)\right\|:\, \sum_i |s_i|= 1,\, y\in X  \right\}>0.
$$
Taking $\delta$ sufficiently small, this implies the
estimate for $L(\ell)$. Since the stable distributions are uniquely integrable, $\ell([t_{i-1},t_i))$
is contained in the stable manifold of $\alpha(\gamma_{i})$.
\end{proof}

\vspace{0.4cm}
\begin{proof}[Proof of Theorem \ref{th:H}]
Let $\gamma _1, \ldots, \gamma _k\in\Gamma$ be elements satisfying (\ref{eq:wh}).
By Lemma \ref{l:holder}, the map $\Phi$ is bi-H\"older restricted to the stable
manifolds of $\alpha_0(\gamma_i)$'s. By Lemma \ref{l:sch}, for sufficiently close
$x,y\in X$, there exist points $x_0=x,x_1,\ldots,x_{2k}=y$ such that $x_{j-1}$ and $x_{j}$
are on the same stable manifold of some $\alpha_0(\gamma_{i_j})$,
and $d(x_{j-1},x_{j})\le c\, d(x,y)$. Then
\begin{align*}
d(\Phi(x),\Phi(y))&\le \sum_{j=1}^{2k} d(\Phi(x_{j-1}),\Phi(x_j)) \le \sum_{j=1}^{2k} c_j
d(x_{j-1},x_j)^{\theta_j}\\
&\le \left(\sum_{j=1}^{2k} c_jc^{\theta_j}\right)d(x,y)^\theta
\end{align*}
where $\theta=\min \theta_j$. 

By Lemma \ref{l:wh}, the action $\alpha_1$ is also weakly hyperbolic. 
Then the proof that $\Phi^{-1}$ is H\"older follows the same argument.
\end{proof}

\ignore{
\begin{lemma} Let  $\Phi$ be a continuous conjugacy between a weakly hyperbolic affine algebraic action $\alpha _0$ and a $C^1$-close perturbation $\alpha _1$.   Then $\Phi$  is H\"{o}lder.\end{lemma}

\begin{proof}   Since $\alpha _1 $ is $C^1$-close to $\alpha _0$, we may assume that there is a finite collection of elements $\gamma _1, \ldots, \gamma _k$ from $\Gamma$ which act as partially hyperbolic diffeomorphisms  under both
$\alpha _0$ and $\alpha _1$, and that the stable distributions for $\alpha _0 (\gamma _1), \ldots, \alpha _0 (\gamma _k)$ span the tangent space of $M$.  As the stable manifolds of $\alpha _0 (\gamma _1), \ldots, \alpha _0 (\gamma _k)$ are smooth foliations of $M$, given two points $p$ and $q$ in $M$, we can easily find  a path $c$ from  $p$ to $q$  composed  of pieces which lie entirely in the stable manifolds of  $\alpha _0 (\gamma _1), \ldots, \alpha _0 (\gamma _k)$, and such that the length $l(c)$ of $c$
satisfies  $l(c) < A \: d(p,q) $ for some constant $A>0$ independent of $p$ and $q$.
The previous lemma now immediately implies that $\Phi$ is H\"{o}lder.
\end{proof}
}

\subsection{Invariance of fast stable manifolds}\label{s2}

Let $f\in\Diff(X)$, and the tangent bundle $TX$ has continuous $f$-invariant splitting
\begin{equation}\label{eq:hyp}
TX=E^-\oplus E^+
\end{equation}
such that for some $\lambda\in (0,1)$ and $\mu>\lambda$,\footnote{The notation $A\ll  B$ means that there exists $c>0$, independent
of other parameters, such that $A\le c\, B$.}
\begin{align}\label{eq:hyp2}
\|D(f^n)_xv\|&\ll \lambda^n\|v\|\quad\hbox{for all $n\ge 0$, $x\in X$, and $v\in E_x^-$},\\
\|D(f^n)_xv\|&\gg \mu^n\|v\|\quad\hbox{for all $n\ge 0$, $x\in X$, and $v\in E_x^+$}.\nonumber
\end{align}
We recall (see, for example, \cite[Theorem~4.1]{pe}) that the distribution $E^-$ is integrable to 
the {\it fast stable} foliation $\{W^{fs}_{f}(x)\}_{x\in X}$, and
this foliation is H\"older continuous with $C^\infty$-leaves.
We denote by $d^{fs}$ the induced metrics on the leaves of this foliation.
For $\rho>\lambda$ and $x,y\in X$ such that $y\in W^{fs}_f(x)$,
$$
d^{fs}(f^n(x),f^n(y))\ll \rho^n d^{fs}(x,y).
$$
There exists $\epsilon_0>0$ such that for every 
$z,w\in X$ satisfying $w\in W^{fs}_{f}(z)$ and $d^{fs}(z,w)<\epsilon_0$, we have
\begin{equation}\label{eq:fs_s}
d^{fs}(z,w)\ll d(z,w)\le d^{fs}(z,w).
\end{equation}

Let $f_0\in \Aff(X)$ be such that $Df_0$ is partially hyperbolic,
and $\lambda_0<\mu_0$ denote the least two absolute values of the eigenvalues of $Df_0$.
If $f\in\Diff(X)$ is a $C^1$-small perturbation of $f_0$, then we have a splitting as above
with $\lambda=\lambda_0+\epsilon$ and $\mu=\mu_0-\epsilon$ for some small $\epsilon>0$,
depending on $d_{C^1}(f,f_0)$
(see \cite[Lemma~3.5]{pe}). The fast stable manifolds $W^{fs}_{f}(x)$ are defined with respect to this splitting.
Note that
$$
W^{fs}_{f_0}(x)=\exp(W^{min}_{f_0})x
$$
where $\exp$ is the Lie exponential map, and $W^{min}_{f_0}$ is defined as on page \pageref{eq:wm}.

The aim of this section is to prove the following theorem.

\begin{thm}\label{th:ffs}
Let $\alpha_0:\Gamma\to\Aff(X)$ and $\alpha_1:\Gamma\to\Diff(X)$
be $C^1$-close actions of a finitely generated discrete group $\Gamma$, and let $\Phi:X\to X$ be a homeomorphism such that
$$
\Phi\circ \alpha_0(\gamma)=\alpha_1(\gamma)\circ \Phi\quad\hbox{for all $\gamma\in\Gamma$}.
$$
Assume that $(D\alpha_0)(\Gamma)$ acts irreducibly on $\hbox{\rm Lie}(G)$.
Then for every partially hyperbolic $f_0:=\alpha_0(\gamma)$ and $f:=\alpha_1(\gamma)$, $\gamma\in\Gamma$,
such that $Df_0$ is semisimple on $W^{min}_{f_0}$,
$$
\Phi(W^{fs}_{f_0}(z))= W^{fs}_{f}(\Phi(z)) \quad \hbox{for all $z\in X$.}
$$
Moreover, the map $\Phi$ is bi-H\"older with respect to the induced metrics on
fast stable leaves of $f_0$ and $f$.
\end{thm}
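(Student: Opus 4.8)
The plan is to characterize fast stable leaves purely dynamically — by the exponential rate at which forward orbits converge together — and then transport that characterization through $\Phi$. First I would record the geometric input: by stability of partial hyperbolicity (\cite[Lemma~3.5]{pe}), once $\alpha_1$ is $C^1$-close to $\alpha_0$ the diffeomorphism $f=\alpha_1(\gamma)$ is partially hyperbolic and carries a fast stable foliation $W^{fs}_f$ with rates $\lambda=\lambda_0+\epsilon<\mu=\mu_0-\epsilon$, where $\lambda_0<\mu_0$ are the two smallest eigenvalue moduli of $Df_0$ and $\epsilon\to0$ with $d_{C^1}(\alpha_1,\alpha_0)$. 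Because $Df_0$ is semisimple on $W^{min}_{f_0}$ with all eigenvalues of modulus $\lambda_0$, the restriction of $f_0$ to the affine leaf $W^{fs}_{f_0}(z)=\exp(W^{min}_{f_0})z$ is, in the natural leaf metric, a conformal contraction by $\lambda_0$, so that
$$
d^{fs}(f_0^{\,n}z,f_0^{\,n}y)=\lambda_0^{\,n}\,d^{fs}(z,y),\qquad n\ge0,
$$
for $y\in W^{fs}_{f_0}(z)$; together with (\ref{eq:fs_s}) this also bounds $d(f_0^{\,n}z,f_0^{\,n}y)$ above by $\lambda_0^{\,n}\,d^{fs}(z,y)$. (This is the one place the hypothesis of semisimplicity on $W^{min}_{f_0}$ is genuinely exploited: it gives a clean exponential rate with no polynomial correction.)

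Next I would establish an a priori inclusion into full stable manifolds. If $y\in W^{fs}_{f_0}(z)$ then $d(f_0^{\,n}z,f_0^{\,n}y)\to0$, so by uniform continuity of $\Phi$ on the compact space $X$ we get $d(f^{\,n}\Phi z,f^{\,n}\Phi y)=d(\Phi f_0^{\,n}z,\Phi f_0^{\,n}y)\to0$; hence $\Phi y$ lies on the stable manifold $W^s_f(\Phi z)$, the set of points positively asymptotic to the orbit of $\Phi z$. The same argument applied to $\Phi^{-1}$, which conjugates $\alpha_1$ to $\alpha_0$ and is bi-H\"older by Theorem \ref{th:H}, gives $\Phi(W^s_{f_0}(z))=W^s_f(\Phi z)$, and in particular $\Phi(W^{fs}_{f_0}(z))\subseteq W^s_f(\Phi z)$. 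When $\mu_0>1$ this already finishes the first assertion: then $f_0$, hence $f$, is Anosov along these directions and $W^s_f=W^{fs}_f$, so $\Phi(W^{fs}_{f_0}(z))\subseteq W^{fs}_f(\Phi z)$, and the reverse inclusion comes from $\Phi^{-1}$.

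The heart of the proof is the refinement from $W^s$ to $W^{fs}$ in the case $\mu_0\le1$, where $W^{fs}_f$ may be a proper subfoliation of $W^s_f$ and one must exclude that $\Phi$ bends a fast stable leaf into the slow stable directions. The mechanism is a rate dichotomy: if $y\in W^{fs}_{f_0}(z)$ but $\Phi y\notin W^{fs}_f(\Phi z)$, then the displacement of $\Phi y$ from $\Phi z$ inside $W^s_f(\Phi z)$ has nonzero component transverse to $E^-_f$, so the lower bound in (\ref{eq:hyp2}) forces $d(f^{\,n}\Phi z,f^{\,n}\Phi y)\gg \mu^{\,n}$; on the other hand, if $\Phi$ is $\theta$-H\"older along the fast stable leaf through $z$, then $d(f^{\,n}\Phi z,f^{\,n}\Phi y)=d(\Phi f_0^{\,n}z,\Phi f_0^{\,n}y)\ll \lambda_0^{\,n\theta}$, and the two are compatible for all $n$ only if $\mu\le\lambda_0^{\theta}$. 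Thus I need the leafwise H\"older exponent of $\Phi$ to exceed roughly $\log\mu_0/\log\lambda_0$, and the crude exponent furnished by the weak hyperbolicity argument of Section \ref{s1} is far too small. This is where irreducibility of $(D\alpha_0)(\Gamma)$ must be used, via the essential-set lemma alluded to in Remark \ref{rem} (Lemma \ref{l:ess}): I expect that the conformality of $f_0$ on the leaves lets one renormalize so that the leafwise H\"older defect of $\Phi$ behaves like a $(D\alpha_0)(\Gamma)$-invariant object, whence irreducibility forces it to be trivial, producing a full-measure — hence dense — set $E$ of base points $z$ at which $\Phi|_{W^{fs}_{f_0}(z)}$ has H\"older exponent arbitrarily close to $1$. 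For $z\in E$ the dichotomy above yields $\Phi(W^{fs}_{f_0}(z))\subseteq W^{fs}_f(\Phi z)$; since both foliations vary continuously and $\Phi$ is a homeomorphism, the set of $z$ with this property is closed, hence all of $X$, and connectedness of leaves promotes the local statement to entire leaves. Applying the same to $\Phi^{-1}$ gives the opposite inclusion, so $\Phi(W^{fs}_{f_0}(z))=W^{fs}_f(\Phi z)$ for every $z$.

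Finally, I would obtain the bi-H\"older statement for the leaf metrics by restricting $\Phi$ to the two conformal contractions $f_0|_{W^{fs}_{f_0}(z)}$ and $f|_{W^{fs}_f(\Phi z)}$ and repeating the standard argument of Lemma \ref{l:holder}, passing between $d$ and $d^{fs}$ by (\ref{eq:fs_s}). The main obstacle — and the only step that is not formal bookkeeping — is the refinement in the case $\mu_0\le1$: upgrading the leafwise H\"older exponent of $\Phi$ past $\log\mu_0/\log\lambda_0$, which is exactly what forces us to bring in both the semisimplicity hypothesis (to get a conformal model on the leaves) and the irreducibility hypothesis (through the essential-set construction). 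Everything else is stability of hyperbolicity, continuity of $\Phi$, and the elementary comparison of exponential rates above.
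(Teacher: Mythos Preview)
Your rate dichotomy is correct in spirit, but the step you yourself flag as ``the main obstacle'' is a genuine gap: you have no mechanism for pushing the leafwise H\"older exponent of $\Phi$ past $\log\mu_0/\log\lambda_0$. The ``renormalization'' argument you sketch---turning a H\"older defect into a $(D\alpha_0)(\Gamma)$-invariant object and killing it by irreducibility---has no content as stated; there is no evident way to realize the leafwise H\"older exponent as an invariant of the linear action, and before you know $\Phi$ preserves the fast stable foliation you cannot even define a leafwise exponent cleanly. So the hard case $\mu_0<1$ is not handled, and this is not the role Lemma~\ref{l:ess} plays.

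The paper sidesteps the obstacle by using a different dynamical characterization of $W^{fs}_{f_0}$. Instead of comparing rates under $f_0^{\,n}$ alone, it studies the sequences $f_0^{-n}g_kf_0^{\,n}$ for a finite \emph{essential set} $g_1,\dots,g_l\in\alpha_0(\Gamma)$ (this is what Lemma~\ref{l:ess} actually produces from irreducibility). On the perturbed side one has (Proposition~\ref{p:fs})
\[
d\bigl(f^{-n}gf^{\,n}(z),f^{-n}gf^{\,n}(w)\bigr)\ll \nu^{\,n}\,d^{fs}(z,w),\qquad w\in W^{fs}_f(z),
\]
with $\nu>1$ that can be taken \emph{as close to $1$ as desired} by shrinking $d_{C^1}(f,f_0)$. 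Transported through the bi-H\"older $\Phi^{-1}$ (with its fixed, possibly tiny exponent $\theta$) this yields, for $y\in\Phi^{-1}(W^{fs}_f(\Phi x))$,
\[
\max_k d\bigl(f_0^{-n}g_kf_0^{\,n}(x),f_0^{-n}g_kf_0^{\,n}(y)\bigr)\ll \nu^{\theta n}\,d(x,y)^{\theta^2}.
\]
The technical core (Theorem~\ref{th:quant}, proved by a cone argument in $\hbox{Lie}(G)$) is the converse: this growth bound, with $\nu$ sufficiently close to $1$ and an essential family of $g_k$'s probing every direction via $P_1(Dg_k)$, forces $y\in W^{fs}_{f_0}(x)$. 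The crucial feature is that the smallness of $\theta$ is compensated by the smallness of $\log\nu$, so no improvement of the H\"older exponent is ever required. Your forward-orbit comparison under $f_0^{\,n}$ alone cannot achieve this, because its gap parameter $\mu_0/\lambda_0$ is fixed by $f_0$ and cannot be tuned; inserting the $g_k$'s and conjugating by $f_0^{\pm n}$ is exactly what creates a tunable parameter.
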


Let us start with some preliminary reductions.
We will prove that 
\begin{equation}\label{eq:sub}
\Phi^{-1}(W^{fs}_f(z))\subset W^{fs}_{f_0}(\Phi^{-1}(z))\quad\hbox{for all $z\in X$.}
\end{equation}
This also implies that the equality.
Indeed, it follows from \eqref{eq:sub} that every leaf $W^{fs}_{f_0}(\Phi^{-1}(z))$ is
a disjoint union of sets of the form $\Phi^{-1}(W^{fs}_{f}(y))$ for some $y\in X$.
By \cite[Lemma~3.5]{pe}, the fast stable leaves of $f_0$ and $f$ have the same dimension. 
Hence, by the invariance of domain, every set $\Phi^{-1}(W^{fs}_{f}(y))$ is open in $W^{fs}_{f_0}(\Phi^{-1}(z))$.
Since $W^{fs}_{f_0}(\Phi^{-1}(z))$ is connected, we deduce that
$$
\Phi^{-1}(W^{fs}_{f}(z))= W^{fs}_{f_0}(\Phi^{-1}(z)).
$$

Let 
\begin{equation}\label{eq:se}
\mathcal{S}_{\epsilon'}(x)=\{\Phi^{-1}(z):\; z\in W^{fs}_{f}(\Phi(x)),\, d^{fs}(z,\Phi(x))<\epsilon'\}.
\end{equation}
We will show that there exists $\epsilon'\in (0,\epsilon_0)$ such that
for every $x\in X$,
$$
\mathcal{S}_{\epsilon'}(x)\subset W^{fs}_{f_0}(x).
$$
This will imply the theorem.

First, we observe the following property of points lying on the same fast stable leaf for affine actions:

\begin{prop}\label{p:fs0}
Let $f_0,g_0\in\Aff(X)$ be such that $(Df_0)|_{W^{min}_{f_0}}$ is semisimple.
Then there exists $c>0$ such that for every $z,w\in X$ satisfying $w\in W^{fs}_{f_0}(z)$ and $n\ge k\ge 0$,
$$
d(f_0^{-k}g_0f_0^n(z),f_0^{-k}g_0f_0^n(w))\le c\,\lambda_0^{n-k} d^{fs}(z,w)
$$
where $\lambda_0$ is the least absolute value of the eigenvalues of $Df_0$.
\end{prop}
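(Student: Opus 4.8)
The plan is to reduce everything to a computation on $\hbox{Lie}(G)$ via the exponential map and then exploit the semisimplicity hypothesis. First I would recall that since $f_0, g_0 \in \Aff(X)$, each lifts to a composition of a translation and an automorphism of $G$, and that $W^{fs}_{f_0}(z) = \exp(W^{min}_{f_0})\cdot z$ as noted on page \pageref{eq:wm}. Writing $w = \exp(v)\cdot z$ with $v \in W^{min}_{f_0}$ and $d^{fs}(z,w) \asymp \|v\|$ for $\|v\|$ small (uniformly in $z$, by compactness of $X$), the key observation is that applying an affine map $h$ transports $\exp(v)\cdot z$ to $\exp(Dh\, v)\cdot h(z)$. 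Iterating, $f_0^{-k} g_0 f_0^n$ sends $z$ and $w = \exp(v)z$ to two points of the form $p$ and $\exp(u)\cdot p$ with $u = (Df_0)^{-k}\,(Dg_0)\,(Df_0)^n\, v$. Since $d$ is comparable to the word metric on small scales and $\exp$ is Lipschitz on a neighborhood of $0$, it suffices to bound $\|u\|$ by $c\,\lambda_0^{n-k}\|v\|$.

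Next I would estimate $\|(Df_0)^{-k}(Dg_0)(Df_0)^n v\|$ for $v \in W^{min}_{f_0}$. By hypothesis (ii)-type semisimplicity, $(Df_0)|_{W^{min}_{f_0}}$ is semisimple with all eigenvalues of modulus exactly $\lambda_0$, so there is a norm (or a uniform constant) for which $\|(Df_0)^n v\| \asymp \lambda_0^n \|v\|$ for $v \in W^{min}_{f_0}$ and all $n \geq 0$; in particular $(Df_0)^n v$ stays in $W^{min}_{f_0}$ and has norm $\asymp \lambda_0^n \|v\|$. Then $Dg_0$ is a fixed linear map, so $\|(Dg_0)(Df_0)^n v\| \ll \lambda_0^n \|v\|$, but now the vector $(Dg_0)(Df_0)^n v$ need no longer lie in $W^{min}_{f_0}$. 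Applying $(Df_0)^{-k}$: since $\lambda_0$ is the least modulus of an eigenvalue of $Df_0$, the operator norm of $(Df_0)^{-k}$ on all of $\hbox{Lie}(G)$ is $\ll \lambda_0^{-k}$ up to a subexponential factor — but to get the clean bound $\lambda_0^{n-k}$ I would instead argue that $(Df_0)^{-k}$ expands by at most $\lambda_0^{-k}$ on the generalized eigenspaces with eigenvalue of modulus $\lambda_0$ (which is exactly the subspace where semisimplicity holds), and that on the complementary invariant subspaces $(Df_0)^{-k}$ is a contraction, hence bounded by $1 \le \lambda_0^{-k}$. Combining, $\|u\| \ll \lambda_0^{-k}\cdot \lambda_0^n \|v\| = \lambda_0^{n-k}\|v\|$.

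The remaining care is to control the passage back and forth between $d^{fs}$ on the source and $d$ on the target, and to ensure all implied constants are uniform in $z, w, n, k$. For the source, $d^{fs}(z,w) \asymp \|v\|$ holds uniformly by (\ref{eq:fs_s}) together with compactness of $X$ and the fact that $W^{fs}_{f_0}$ is a smooth foliation with uniformly bounded geometry (being the image under $\exp$ of a fixed subspace translated around). For the target, $d(f_0^{-k}g_0f_0^n(z), f_0^{-k}g_0f_0^n(w)) = d(p, \exp(u)p) \ll \|u\|$ uniformly in $p$, again by compactness, provided $\|u\|$ is small — which it is, being bounded by $\lambda_0^{n-k}\|v\| \le \|v\|$ for $n \geq k$, after shrinking the allowed range of $d^{fs}(z,w)$. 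I expect the main obstacle to be the clean exponent $\lambda_0^{n-k}$ rather than $\lambda_0^{n-k}$ times a polynomial correction: this is exactly where semisimplicity of $(Df_0)|_{W^{min}_{f_0}}$ is indispensable, since it forbids Jordan blocks on the slowest eigenspace and thereby kills the polynomial factor that would otherwise appear when iterating $(Df_0)^{\pm 1}$.
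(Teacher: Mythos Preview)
Your approach is essentially identical to the paper's: reduce to small $d^{fs}(z,w)$, write $w=\exp(v)z$ with $v\in W^{min}_{f_0}$, use affinity to get $f_0^{-k}g_0f_0^n(w)=\exp\bigl(D(f_0^{-k}g_0f_0^n)v\bigr)\cdot f_0^{-k}g_0f_0^n(z)$, and then bound $\|D(f_0^{-k}g_0f_0^n)v\|\ll\lambda_0^{n-k}\|v\|$. One small correction to your justification of the last bound: $(Df_0)^{-k}$ need not be a contraction on the complementary generalized eigenspaces (other eigenvalues of $Df_0$ may still have modulus $<1$); the correct reason is that those eigenvalues have modulus $\mu_0>\lambda_0$, so even allowing Jordan blocks one gets $\|(Df_0)^{-k}\|\le C\,\mu_0^{-k}k^{\dim G}\ll \lambda_0^{-k}$ on the complement, which together with the exact rate $\lambda_0^{-k}$ on $W^{min}_{f_0}$ (from semisimplicity) yields $\|(Df_0)^{-k}\|\ll\lambda_0^{-k}$ on all of $\hbox{Lie}(G)$.
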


\begin{proof}
It suffices to prove the proposition when $d^{fs}(z,w)$ small.
Write $w=\exp(v)z$ for $v\in W^{min}_{f_0}$. 
Then 
$$
w=\exp(D(f_0^{-k}g_0f_0^n)v)f_0^{-k}g_0f_0^n(z),
$$
and
it suffices to show that for a norm on $\hbox{Lie}(G)$,
$$
\|D(f_0^{-k}g_0f_0^n)v\|\ll \lambda_0^{n-k}\|v\|,
$$
which is easy to check.
\end{proof}

A similar but weaker property also holds for small perturbations of affine actions:

\begin{prop}\label{p:fs}
Let $f_0\in \Aff(X)$, $g\in \Diff(X)$, and $\nu>1$. Then  
there exists $c>0$ such that for any sufficiently $C^1$-small
perturbations $f\in \Diff(X)$ of $f_0$, $z,w\in X$ satisfying $w\in W^{fs}_f(z)$, and $n\ge 0$,
\begin{equation}\label{eq:ws}
d(f^{-n}gf^n(z),f^{-n}gf^n(w))\le c\,\nu^n d^{fs}(z,w).
\end{equation}
\end{prop}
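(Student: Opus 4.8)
The plan is to follow the three elementary maps composing $f^{-n}gf^n$ — namely $f^n$, then $g$, then $f^{-n}$ — and control how each distorts distances, the point being that the contraction of $f^n$ along fast stable leaves almost compensates the worst possible expansion of $f^{-n}$. First I would reduce to the case $d^{fs}(z,w)<\epsilon'$ for a fixed small $\epsilon'\in(0,\epsilon_0)$: for arbitrary $z,w$ with $w\in W^{fs}_f(z)$, join them by a rectifiable path in $W^{fs}_f(z)$ of length close to $d^{fs}(z,w)$, subdivide it into consecutive points $z=p_0,\dots,p_N=w$ with $d^{fs}(p_{i-1},p_i)<\epsilon'$, apply the estimate to each pair $(p_{i-1},p_i)$, and sum; since the constant obtained below is independent of $n$ and of the pair, and $\sum_i d^{fs}(p_{i-1},p_i)$ is at most the length of the path, this recovers the general statement with the same constant.

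So assume $d^{fs}(z,w)<\epsilon'$. Recall from the discussion preceding Theorem \ref{th:ffs} that for $f$ a sufficiently $C^1$-small perturbation of $f_0$ the bundle $TX$ splits $f$-invariantly as $E^-\oplus E^+$ with $\|D(f^n)_xv\|\ll\lambda^n\|v\|$ on $E^-$, where $\lambda=\lambda_0+\epsilon\in(0,1)$ and $\lambda_0$ is the least modulus of an eigenvalue of $Df_0$, and that $E^-$ integrates to $W^{fs}_f$. Step one: $f$ carries fast stable leaves onto fast stable leaves, so pushing a path in $W^{fs}_f(z)$ from $z$ to $w$ forward by $f^n$ gives $d^{fs}(f^n(z),f^n(w))\ll\lambda^n d^{fs}(z,w)$; choosing $\epsilon'$ small relative to $\epsilon_0$ and the implied constant, this stays below $\epsilon_0$ for all $n\ge0$, so by (\ref{eq:fs_s}) it also bounds the ambient distance $d(f^n(z),f^n(w))$. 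Step two: $g$ is a fixed $C^1$-diffeomorphism of the compact manifold $X$, hence globally Lipschitz, so $d(gf^n(z),gf^n(w))\ll\lambda^n d^{fs}(z,w)$. Step three: with $\Lambda_n=\sup_{x\in X}\|D(f^{-n})_x\|$, pushing a minimizing geodesic of $X$ forward by $f^{-n}$ gives $d(f^{-n}(p),f^{-n}(q))\le\Lambda_n\,d(p,q)$ for all $p,q$, so altogether $d(f^{-n}gf^n(z),f^{-n}gf^n(w))\ll\lambda^n\Lambda_n\,d^{fs}(z,w)$.

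It remains to bound $\lambda^n\Lambda_n$ by a constant times $\nu^n$, which is where the one genuinely technical point enters: for every $\eta>0$ one has $\Lambda_n\ll_\eta(\lambda_0^{-1}+\eta)^n$, uniformly in $n$ and uniformly over a $C^1$-neighborhood of $f_0$. For the affine map $f_0$ this is clear, the derivative of an iterate of an affine map being governed by the corresponding power of its linear part, whose spectral radius here is $\lambda_0^{-1}$; for a $C^1$-small perturbation it follows by a standard persistence argument — fix $m$ with $\sup_x\|D(f_0^{-m})_x\|<(\lambda_0^{-1}+\eta/2)^m$, note that $\sup_x\|D(f^{-m})_x\|<(\lambda_0^{-1}+\eta)^m$ for all $f$ sufficiently $C^1$-close to $f_0$ ($m$ being fixed), and write $n=qm+r$ with $0\le r<m$. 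Granting this, $\lambda^n\Lambda_n\ll_\eta\big((\lambda_0+\epsilon)(\lambda_0^{-1}+\eta)\big)^n$, and the base tends to $1$ as $\epsilon$ (equivalently, the size of the perturbation) and $\eta$ tend to $0$; hence, after fixing such an $\eta$ and then shrinking the admissible $C^1$-neighborhood of $f_0$ in terms of $\nu$, we obtain $(\lambda_0+\epsilon)(\lambda_0^{-1}+\eta)\le\nu$ and therefore the proposition, with $c$ the product of the implied constants accumulated above (all uniform over the neighborhood). I expect this uniform estimate on $\Lambda_n$ — in effect, that a small $C^1$-perturbation cannot push the top Lyapunov exponent of the inverse cocycle appreciably above $\log\lambda_0^{-1}$ — to be the only real obstacle; the rest is routine bookkeeping with the constants from (\ref{eq:hyp2}) and (\ref{eq:fs_s}).
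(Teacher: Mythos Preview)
Your argument is correct and follows essentially the same route as the paper: both rest on the two estimates $\|D(f^n)|_{E^-}\|\ll\lambda_+^n$ and $\|D(f^{-n})\|\ll\lambda_-^{-n}$ with $\lambda_+/\lambda_-<\nu$ for $f$ sufficiently $C^1$-close to $f_0$. The paper's version is a bit more direct---it bounds $\|D(f^{-n}gf^n)|_{T_uW^{fs}_f(u)}\|$ via the chain rule and then pushes forward a single minimizing curve $\ell\subset W^{fs}_f(z)$ with $L(\ell)=d^{fs}(z,w)$, which makes your reduction to small $d^{fs}$ and the three-step endpoint argument unnecessary.
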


\begin{proof}
Let $\lambda_0$ denote the least absolute value of the eigenvalues of $Df_0$.
Take $\lambda_-<\lambda_0<\lambda_+$ such that $\frac{\lambda_+}{\lambda_-}<\nu$.
For $f$ sufficiently $C^1$-close to $f_0$, we have
$$
\|D(f^{-n})_u\|\ll \lambda_-^{-n}\quad\hbox{for all $u\in X$ and $n\ge 0$,}
$$
and
$$
\left\|D(f^{n})|_{T_u(W^{fs}_f(u))}\right\|\ll \lambda_+^{n}\quad\hbox{for all $u\in X$ and $n\ge 0$.}
$$ 
This implies that
$$
\left\|D(f^{-n}gf^n)|_{T_u(W^{fs}_f(u))}\right\|\ll \left(\frac{\lambda_+}{\lambda_-}\right)^{n}\quad\hbox{for all $u\in X$ and $n\ge 0$.}
$$ 
Let $\ell$ be a smooth curve in $W^{fs}_f(z)$ from $z$ to $w$ such that $L(\ell)=d^{fs}(z,w)$.
Then
$$
L(f^{-n}gf^n(\ell))\ll\left(\frac{\lambda_+}{\lambda_-}\right)^{n}L(\ell)<\nu^n d^{fs}(z,w)
$$
for all $n\ge 0$. This proves the proposition.
\end{proof}

It turns out that property (\ref{eq:ws}) characterizes points lying on the same
fast stable leaves.
This observation is crucial for the proof of Theorem \ref{th:ffs}
and is the main point of Theorem \ref{th:quant} below.
Since the proof of Theorem \ref{th:quant} is quite involved, we first present
its linear analogue -- Proposition \ref{p:lin}. Although the argument in the proof of Theorem \ref{th:quant}
follows the same idea, it requires more delicate quantitative estimates because
we have to work in injectivity neighborhoods of the exponential map.

Let $A\in\hbox{GL}_l(\mathbb{R})$. We denote by
$\lambda_1<\cdots<\lambda_d$ the absolute values of the eigenvalues of $A$,
and $P_i$ denote the projection to the
sum of the generalized eigenspaces of $A$ corresponding to $\lambda_i$
along the other eigenspaces. 

\begin{prop}\label{p:lin}
Let $B_1,\ldots, B_k\in \hbox{\rm GL}_l(\mathbb{R})$ be such that
for some $\eta>0$
\begin{equation}\label{eq:alpha}
\max_k\|P_1B_kv\|>\eta\|v\|\quad \hbox{for all $v\in\mathbb{R}^l$.}
\end{equation}
Then there exists $\nu>1$ such that
$$
W^{min}_A=\{v:\, \max_k \|A^{-n}B_kA^n\,v\|=O(\nu^n)\quad \hbox{as $n\to\infty$}\}.
$$
\end{prop}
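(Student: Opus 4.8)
The plan is to prove the two inclusions separately, working entirely on the level of the linear algebra of $A$ and the $B_k$. Decompose $\mathbb{R}^l = \bigoplus_{i=1}^d V_i$ into the generalized eigenspaces of $A$, so that $W^{min}_A = V_1$ and $P_i$ is the projection onto $V_i$ along the rest. The key quantitative fact I would isolate first is the growth rate of $A^{-n}B_kA^n$ on each block: if $v \in V_i$, then $A^n v$ grows like $\lambda_i^n$ (up to subexponential polynomial factors from Jordan blocks, which I would absorb into an $\varepsilon$), and then applying $B_k$ and $A^{-n}$, the component of $A^{-n}B_kA^n v$ in $V_j$ grows like $(\lambda_i/\lambda_j)^n$ up to polynomial factors. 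So for $v \in V_i$ the worst growth of $\max_k\|A^{-n}B_kA^nv\|$ is governed by $\lambda_i/\lambda_1$, the largest such ratio (attained in the $V_1$-component, provided $P_1 B_k v \neq 0$ for some $k$, which is exactly where hypothesis \eqref{eq:alpha} enters).

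\textbf{The easy inclusion ($\subseteq$).} For $v \in W^{min}_A = V_1$, each block ratio $\lambda_1/\lambda_j \le 1$, so $\|A^{-n}B_kA^n v\|$ is bounded by a polynomial in $n$ (coming from Jordan blocks of $A$ on $V_1$ and on the $V_j$'s), hence certainly $O(\nu^n)$ for any $\nu>1$. This direction requires no lower bound hypothesis and works for every $\nu>1$.

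\textbf{The hard inclusion ($\supseteq$), choice of $\nu$.} Suppose $v \notin V_1$. Write $v = \sum_i v_i$ with $v_i \in V_i$ and let $i_0$ be the largest index with $v_{i_0} \neq 0$; then $i_0 \ge 2$. The idea is that $B_k A^n v$ is, to leading order, $\lambda_{i_0}^n$ times (something comparable to) $B_k$ applied to the $V_{i_0}$-direction of $v$, and then hypothesis \eqref{eq:alpha}, applied to the vector $A^n v$ (or rather its dominant part), forces $\max_k \|P_1 B_k A^n v\| \gg \eta \lambda_{i_0}^n \|v\|$ up to a polynomial correction; applying $A^{-n}$ multiplies the $V_1$-component by roughly $\lambda_1^{-n}$, giving $\max_k\|A^{-n}B_kA^n v\| \gg (\lambda_{i_0}/\lambda_1)^n n^{-C}$. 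Choosing $\nu$ with $1 < \nu < \lambda_2/\lambda_1$ then makes this incompatible with the $O(\nu^n)$ bound, so $v \in W^{min}_A$. The one subtlety to handle carefully — and the place I expect the real work to be — is that \eqref{eq:alpha} is a bound on $\max_k\|P_1B_k w\|$ for all $w$, but after multiplying $v$ by $A^n$ we get $A^n v$ which is dominated by its $V_{i_0}$-part only up to lower-order terms from smaller eigenvalues and from polynomial Jordan factors; one must check these lower-order terms cannot conspire to cancel the main term in $P_1 B_k(A^nv)$ for all $k$ simultaneously. This is cleanly dealt with by normalizing: set $w_n = A^n v / \|A^n v\|$, note $\|A^n v\| \asymp \lambda_{i_0}^n n^{c}$ for some integer $c \ge 0$, apply \eqref{eq:alpha} to $w_n$ to get $\max_k\|P_1 B_k A^n v\| > \eta \|A^n v\| \gg \eta \lambda_{i_0}^n$, and then estimate $\|A^{-n} P_1 B_k A^n v\| \gg \lambda_1^{-n} n^{-C} \|P_1 B_k A^n v\|$ using only the (polynomially bounded) norm of $A^{-n}|_{V_1}$ against $\lambda_1^{-n}$. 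No conspiracy argument is actually needed once one works with $A^n v$ itself rather than with a truncation of it.

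Thus the whole argument reduces to: (1) standard Jordan-form growth estimates for $A^{\pm n}$ on eigenspace blocks, valid up to $\varepsilon$-perturbed exponential rates or up to polynomial factors; (2) a single application of the uniform lower bound \eqref{eq:alpha} to the normalized vectors $w_n$; and (3) choosing $\nu \in (1, \lambda_2/\lambda_1)$ to separate the two cases. I would present it in that order.
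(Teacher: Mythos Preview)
Your argument is correct and follows essentially the same line as the paper's proof. The only cosmetic difference is how the Jordan-block corrections are handled: the paper invokes an adapted norm (as in \cite[Proposition~1.2.2]{Katok-Hasselblatt}) satisfying $(\lambda_i-\rho)\|w\|\le\|Aw\|\le(\lambda_i+\rho)\|w\|$ on each $V_i$ and $\|v_1+v_2\|=\|v_1\|+\|v_2\|$ across blocks, which absorbs the polynomial factors into the small parameter $\rho$ and lets one choose $\nu$ in the interval $\bigl((\lambda_1-\rho)^{-1}(\lambda_1+\rho),\,(\lambda_1+\rho)^{-1}(\lambda_2-\rho)\bigr)$, whereas you carry the polynomial factors $n^{\pm C}$ explicitly and choose $\nu\in(1,\lambda_2/\lambda_1)$. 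The key step---applying \eqref{eq:alpha} directly to $A^nv$ to obtain $\max_k\|P_1B_kA^nv\|>\eta\|A^nv\|$ and then using the $\lambda_1^{-n}$ contraction of $A^{-n}$ on $V_1$---is identical in both.
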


\begin{proof}
For every small $\rho>0$ there exists a norm on $\mathbb{R}^l$ (see \cite[Proposition~1.2.2]{Katok-Hasselblatt})
such that $\|v_1+v_2\|=\|v_1\|+\|v_2\|$ for $v_1$ and $v_2$ in different generalized eigenspaces and
\begin{align*}
(\lambda_i-\rho)\|v\| &\le \|Av\|\le (\lambda_i+\rho)\|v\|,\quad v\in \hbox{im}(P_i).
\end{align*}
The parameter $\rho$ is fixed, but has to be chosen sufficiently small so that
$$
(\lambda_1-\rho)^{-1}(\lambda_1+\rho)<\min_{i>1} (\lambda_1+\rho)^{-1}(\lambda_i-\rho).
$$
It follows from (\ref{eq:alpha}) that
\begin{align*}
\max_k \|A^{-n}B_kA^nv\| &\ge \max_k (\lambda_1+\rho)^{-n}\|P_1 B_kA^nv\|\\
&\ge (\lambda_1+\rho)^{-n}\eta \|A^nv\|\\
&\ge (\lambda_1+\rho)^{-n}\eta \sum_i  (\lambda_i-\rho)^n \|P_iv\|.
\end{align*}
We take $\nu>1$ such that 
$$
\hbox{$\nu<(\lambda_1+\rho)^{-1}(\lambda_i-\rho)$ for $i>1$ and
  $\nu>(\lambda_1-\rho)^{-1}(\lambda_1+\rho)$.}
$$
Then $\max_k \|A^{-n}B_kA^nv\|=O(\nu^n)$ implies that $P_iv=0$ for $i>1$.
Also, for $v\in W^{min}_A$,
\begin{align*}
\max_k \|A^{-n}B_kA^nv\| &\le   (\lambda_1-\rho)^{-n}\left(\max_k \|B_k\|\right ) (\lambda_1+\rho)^n=O(\nu^n).
\end{align*}
This proves the proposition.
\end{proof}

Proposition \ref{p:fs} and (\ref{eq:fs_s}) imply that uniformly on $z,w\in X$, satisfying
$w\in W^{fs}_{f}(z)$ and $d^{fs}(z,w)<\epsilon_0$, and $n\ge 0$,  we have
$$
d(f^{-n}gf^n(z),f^{-n}gf^n(w))\ll \nu^n d(z,w).
$$
Now we take $g=\alpha_1(\delta)$ and $g_0=\alpha_0(\delta)$ for some $\delta\in\Gamma$.
Since the action of $\Gamma$ on $\hbox{Lie}(G)$ is irreducible, $\alpha_0$ is weakly hyperbolic.
Hence, by Theorem \ref{th:H}, the conjugacy map $\Phi$ and its inverse are H\"older with some exponent
$\theta>0$. It follows that
uniformly on $x,y\in X$, satisfying $y\in \mathcal{S}_{\epsilon'}(x)$, and $n\ge 0$,
\begin{align}\label{eq:ff_0}
d(f_0^{-n}g_0f_0^n(x),f_0^{-n}g_0f_0^n(y))
&\ll d(f^{-n}gf^n(\Phi(x)),f^{-n}gf^n(\Phi(y)))^\theta\\
&\ll \nu^{\theta n} d(\Phi(x),\Phi(y))^\theta\nonumber\\
&\ll \nu^{\theta n} d(x,y)^{\theta^2}.\nonumber
\end{align}

Let $\lambda_1<\cdots<\lambda_d$ be the absolute values of the eigenvalues of $Df_0$
and $P_i$ denote the projection from $\hbox{Lie}(G)$ to the
sum of the generalized eigenspaces of $Df_0$ corresponding to $\lambda_i$
along the other generalized eigenspaces. 

We say that a set $\{g_1,\ldots, g_l\}\subset \Aff(X)$ is {\it essential} for $f_0$
if  for some $\eta>0$ and  every $v\in \hbox{Lie}(G)$,
\begin{align*}
\max_k\|P_1(Dg_k)v\|>\eta\|v\|.
\end{align*}
Note this definition does not depend on a choice of the norm.
Existence of essential sets follows from the following lemma:

\begin{lemma}\label{l:ess}
A set $g_1,\ldots,g_l \in \Aff(X)$ is essential if and only if
\begin{equation}\label{eq:star}
\bigcap_{k=1}^l (Dg_k)^{-1}\hbox{\rm ker}(P_1)=0.
\end{equation}
In particular, every subgroup $\Gamma\subset\Aff(X)$ such that $D\Gamma$ 
acts irreducibly on $\hbox{\rm Lie}(G)$ contains an essential set.
\end{lemma}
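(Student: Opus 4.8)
The plan is to first rewrite the definition of ``essential'' as a statement about a single linear map, and then deduce the ``in particular'' clause from irreducibility together with a dimension count. For the equivalence, observe that a tuple $g_1,\dots,g_l\in\Aff(X)$ is essential exactly when the seminorm $q(v):=\max_k\|P_1(Dg_k)v\|$ on $\hbox{\rm Lie}(G)$ is in fact a norm: $q$ is always a seminorm, it is automatically equivalent to the given norm once it is one (the unit sphere of $\hbox{\rm Lie}(G)$ is compact), and $q(v)=0$ holds precisely for $v\in\bigcap_{k}(Dg_k)^{-1}\ker(P_1)$. Hence $q$ is a norm if and only if that intersection is trivial, which is the asserted equivalence. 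This step is routine finite-dimensional linear algebra plus compactness of the sphere.

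For the existence statement, write $V=\hbox{\rm Lie}(G)$, $K=\ker(P_1)$, and $V_g:=(Dg)^{-1}K\subseteq V$ for $g\in\Gamma$. The key observation is that $W:=\bigcap_{g\in\Gamma}V_g$ is $D\Gamma$-invariant: if $w\in W$ and $h\in\Gamma$, then $Dg(Dh\,w)=D(gh)w\in K$ for every $g\in\Gamma$ since $gh\in\Gamma$, so $Dh\,w\in W$. Since $e\in\Gamma$ gives $W\subseteq V_e=K$, and $K\subsetneq V$ because $W^{min}_{f_0}\neq 0$ forces $P_1\neq 0$, the subspace $W$ is a proper $D\Gamma$-invariant subspace; by irreducibility $W=0$.

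Finally, I would pass to a finite subset by a minimality argument: choose a finite $F\subset\Gamma$ minimizing $\dim\bigl(\bigcap_{g\in F}V_g\bigr)$. Adjoining any $g'\in\Gamma$ to $F$ cannot lower this dimension, so $\bigcap_{g\in F}V_g\subseteq V_{g'}$ for all $g'\in\Gamma$, whence $\bigcap_{g\in F}V_g\subseteq W=0$. Enumerating $F=\{g_1,\dots,g_l\}$ and invoking the first part, this is an essential set, which proves the lemma. I do not anticipate a genuine obstacle here; the only mildly delicate point is the $D\Gamma$-invariance of $W$ (which is where $\Gamma$ being a group is used), and the rest is standard linear algebra and a dimension count.
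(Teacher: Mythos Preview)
Your proof is correct and follows essentially the same approach as the paper: the equivalence is obtained by observing that the map $v\mapsto (P_1(Dg_k)v)_k$ is injective precisely when the intersection vanishes (your seminorm formulation is an equivalent rephrasing), and the existence of a finite essential set is deduced from the $D\Gamma$-invariance of $\bigcap_{g\in\Gamma}(Dg)^{-1}\ker P_1$ together with irreducibility. Your argument is more explicit than the paper's (particularly in verifying the invariance and in the dimension-minimality step for passing to a finite subset), but the underlying ideas are identical.
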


Although the group $\Gamma$ in the Main Theorem needs to be finitely generated, this assumption is not
needed in Lemma \ref{l:ess}.

\begin{proof}
Since the map 
$$
v\mapsto (P_1(Dg_k)v:\,k=1,\ldots l):\hbox{Lie}(G)\to\hbox{Lie}(G)^l
$$
is injective when (\ref{eq:star}) holds, one can take
$$
\eta=\min\{\max_k\|P_1(Dg_k)v\|: \|v\|=1\}>0.
$$
The converse is also clear.

To prove the second claim, we observe that there exists a subset 
$\{g_1,\ldots, g_l\}\subset \Gamma$ such that
$$
\bigcap_{k=1}^l (Dg_k)^{-1}\hbox{ker}(P_1)=\bigcap_{g\in\Gamma} (Dg)^{-1}\hbox{ker}(P_1),
$$
and this space is zero by irreducibility.
\end{proof}

The following theorem is the main ingredient of the proof of Theorem \ref{th:ffs}:

\begin{thm}\label{th:quant}
There exists $\nu=\nu(\vartheta,f_0)>1$ such that
given constants $a,\vartheta>0$, a map $f_0\in\Aff(X)$ such that $Df_0$ is semisimple on $W^{min}_{f_0}$,
an essential set $g_1,\ldots,g_l\in\alpha_0(\Gamma)$,
and a family of subsets $\mathcal{L}_\epsilon(x)$, $x\in X$, of $X$ that satisfy
\begin{enumerate}
\item[(i)] $x\in \mathcal{L}_\epsilon(x)\subset B_\epsilon(x)$,
\item[(ii)] $f_0^{-1}(\mathcal{L}_\epsilon(x))\supset \mathcal{L}_\epsilon(f_0^{-1}(x))$,
\item[(iii)] for every $y\in\mathcal{L}_\epsilon(x)$ and $n\ge 0$,
\begin{equation}\label{eq:cb}
\max_k d(f_0^{-n}g_kf_0^{n}(x),f_0^{-n}g_kf_0^{n}(y))\le a\nu^n d(x,y)^\vartheta,
\end{equation} 
\end{enumerate}
one can choose $\epsilon>0$ such that
$$
\mathcal{L}_\epsilon(x)\subset  W^{fs}_{f_0}(x)\quad\hbox{for every $x\in X$}.
$$
\end{thm}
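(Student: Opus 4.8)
The plan is to transfer the statement to $\hbox{Lie}(G)$, use condition (ii) to gain forward invariance, and then run a nonlinear version of the proof of Proposition \ref{p:lin}, with the essential set $g_1,\dots,g_l$ playing the role of \eqref{eq:alpha} and condition (iii) the role of the growth bound ``$O(\nu^n)$''. Fix an adapted norm on $\hbox{Lie}(G)$ as in the proof of Proposition \ref{p:lin}, so that the generalized eigenspaces of $Df_0$ are mutually orthogonal, the norm is additive across them, and $(\lambda_i-\rho)\|w\|\le\|Df_0\,w\|\le(\lambda_i+\rho)\|w\|$ on the $i$-th block, with $\rho>0$ a fixed small parameter. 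Take $\epsilon$ below the injectivity radius of $\exp$; then for $y\in\mathcal{L}_\epsilon(x)$ one may write $y=\exp(v)x$ with $\|v\|\asymp d(x,y)<\epsilon$, and, since $W^{fs}_{f_0}(x)=\exp(W^{min}_{f_0})x$, it suffices to show $v\in W^{min}_{f_0}$, i.e.\ $P_iv=0$ for all $i>1$. Because $f_0$ is affine, $f_0^{-n}g_kf_0^{n}$ is affine and
$$
f_0^{-n}g_kf_0^{n}(y)=\exp\!\big((Df_0)^{-n}(Dg_k)(Df_0)^{n}v\big)\,f_0^{-n}g_kf_0^{n}(x),
$$
so, \emph{as long as the exponent lies in the injectivity ball}, the distance appearing in (iii) is comparable to $\|(Df_0)^{-n}(Dg_k)(Df_0)^{n}v\|$.

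\emph{Forward invariance and the unbounded directions.} Applying (ii) with $x$ replaced by $f_0(x)$ gives $f_0(\mathcal{L}_\epsilon(x))\subset\mathcal{L}_\epsilon(f_0(x))$, hence $f_0^{m}(\mathcal{L}_\epsilon(x))\subset\mathcal{L}_\epsilon(f_0^{m}(x))$ and $d(f_0^{m}(x),f_0^{m}(y))<\epsilon$ for all $m\ge0$. Since $f_0^{m}(y)=\exp((Df_0)^{m}v)f_0^{m}(x)$, if $v$ had a nonzero component along which $\|(Df_0)^{m}(\cdot)\|$ is unbounded (a component in a generalized eigenspace of modulus $>1$, or in a non-semisimple modulus-one block), then $\|(Df_0)^{m}v\|$ would, at some first time, exceed $\epsilon$ yet still lie in the injectivity ball by the one-step bound $\|Df_0\|$, forcing $d(f_0^{m}(x),f_0^{m}(y))\gg\epsilon$ — impossible once $\epsilon$ is below a fixed threshold. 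So $v$ lies in the $Df_0$-invariant subspace on which $\{(Df_0)^{m}\}_{m\ge0}$ is bounded, and then $(Df_0)^{m}v$ stays in the injectivity ball for all $m\ge0$, so that $f_0^{m}(y)=\exp((Df_0)^{m}v)f_0^{m}(x)$ with small exponent; in particular $(Df_0)^{m}v$ is the canonical (small) representative of $f_0^{m}(y)$ over $f_0^{m}(x)$.

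\emph{The remaining directions — the core estimate.} Essentiality of $\{g_k\}$ gives, exactly as in Proposition \ref{p:lin}, for all $n\ge0$,
$$
\max_k\big\|(Df_0)^{-n}(Dg_k)(Df_0)^{n}v\big\|\ \ge\ \eta\,(\lambda_1+\rho)^{-n}\big\|(Df_0)^{n}v\big\|\ \ge\ \eta\sum_{i>1}\kappa_i^{\,n}\|P_iv\|,\qquad\kappa_i=\tfrac{\lambda_i-\rho}{\lambda_1+\rho}>1 .
$$
Choose $\nu=\nu(\vartheta,f_0)>1$ smaller than $\min_{i>1}\kappa_i$. Assume $v\notin W^{min}_{f_0}$ and let $N$ be the first $n$ at which the left-hand side reaches the injectivity radius: the lower bound forces $N<\infty$, while the crude upper bound $\|(Df_0)^{-n}(Dg_k)(Df_0)^{n}v\|\ll\big((\lambda_d+\rho)/(\lambda_1-\rho)\big)^{n}\|v\|$ forces $N$ to be large (of order $\log(1/\|v\|)$). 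For $n<N$ the comparison of the first paragraph applies, so (iii) gives $\sum_{i>1}\|P_iv\|\ll(\nu/\kappa)^{\,n}d(x,y)^{\vartheta}$ for all $n<N$; letting $n\uparrow N$ and inserting the two bounds on $N$ yields a superlinear estimate $\sum_{i>1}\|P_iv\|\ll\|v\|^{\beta}$ for a fixed $\beta=\beta(\vartheta,f_0)>0$. One then bootstraps along the forward orbit: by the previous paragraph the canonical representative of $f_0^{m}(y)$ over $f_0^{m}(x)$ is $(Df_0)^{m}v$, whose $W^{min}_{f_0}$-component decays strictly faster than any component of modulus smaller than $1$ but larger than $\lambda_1$, and whose modulus-one component (if present) persists at constant size; applying the superlinear estimate at $f_0^{m}(y)$ as $m\to\infty$ is then only consistent with $P_iv=0$ for every $i>1$. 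Hence $v\in W^{min}_{f_0}$, i.e.\ $\mathcal{L}_\epsilon(x)\subset W^{fs}_{f_0}(x)$, once $\epsilon$ lies below the finitely many thresholds used above.

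\emph{Main obstacle.} The heart of the matter is the core estimate. In contrast to the linear situation of Proposition \ref{p:lin}, the displacement $(Df_0)^{-n}(Dg_k)(Df_0)^{n}v$ typically grows much faster than $\nu^{n}$ and leaves every injectivity neighbourhood of $\exp$ after only $O(\log(1/\|v\|))$ steps, so the identification of the distance in (iii) with a Lie-algebra norm is available only on a finite window of times; one must bound the length of that window both below (in terms of $\epsilon$, or of $\|v\|$) and above (in terms of the size of the offending component of $v$), control the resulting error terms, and iterate the estimate along forward orbits to pass from ``$\|P_iv\|$ small'' to ``$P_iv=0$''. This is the quantitative, Livsic-type bookkeeping for non-abelian cocycles referred to before the statement, and is where essentially all the work lies; the parameter $\nu$ must be chosen small relative to the spectral gaps of $Df_0$ (and to $\vartheta$) precisely so that the competing exponential rates line up.
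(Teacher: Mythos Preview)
Your overall architecture is sound and close in spirit to the paper's: lift to $\hbox{Lie}(G)$, use essentiality to get a lower bound on $\|(Df_0)^{-n}(Dg_k)(Df_0)^n v\|$, compare with condition (iii) on a finite window of times, and iterate via (ii). Your treatment of the unbounded directions via forward invariance and condition (i) alone is in fact cleaner than the paper's handling of the cases $\lambda_i>1$ and non-semisimple $\lambda_i=1$.

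The gap is in the ``core estimate''. Your choice of the window endpoint $N$ via the \emph{crude} upper bound $\|(Df_0)^{-n}(Dg_k)(Df_0)^n v\|\ll ((\lambda_d+\rho)/\lambda_1)^n\|v\|$ only yields $N\gtrsim \log(1/\|v\|)$; feeding this into $(\nu/\kappa_i)^n$ gives
$$
\|P_iv\|\ \ll\ \|v\|^{\,\vartheta+\tau_i},\qquad \tau_i=\frac{\log(\kappa_i/\nu)}{\log(\lambda_1^{-1})},
$$
and for $\lambda_1<\lambda_i<1$ one has $\tau_i<1$ even as $\nu\to 1^+$, $\rho\to 0^+$ (in fact $\tau_i\to 1-\log\lambda_i/\log\lambda_1<1$). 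So if $\vartheta$ is small --- and $\vartheta$ is given, not chosen --- the exponent $\beta=\vartheta+\tau_i$ may well be $<1$. In that case your bootstrap fails: applying the estimate at $f_0^m(y)$ gives $c_m\|P_iv\|\le C(c_m\|P_iv\|)^\beta$ with $c_m\asymp\lambda_i^m\to 0$, which for $\beta<1$ is vacuous. The word ``superlinear'' is exactly what is needed and exactly what your argument does not deliver.

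The paper's repair is precisely at this point: instead of bounding the window via $\|v\|$, it chooses $n$ in terms of the \emph{bad} part $N(v)=\max_{i>1}\|P_iv\|^{1/\sigma_i}$ (Lemma~\ref{l:cone1}, equation \eqref{eq:mu_d}). This yields the cone estimate $N(v)\le \beta\|v\|^{s}$ with $s=s(\nu,\rho,\vartheta,f_0)\to\infty$ as $\nu\to 1^+$, $\rho\to 0^+$, which is the genuinely superlinear statement your bootstrap would need. Equivalently: your argument could be salvaged by processing the eigenvalues from largest to smallest and, at the $i$-th stage, replacing the crude upper bound by the sharper one $\mathrm{LHS}\ll \|P_1v\|+\sum_{1<j\le i}\mu_j^n\|P_jv\|$ (the $j>i$ terms already killed); this makes the relevant $\tau_i$ equal to $\log(\kappa_i/\nu)/\log\mu_i\to 1$, restoring $\beta>1$. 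That inductive refinement, together with the $f_0^{-1}$-iteration of the cone (Lemma~\ref{l:cone2}), is what the paper carries out.
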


\begin{proof}[Outline of the proof of Theorem \ref{th:quant}] We first observe that
the sets $\mathcal{L}_\epsilon(x)$ lie in ``cones'' around $W^{fs}_{f_0}(x)$
where the size of the cones is controlled by $\nu$
and can be made sufficiently small (Lemma \ref{l:cone1}).
Note that this argument is analogous to the proof of Proposition
\ref{p:lin}, but we can only derive a weaker conclusion because
one has to work in injectivity neighborhoods of the exponential map. 
In the next step, we show that applying the map $f_0^{-1}$, the size of the cones can be made arbitrary small
(Lemma \ref{l:cone2}). This implies the theorem.
\end{proof}

We fix a norm on $\hbox{Lie}(G)$, depending on parameter $\rho>0$, as in the proof of Proposition
\ref{p:lin} with $A=Df_0$.
The parameter $\rho$ has to be chosen sufficiently small.
It controls the size of the cone in  Lemma \ref{l:cone1}.
We always take $\rho>0$ so that
\begin{align*}
&\lambda_i<\lambda_j-\rho \quad\hbox{when $\lambda_i<\lambda_j$,}\\
&\lambda_i-\rho>1 \quad\hbox{when $\lambda_i>1$,}\\
&\lambda_i+\rho<1 \quad\hbox{when $\lambda_i<1$.}
\end{align*}
Note that since $(Df_0)|_{W^{min}_{f_0}}$ is semisimple, we also have
\begin{align*}
\|(Df_0)v\|= \lambda_1\|v\|,\quad v\in \hbox{im}(P_1),
\end{align*}
and
$$
\|(Df_0)^{-n}\|\le \lambda_1^{-n}.
$$
By the assumption on $g_k$'s, there exists $\eta>0$ such that
\begin{align}\label{eq:esss}
\max_k\|P_1(Dg_k)v\|>\eta\|v\|,\quad v\in\hbox{Lie}(G).
\end{align}

Let $\mu_i=\lambda_1^{-1}(\lambda_i+\rho)$ and $\sigma_i=\frac{\log \mu_i}{\log \mu_d}$. 
For $v\in\hbox{Lie}(G)$, we define
$$
N(v)=\max_{i>1}\left\{\|P_iv\|^{\sigma_i^{-1}}\right\}.
$$

For $\beta,s>0$, we define
$$
C(\beta,s)=\{v\in\hbox{Lie}(G):\, N(v)\le \beta \|v\|^s\}.
$$

\begin{lemma} \label{l:cone1}
There exist $\epsilon,\beta>0$
such that for every $x,y\in X$ satisfying $d(x,y)<\epsilon$ and (\ref{eq:cb}),
$$
y\in\exp(C(\beta,s))x.
$$
where $s=s(\nu,\rho,\vartheta,f_0)>0$ is such that  $s\to\infty$ as $\nu\to 1^+$ and $\rho\to 0^+$.
\end{lemma}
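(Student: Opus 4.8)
The plan is to mimic the chain of estimates in the proof of Proposition \ref{p:lin}, but working in an injectivity neighborhood of the exponential map, which forces us to accept a weaker, polynomial-type conclusion (the cone $C(\beta,s)$) in place of the exact subspace $W^{min}_A$. First I would fix $\epsilon>0$ small enough that for all $x\in X$ the map $v\mapsto \exp(v)x$ is a diffeomorphism from a ball of radius slightly larger than $\epsilon$ onto its image, that $d(\exp(v)x,x)$ is comparable to $\|v\|$ there, and that the same holds uniformly after applying any of the finitely many affine maps $g_k$ (since $X$ is compact and there are finitely many $g_k$, all constants can be taken uniform). Write $y=\exp(v)x$ with $\|v\|\asymp d(x,y)<\epsilon$. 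The goal is to bound $N(v)=\max_{i>1}\|P_iv\|^{\sigma_i^{-1}}$ by $\beta\|v\|^s$.

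The key computation is to estimate $\max_k d(f_0^{-n}g_kf_0^n(x),f_0^{-n}g_kf_0^n(y))$ from below, as long as the relevant points stay inside injectivity neighborhoods. As in Proposition \ref{p:lin}, using the adapted norm we have, for $n$ in a suitable range, $f_0^n(y)=\exp((Df_0)^n v)f_0^n(x)$, then applying $g_k$ gives a displacement $\approx (Dg_k)(Df_0)^n v$, and applying $f_0^{-n}$ gives $\approx (Df_0)^{-n}(Dg_k)(Df_0)^n v$, all up to multiplicative constants coming from the distortion of $\exp$ and the $C^1$-size of the maps — valid precisely while $\|(Df_0)^n v\|$ and the subsequent displacements remain below the injectivity radius. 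Projecting onto $\mathrm{im}(P_1)$ and using the essential-set inequality \eqref{eq:esss} together with $\|(Df_0)^{-n}w\|=\lambda_1^{-n}\|w\|$ for $w\in\mathrm{im}(P_1)$ (semisimplicity on $W^{min}_{f_0}$), we get a lower bound
$$
\max_k d(f_0^{-n}g_kf_0^n(x),f_0^{-n}g_kf_0^n(y))\;\gg\; \lambda_1^{-n}\,\eta\,\|(Df_0)^n v\|\;\gg\; \eta\sum_i \mu_i^{\,n}\|P_iv\|,
$$
with $\mu_i=\lambda_1^{-1}(\lambda_i+\rho)$. Comparing with the hypothesis \eqref{eq:cb}, which gives the upper bound $a\nu^n\|v\|^\vartheta$ (up to constants), we obtain for each $i>1$ and each admissible $n$
$$
\mu_i^{\,n}\|P_iv\|\;\ll\; \nu^n\|v\|^\vartheta.
$$
Now choose $n=n(v)$ as large as the injectivity constraint allows, which forces $n\asymp \frac{\log(1/\|v\|)}{\log\mu_d}$ (the slowest-growing relevant rate is $\mu_d$, so $\|(Df_0)^n v\|\lesssim \epsilon_0$ caps $n$ there). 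Substituting this value of $n$ converts the inequality into $\|P_iv\|\ll \|v\|^{\vartheta}(\nu/\mu_i)^n \ll \|v\|^{\vartheta}\,\|v\|^{(\log\mu_i-\log\nu)/\log\mu_d}$, i.e. $\|P_iv\|^{\sigma_i^{-1}}\ll \|v\|^{s}$ with $s=s(\nu,\rho,\vartheta,f_0)$ built from $\vartheta$ and the exponents $\sigma_i^{-1}(\log\mu_i-\log\nu)/\log\mu_d$; taking the max over $i>1$ gives $N(v)\le\beta\|v\|^s$, which is exactly $v\in C(\beta,s)$. One checks from this formula that $s\to\infty$ as $\nu\to 1^+$ and $\rho\to 0^+$, since then each $\log\mu_i$ stays bounded away from $\log\nu\to 0$ while $\sigma_i\to \log\mu_i/\log\mu_d$ stays fixed.

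The main obstacle, as the outline in the excerpt already flags, is the bookkeeping of injectivity neighborhoods: the clean linear identity $f_0^n(\exp(v)x)=\exp((Df_0)^n v)f_0^n(x)$ only holds while $(Df_0)^n v$ is small, and after applying $g_k$ and $f_0^{-n}$ one must repeatedly re-expand in charts and absorb the resulting distortion into the constants, keeping track that the range of valid $n$ is large enough (roughly $n\lesssim \log(1/\|v\|)$) for the argument above to extract a nontrivial exponent $s$. Managing these constraints uniformly in $x\in X$ and over the finitely many $g_k$, and verifying that the admissible $n(v)$ indeed scales like $\log(1/\|v\|)/\log\mu_d$, is where the real work lies; everything else is a transcription of the proof of Proposition \ref{p:lin}.
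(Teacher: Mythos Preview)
Your overall plan --- transcribe Proposition~\ref{p:lin} into exponential charts --- is exactly the paper's, but the choice of $n$ is the whole point, and yours does not deliver $s\to\infty$. Carrying your substitution $\mu_d^{\,n}\asymp 1/\|v\|$ through $\mu_i^{\,n}\|P_iv\|\ll\nu^n\|v\|^\vartheta$ gives $\|P_iv\|^{\sigma_i^{-1}}\ll\|v\|^{\,1+\sigma_i^{-1}(\vartheta-\omega)}$ with $\omega=\log\nu/\log\mu_d$; the worst case $i=d$ (where $\sigma_d=1$) yields only $s=1+\vartheta-\omega$, which stays bounded as $\nu\to1^+$ and $\rho\to0^+$. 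Your closing sentence ``one checks from this formula that $s\to\infty$'' is therefore false with this $n$: the exponents $\sigma_i^{-1}(\log\mu_i-\log\nu)/\log\mu_d=1-\omega/\sigma_i$ are all at most $1$. This matters downstream, since the proof of Theorem~\ref{th:quant} requires $s>\sigma_i^{-1}$ for indices with $\lambda_i=1$, and $\sigma_i^{-1}$ can be arbitrarily large.

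Two issues underlie the gap. First, because $f_0$ and the $g_k$ are \emph{affine}, the identity $h(\exp(v)x)=\exp((Dh)v)\,h(x)$ holds exactly on $G/\Lambda$; there is no ``distortion of $\exp$'' at intermediate stages and no need for $(Df_0)^nv$ to be small. The only injectivity constraint is that the \emph{final} displacement $D(f_0^{-n}g_kf_0^n)v$ have norm below $\delta_0$, so that it can be compared with the distance appearing in (\ref{eq:cb}). Second, and decisively, the paper bounds that final displacement by $c_1\|v\|+c_1\sum_{j>1}\mu_j^{\,n}\|P_jv\|$, which stays below $\delta_0$ whenever $\mu_d^{\,n}\asymp b/N(v)$ --- an $n$ that can be vastly larger than yours precisely when $N(v)\ll\|v\|$. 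With this choice (equivalently: argue by contradiction, assuming $N(v)$ is \emph{not} small relative to $\|v\|^s$, and exhibit an $n$ violating (\ref{eq:cb})), the comparison produces $s=\vartheta/(\omega+\max_{j>1}\sigma_j\gamma_j)$, where $\gamma_j\to0$ as $\rho\to0^+$; now the denominator tends to $0$ and $s\to\infty$ as required. In short: scale $n$ to $N(v)$, not to $\|v\|$.
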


\begin{proof}
Let $c_1= \max_k \|Dg_k\|$.

There exist $\delta_0>0$ and $c_0>1$ such that for every $x\in X$
and $v\in\hbox{Lie}(G)$ satisfying $\|v\|<\delta_0$, we have
\begin{equation}\label{eq:ccc}
c_0^{-1}\|v\|\le d(x,\exp(v)x)\le c_0\|v\|.
\end{equation}
Let  $b>0$ such that $\sum_{j>1} b^{\sigma_j}=\delta_0/(2c_1)$.
We choose $\epsilon>0$ so that 
$d(x,y)<\epsilon$ implies that $y=\exp(v)x$ where 
$$
N(v)<\min\{1,b\}\quad\hbox{and}\quad \|v\|<\min\left\{\delta_0,\delta_0/2c_1\right\}.
$$
Assuming that the claim fails, we will show that there exists $n\ge 0$ such that
\begin{align}\label{eq:cm}
a c_0^{\vartheta+1}\nu^n\|v\|^\vartheta<\max_k\|D(f_0^{-n}g_kf_0^n)v\|<\delta_0.
\end{align}
Since
$$
d(f_0^{-n}g_kf_0^{n}(x),f_0^{-n}g_kf_0^{n}(y))=d(f_0^{-n}g_kf_0^{n}(x),\exp(D(f_0^{-n}g_kf_0^{n})v)f_0^{-n}g_kf_0^{n}(x)),
$$
we deduce from (\ref{eq:ccc}) and (\ref{eq:cm}) that
$$
ac_0^{\vartheta+1}\nu^n(c_0^{-1}\, d(x,y))^\vartheta< \max_k c_0\, d(f_0^{-n}g_kf_0^{n}(x),f_0^{-n}g_kf_0^{n}(y)),
$$
which contradicts (\ref{eq:cb}).

To obtain the upper estimate in (\ref{eq:cm}), we observe that
\begin{align*}
\max_k\|D(f_0^{-n}g_kf_0^n)v\|&\le \lambda_1^{-n}\max_k\|D(g_kf_0^n)v\|\le \lambda_1^{-n}c_1\|D(f_0^n)v\|\\
&\le c_1\|P_1v\| + \lambda_1^{-n} c_1 \sum_{j>1}  (\lambda_j+\rho)^n \|P_jv\|\\
&\le  c_1\|v\|+ c_1\sum_{j>1} \mu_j^n\|P_jv\|.
\end{align*}
We choose $n\ge 0$ so that
\begin{equation}\label{eq:mu_d}
\mu_d^{-1} \frac{b}{N(v)} <\mu_d^n\le \frac{b}{N(v)}.
\end{equation}
Then 
$$
\mu_d^{-\sigma_j} \frac{b^{\sigma_j}}{N(v)^{\sigma_j}}<\mu_j^n\le \frac{b^{\sigma_j}}{N(v)^{\sigma_j}}
$$
and  
\begin{align*}
\max_k\|D(f_0^{-n}g_kf_0^n)v\|\le c_1\|v\|+c_1 \sum_{j>1} b^{\sigma_j}\frac{\|P_jv\|}{N(v)^{\sigma_j}}
<\delta_0.
\end{align*}
The lower estimate in (\ref{eq:cm}) is proved similarly using that $g_1,\ldots,g_l$ is essential (see (\ref{eq:esss})).
Let $\gamma_j>0$ be such that $\lambda_1^{-1}(\lambda_j-\rho)=\mu_j^{1-\gamma_j}$. We have
\begin{align*}
\max_k\|D(f_0^{-n}g_kf_0^n)v\|&\ge \max_k \lambda_1^{-n}\|P_1 D(g_kf_0^n)v\|\ge \lambda_1^{-n}\eta \|D(f_0^n)v\|\\
&\ge \lambda_1^{-n}\eta\left(\lambda_1^n\|P_1v\|+ \sum_{j>1}(\lambda_j-\rho)^n\|P_jv\|\right)\\
&\ge \eta\sum_{j>1} \mu_j^{n(1-\gamma_j)} \left\| P_jv \right\|\\
&\ge \eta \sum_{j>1} (\mu_d^{-1}b)^{\sigma_j(1-\gamma_j)} N(v)^{\sigma_j\gamma_j}\frac{\|P_jv\|}{N(v)^{\sigma_j}}\\
&\ge \eta (\mu_d^{-1}b)^{\sigma_{j_0}(1-\gamma_{j_0})}N(v)^{\sigma_{j_0}\gamma_{j_0}}.
\end{align*}
where $j_0>1$ is such that $\|P_{j_0}v\|^{1/\sigma_{j_0}}=N(v)$.
This implies that
$$
\max_k\|D(f_0^{-n}g_kf_0^n)v\| \ge \min_{j>1} \eta (\mu_d^{-1}b)^{\sigma_{j}(1-\gamma_{j})}N(v)^{\sigma_{j}\gamma_{j}}.
$$
Let $\omega=\frac{\log\nu}{\log\mu_d}$.
It follows from (\ref{eq:mu_d}) that the first inequality in (\ref{eq:cm}) is satisfied provided that
$$
a c_0^{\vartheta+1}N(v)^{-\omega}b^{\omega}\|v\|^\vartheta<\min_{j>1} \eta (\mu_d^{-1}b)^{\sigma_{j}(1-\gamma_{j})}N(v)^{\sigma_{j}\gamma_{j}}.
$$
Since this gives a contradiction, we deduce that
$$
a c_0^{\vartheta+1}b^{\omega}\|v\|^\vartheta\ge \min_{j>1} \eta (\mu_d^{-1}b)^{\sigma_{j}(1-\gamma_{j})}N(v)^{\omega+\sigma_{j}\gamma_{j}}.
$$
Hence,
$$
N(v)\le \beta \|v\|^s
$$
with explicit $\beta>0$ and $s=\vartheta/(\omega+\max_{j>1} (\sigma_j\gamma_j))$.
Clearly, $s\to\infty$ as $\nu\to 1^+$ and $\rho\to 0^+$.
This completes the proof.
\end{proof}

For $i=1,\ldots,d$ and $\delta,\beta,s>0$, we define
$$
C^i_\delta(\beta,s)=\{v\in\hbox{Lie}(G):\, \|v\|<\delta;\, \|P_iv\|^{\sigma_i^{-1}}\le \beta \|v\|^{s};\, P_jv=0, j>i\}.
$$

\begin{lemma}  \label{l:cone2}
For every $\delta,\beta,s>0$,
$$
(Df_0)^{-1} (C^i_\delta(\beta,s))\subset C^i_{\xi\delta}(\rho_i\beta,s).
$$
where $\xi=\max\{1,\|(Df_0)^{-1}\|\}$ and $\rho_i=(\lambda_i-\rho)^{-{\sigma_i^{-1}}}(\lambda_i+\rho)^{s}$.
\end{lemma}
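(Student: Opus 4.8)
The plan is to work in the $\rho$-adapted norm on $\hbox{Lie}(G)$ introduced in the proof of Proposition \ref{p:lin} with $A:=Df_0$, and simply to check the three defining conditions of $C^i_{\xi\delta}(\rho_i\beta,s)$ for the vector $w:=(Df_0)^{-1}v$, where $v\in C^i_\delta(\beta,s)$. The only structural facts needed are: (1) the $P_j$ are the spectral projections of $A$, so they commute with $A$ and with $A^{-1}$, and $A^{-1}$ preserves each $\hbox{im}(P_j)$; (2) the norm is block-additive, $\|\sum_j u_j\|=\sum_j\|u_j\|$ whenever $u_j\in\hbox{im}(P_j)$; and (3) on each block $\hbox{im}(P_j)$ one has $(\lambda_j-\rho)\|u\|\le\|Au\|\le(\lambda_j+\rho)\|u\|$, which inverted gives $(\lambda_j+\rho)^{-1}\|u\|\le\|A^{-1}u\|\le(\lambda_j-\rho)^{-1}\|u\|$ on $\hbox{im}(P_j)$.

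First I would dispose of the two easy conditions. Since $P_j$ commutes with $A^{-1}$, we get $P_j w = A^{-1}P_j v = 0$ for every $j>i$, which is the vanishing condition. For the size condition, $\|w\|=\|A^{-1}v\|\le\|(Df_0)^{-1}\|\,\|v\|\le\xi\|v\|<\xi\delta$, using $\xi\ge\|(Df_0)^{-1}\|$ and $\|v\|<\delta$. Then I would record the two estimates that do the real work. Upper bound on the $i$-component: $P_i w = A^{-1}P_i v\in\hbox{im}(P_i)$, so by (3), $\|P_i w\|\le(\lambda_i-\rho)^{-1}\|P_i v\|$. Lower bound on $\|w\|$: here the membership $v\in C^i$ enters, since it forces $v=\sum_{j\le i}P_j v$; by block-additivity and (3), $\|w\|=\sum_{j\le i}\|A^{-1}P_j v\|\ge\sum_{j\le i}(\lambda_j+\rho)^{-1}\|P_j v\|$, and because $\lambda_j\le\lambda_i$ for $j\le i$ each coefficient is at least $(\lambda_i+\rho)^{-1}$, whence $\|w\|\ge(\lambda_i+\rho)^{-1}\sum_{j\le i}\|P_j v\|=(\lambda_i+\rho)^{-1}\|v\|$.

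Combining these with $v\in C^i_\delta(\beta,s)$, i.e. $\|P_i v\|^{\sigma_i^{-1}}\le\beta\|v\|^s$, gives $\|P_i w\|^{\sigma_i^{-1}}\le(\lambda_i-\rho)^{-\sigma_i^{-1}}\|P_i v\|^{\sigma_i^{-1}}\le(\lambda_i-\rho)^{-\sigma_i^{-1}}\beta\|v\|^s\le(\lambda_i-\rho)^{-\sigma_i^{-1}}(\lambda_i+\rho)^{s}\beta\|w\|^s=\rho_i\beta\|w\|^s$, which together with the two conditions already checked shows $w\in C^i_{\xi\delta}(\rho_i\beta,s)$. There is no serious obstacle in this argument; the only point requiring care is the lower bound $\|w\|\ge(\lambda_i+\rho)^{-1}\|v\|$, which genuinely uses the constraint $P_j v=0$ for $j>i$ built into the definition of $C^i$ — for a component sitting in a block with $\lambda_j>\lambda_i$ the factor $(\lambda_j+\rho)^{-1}$ would be too small and the inequality could fail.
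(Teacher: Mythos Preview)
Your proof is correct and follows essentially the same approach as the paper's own argument: both use the block-additivity of the adapted norm and the eigenvalue bounds $(\lambda_j-\rho)\|u\|\le\|Au\|\le(\lambda_j+\rho)\|u\|$ on $\hbox{im}(P_j)$ to control $\|P_iw\|$ from above and $\|w\|$ from below, then combine with the defining inequality of $C^i_\delta(\beta,s)$. The paper's version is terser (it only writes the chain of inequalities for the main condition and leaves the vanishing and norm bounds implicit), while you spell out all three defining conditions, but the content is the same.
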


\begin{proof}
Let $v\in (Df_0)^{-1} (C^i_\delta(\beta,s))$. Then
\begin{align*}
(\lambda_i-\rho)^{\sigma_i^{-1}}\|P_iv\|^{\sigma_i^{-1}}\le\beta\left(\sum_{j\le i} (\lambda_j+\rho)\|P_jv\|\right)^{s}\le \beta(\lambda_i+\rho)^{s}\|v\|^{s}.
\end{align*}
This implies the lemma.
\end{proof}

\begin{proof}[Proof of Theorem \ref{th:quant}]
We start by setting up notation for the Jordan form of $Df_0$ for $\lambda_i=1$.
It follows from our choice of the norm that there exist linear maps $Q_1,\ldots, Q_{j_0}$ such that
\begin{equation}\label{eq:qq_i}
\|(Df_0^k)v\|=\left\|\sum_{j=0}^{j_0} k^j Q_jv\right\| \quad\hbox{for $k\ge 0$ and $v\in\hbox{im}(P_i)$.}
\end{equation}

Let $s>0$ be as in Lemma \ref{l:cone1}.
Recall that $s\to\infty$ as $\nu\to 1^+$ and $\rho\to 0^+$.
We choose $\rho>0$ and $\nu> 1$ so that
\begin{align*}
& s-\sigma_i^{-1}>0\quad\hbox{when $\lambda_i=1$},\\
& \rho_i:=(\lambda_i-\rho)^{-{\sigma_i^{-1}}}(\lambda_i+\rho)^{s}<1\quad\quad\hbox{when $\lambda_i<1$.}
\end{align*}
Let $\xi\ge 1$ be as in Lemma \ref{l:cone2} and $\beta,\epsilon >0$ as in Lemma \ref{l:cone1}.
Take $\delta\in (0,1)$ such that for $\|v\|<\xi\delta$,
the exponential coordinates $v\mapsto \exp(v)z$, $z\in X$, are one-to-one,
and 
\begin{equation} \label{eq:q_j}
\hbox{$\|Q_j P_i v\|<\beta^{-(s-\sigma_i^{-1})^{-1}}$ when $\lambda_i=1$ and $j=0,\ldots,j_0$.}
\end{equation}
In addition, we assume that $\epsilon$ is sufficiently small so that 
$$
B_\epsilon(x)\subset \exp(\{\|v\|<\delta\})x\quad\hbox{for all $x\in X$.}
$$
Then by Lemma \ref{l:cone1},
\begin{equation}\label{eq:cone}
\mathcal{L}_\epsilon(x)\subset \exp(C(\beta,s)\cap \{\|v\|<\delta\})x\quad\hbox{for every $x\in X$.}
\end{equation}
 In particular,
\begin{equation}\label{eq:cone3}
\mathcal{L}_\epsilon(x)\subset \exp(C^d_{\delta}(\beta,s))x.
\end{equation}

If $\lambda_d\le 1$, we argue as in the following paragraph. Otherwise, we observe that
since $\delta<1$, we have
$$
C^d_{\delta}(\beta,s_1)\subset C^d_{\delta}(\beta,s_2)\quad\hbox{for $s_1>s_2$},
$$
and hence inclusion (\ref{eq:cone}) also holds for $s>0$ such that
$\rho_d=(\lambda_d-\rho)^{-{\sigma_d^{-1}}}(\lambda_d+\rho)^{s}<1$.
Applying $f_0^{-1}$ to (\ref{eq:cone3}), we deduce from Lemma \ref{l:cone2} that 
\begin{equation}\label{eq:rho_d}
\mathcal{L}_\epsilon(x)\subset \exp(C^d_{\xi\delta}(\rho_d\beta,s))x
\end{equation}
for every $x\in X$. Using that the exponential coordinates are one-to-one,
we obtain from (\ref{eq:rho_d}) and (\ref{eq:cone}) that
$$
\mathcal{L}_\epsilon(x)\subset \exp(C^d_{\delta}(\rho_d\beta,s))x.
$$
Repeating this argument, we conclude that
$$
\mathcal{L}_\epsilon(x)\subset \bigcap_{k\ge 1}
\exp(C^d_{\delta}(\rho^k_d\beta,s))x=
\exp(C^d_{\delta}(0,s))x.
$$
Now (\ref{eq:cone}) implies that
$$
\mathcal{L}_\epsilon(x)\subset \exp(C^{d-1}_{\delta}(\beta,s))x.
$$
Applying the same reasoning inductively on $i$, we deduce that
$$
\mathcal{L}_\epsilon(x) \subset
\exp(C^i_{\delta}(0,s))x
$$
provided that $\lambda_i>1$. It follows from (\ref{eq:cone}) that
$\mathcal{L}_\epsilon(x)\subset \exp(C^{i-1}_{\delta}(\beta,s))x$.

Suppose $\lambda _i=1$ and $\mathcal{L}_\epsilon(x) \subset \exp (C^i_\delta(\beta,s) ) x$
for some $\beta>0$. We will show that 
$$
\mathcal{L}_\epsilon (x) \subset \exp  (C^i _\delta(0,s))x.
$$
Applying $f_0^{-1}$, we deduce that
for $y=\exp(v)x\in\mathcal{L}_\epsilon(x)$, $\|v\|<\delta$, and $k\ge 0$, we have  
$$
\|(Df_0^k)P_iv\|^{\sigma_i^{-1}}\le \beta \left(\sum_{j< i} (\lambda_j+\rho)^k\|P_jv\|+ \|(Df_0^k)P_iv\|\right)^{s}.
$$
Using that  $\lambda_j+\rho<1$ for $j<i$ and taking $k\to\infty$, we deduce from (\ref{eq:qq_i}) that
$$
\|Q_{j_0}P_iv\|^{\sigma_i^{-1}}\le \beta \|Q_{j_0}P_iv\|^{s}.
$$
By the choice of $\delta$ (see (\ref{eq:q_j})), $\|Q_{j_0}P_iv\|=0$.
Similar arguments imply that $\|Q_jP_iv\|=0$ for all $j=0,\ldots,j_0$.
Hence, $P_iv=0$ and $\mathcal{L}_\epsilon(x)\subset \exp(C^{i}_{\delta}(0,s))x$.
Combining this estimate with (\ref{eq:cone}), we deduce that
$\mathcal{L}_\epsilon(x)\subset \exp(C^{i-1}_{\delta}(\beta,s))x$.

Now we consider the case when $\mathcal{L}_\epsilon(x)\subset \exp(C^i_{\delta}(\beta,s))$
for some $i$ such that $\lambda_i<1$ and $\beta>0$.
Applying $f_0^{-1}$, it follows from Lemma \ref{l:cone2} that
$$
\mathcal{L}_\epsilon(x)\subset \exp(C^i_{\xi\delta}(\rho_i\beta,s))x\quad\hbox{for every $x\in X$.}
$$
Then it follows from (\ref{eq:cone}) that
$$
\mathcal{L}_\epsilon(x)\subset \exp(C^i_{\delta}(\rho_i\beta,s))x,
$$
and repeating this argument, we deduce that
$$
\mathcal{L}_\epsilon(x)\subset \bigcap_{k\ge 1}
\exp(C^i_{\delta}(\rho^k_i\beta,s))x=
\exp(C^i_{\delta}(0,s))x.
$$
Since the above argument can be applied inductively on $i$,
and we conclude that $\mathcal{L}_\epsilon(x)\subset \exp(C^2_{\delta}(0,s))x$.
This completes the proof.
\end{proof}

\begin{proof}[Proof of Theorem \ref{th:ffs}]
The first claim of Theorem \ref{th:ffs} follows from
Theorem \ref{th:quant} with $\mathcal{L}_\epsilon(x)=\mathcal{S}_{\epsilon'}(x)$
where $\mathcal{S}_{\epsilon'}(x)$ is as in (\ref{eq:se}) with sufficiently small $\epsilon'>0$.
Note that $\alpha_0(\Gamma)$ contains an essential subset by Lemma \ref{l:ess},
and (\ref{eq:cb}) follows from (\ref{eq:ff_0}) where the parameter $\nu$
is close to one if $f$ and $f_0$ are $C^1$-close.

It remains to show that $\Phi$ is bi-H\"older with respect to the metrics $d^{fs}$.
There exists $\epsilon>0$ such that for every $x\in X$, any points $z,w\in X$
lying on the same local leaf of $W^{fs}_{f}$ in $B(x,\epsilon)$ satisfy (\ref{eq:fs_s}).
Let $\delta>0$ be such that $\Phi(B_\delta(y))\subset B_\epsilon(\Phi(y))$ for every $y\in X$.
Consider points $z_0,w_0\in X$ lying on the same leaf of  $W^{fs}_{f_0}$ such that $d^{fs}(z_0,w_0)<\delta$.
Let $\ell$ be a curve from $z_0$ to $w_0$ contained in $W^{fs}_{f_0}(z_0)$ such that $L(\ell)=d^{fs}(z_0,w_0)$.
Then $\Phi(\ell)$ is contained in $B_\epsilon(\Phi(z_0))\cap W^{fs}_f(\Phi(z_0))$. Moreover, since
$\Phi(\ell)$ is connected, $\Phi(\ell)$ is contained in a single local leaf of  $W^{fs}_f$ in
$B_\epsilon(\Phi(z_0))$. Hence, 
$$
d^{fs}(\Phi(z_0),\Phi(w_0))\ll d(\Phi(z_0),\Phi(w_0)).
$$
Since $\Phi$ is H\"older with respect to $d$, this implies that $\Phi$ is H\"older with respect
to $d^{fs}$ as well. The proof that $\Phi^{-1}$ is H\"older with respect to $d^{fs}$ is similar.
\end{proof}

\subsection{Convergence of the sequences $f^{-n}gf^n$} \label{s3}

In this section, we study convergence of the sequence of maps $f^{-n}gf^n$ as $n\to\infty$.


First, we consider the algebraic setting:

\begin{prop}\label{p:alg}
Let $f_0,g_0\in\Aff(X)$ be such that $Df_0:W_{f_0}^{min}\to W_{f_0}^{min}$ is semisimple. Then
\begin{enumerate}
\item Given a sequence $\{m_i\}$ such that
$$
(f_0^{-m_{i}}g_0f_0^{m_{i}})(x)\to y\quad\hbox{as $i\to\infty$}
$$
for some $x,y\in X$, the sequence of maps $f_0^{-m_{i}}g_0f_0^{m_{i}}:{W^{fs}_{f_0}(x)}\to X$
is precompact in the $C^0$-topology.
\item 
There exist a sequence $\{n_i\}$ and a linear map $A:W_{f_0}^{min}\to W_{f_0}^{min}$ such that
if  for some $x,y\in X$ and a subsequence $\{n_{i_j}\}$,
$$
(f_0^{-n_{i_j}}g_0f_0^{n_{i_j}})(x)\to y\quad\hbox{as $j\to\infty$,}
$$
then uniformly on $v\in W^{min}_f$ in compact sets,
$$
(f_0^{-n_{i_j}}g_0f_0^{n_{i_j}})\exp(v)x\to \exp(Av)y\quad\hbox{as $j\to\infty$}.
$$

The map $A$ is nondegenerate provided that $P_{f_0}^{min}Dg_0:W_{f_0}^{min}\to W_{f_0}^{min}$
is nondegenerate.
\end{enumerate}
\end{prop}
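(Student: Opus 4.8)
The plan is to move the whole question onto the Lie algebra $\mathfrak{g}=\hbox{Lie}(G)$, where $f_0^{-n}g_0f_0^n$ becomes a \emph{bounded} linear cocycle. Write $F=Df_0$ and $H=Dg_0$ for the induced automorphisms of $\mathfrak g$. Since an affine map $f$ of $X$ lifts to $L_a\circ\varphi$ with $\varphi$ an automorphism of $G$, one has $f(\exp(v)z)=\exp((Df)v)\,f(z)$ for all $v\in\mathfrak g$, $z\in X$; taking inverses and composing this gives
\[
(f_0^{-n}g_0f_0^n)(\exp(v)z)=\exp\!\big((F^{-n}HF^n)v\big)\cdot(f_0^{-n}g_0f_0^n)(z),\qquad v\in\mathfrak g,\ z\in X,\ n\ge 0 .
\]
Since $W^{fs}_{f_0}(x)=\exp(W^{min}_{f_0})x$, both parts reduce to understanding the restriction of $F^{-n}HF^n$ to $W^{min}_{f_0}$ together with the orbit $(f_0^{-n}g_0f_0^n)(x)$ of a base point --- and the base point orbit is controlled simply by compactness of $X$.

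First I would fix a norm on $\mathfrak g$ adapted to the generalized eigenspace decomposition $\mathfrak g=\bigoplus_{i=1}^d V_i$ of $F$ as in the proof of Proposition \ref{p:lin}, with $V_1=W^{min}_{f_0}$ and eigenvalue moduli $\lambda_1<\cdots<\lambda_d$; using semisimplicity of $F|_{V_1}$ this norm may be taken so that $F|_{V_1}=\lambda_1 R$ with $R\in O(V_1)$ orthogonal, so $F^n|_{V_1}=\lambda_1^n R^n$ with $R^n\in O(V_1)$ for all $n$. For $v\in V_1$, writing $HR^nv=\sum_i P_i(HR^nv)$ and applying $F^{-n}$ yields
\[
F^{-n}HF^nv=R^{-n}\big(P_1HR^nv\big)+\sum_{i>1}\lambda_1^n F^{-n}\big(P_iHR^nv\big),
\]
where the leading term has norm $\le\|P_1H\|\,\|v\|$ and the $i$-th remainder ($i>1$) has norm $\ll\theta_i^n\|v\|$ with $\theta_i:=\lambda_1/(\lambda_i-\rho)<1$ (the parameter $\rho$ being small enough, as in Proposition \ref{p:lin}), all estimates uniform on bounded subsets of $V_1$. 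In particular $\{F^{-n}HF^n|_{V_1}\}_{n\ge 0}$ is a bounded family of linear maps $V_1\to\mathfrak g$.

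For part (1), given $\{m_i\}$ with $(f_0^{-m_i}g_0f_0^{m_i})(x)\to y$, I would pass to a subsequence along which $F^{-m_i}HF^{m_i}|_{V_1}$ converges (possible by the boundedness above) to some linear $A'\colon V_1\to\mathfrak g$; the displayed identity and local uniform continuity of $(\xi,z)\mapsto\exp(\xi)z$ then show that the maps $f_0^{-m_i}g_0f_0^{m_i}$ restricted to $W^{fs}_{f_0}(x)$ converge, uniformly on compact subsets of the leaf, to $\exp(v)x\mapsto\exp(A'v)y$, which is exactly $C^0$-precompactness. For part (2), since each $R^n$ lies in the compact set $O(V_1)$ there are $n_i\to\infty$ and $R_\infty\in O(V_1)$ with $R^{n_i}\to R_\infty$; I set
\[
A:=R_\infty^{-1}\circ\big(P^{min}_{f_0}\,Dg_0|_{W^{min}_{f_0}}\big)\circ R_\infty\ \in\ \hbox{End}(W^{min}_{f_0}),
\]
which depends only on $f_0$ and $g_0$. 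Along this sequence $F^{-n_i}HF^{n_i}|_{V_1}\to A$ uniformly on compact sets, so for any subsequence $\{n_{i_j}\}$ with $(f_0^{-n_{i_j}}g_0f_0^{n_{i_j}})(x)\to y$ the displayed identity gives $(f_0^{-n_{i_j}}g_0f_0^{n_{i_j}})\exp(v)x\to\exp(Av)y$ uniformly for $v$ in compact subsets of $W^{min}_{f_0}$. Finally, as $R_\infty$ is invertible, $A$ is nondegenerate exactly when $P^{min}_{f_0}Dg_0\colon W^{min}_{f_0}\to W^{min}_{f_0}$ is, which is the last claim.

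The step I expect to be the main obstacle --- though it is more careful bookkeeping than a conceptual difficulty --- is the second paragraph: $F^{-n}HF^n$ does not preserve $W^{min}_{f_0}$, because $H$ spreads $W^{min}_{f_0}$ into the larger eigenspaces $V_i$, and one has to check that this leakage is killed, after applying $F^{-n}$, by the spectral gap $\lambda_i/\lambda_1>1$. This is precisely where semisimplicity of $F|_{W^{min}_{f_0}}$ is essential --- it prevents polynomial growth of $F^n|_{W^{min}_{f_0}}$ from defeating the geometric decay --- and where one must keep the bounds uniform on compact sets of $v$, in order to promote pointwise convergence of the cocycle to $C^0$-convergence of the maps along the fast stable leaves.
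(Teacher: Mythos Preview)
Your argument is correct and follows essentially the same route as the paper: reduce via the affine identity $(f_0^{-n}g_0f_0^n)(\exp(v)x)=\exp\big(D(f_0^{-n}g_0f_0^n)v\big)(f_0^{-n}g_0f_0^n)(x)$, write $Df_0|_{W^{min}_{f_0}}=\lambda_1\omega$ with $\omega$ an isometry, split $D(f_0^{-n}g_0f_0^n)v$ into the bounded term $\omega^{-n}P^{min}_{f_0}(Dg_0)\omega^n v$ and a remainder that decays by the spectral gap, and for part (2) choose $\{n_i\}$ so that $\omega^{n_i}$ converges. Your decomposition and error bounds are spelled out in slightly more detail than the paper's, but the content is the same; the formula $A=R_\infty^{-1}(P^{min}_{f_0}Dg_0)R_\infty$ agrees with Remark~\ref{r:alg}.
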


\begin{remark}{\rm \label{r:alg}
If $\dim W_{f_0}^{min}=1$, one can take $n_i=i$ and $A=P_{f_0}^{min}Dg_0$.
In general, $A=\lim_{i\to\infty} \omega^{-n_i} P^{min}_{f_0}(Dg_0) \omega^{n_i}$
for some $\omega\in \hbox{Isom}(W^{min}_{f_0})$.
}
\end{remark}

\begin{proof}
We have
$$
(f_0^{-n}g_0f_0^{n})\exp(v)x=\exp(D(f_0^{-n}g_0f_0^{n}) v)(f_0^{-n}g_0f_0^{n})x.
$$
It follows from the assumption on $f_0$ that
$$
Df_0|_{W_{f_0}^{min}}=\lambda\cdot \omega
$$
where $\lambda>0$ and $\omega$ is an isometry of $W_{f_0}^{min}$.
Then
\begin{align*}
D(f_0^{-n}g_0f_0^{n}) v= ( \omega^{-n} P^{min}_{f_0}(Dg_0) \omega^{n})v+(Df_0)^{-n}P^{max}_{f_0}(Dg_0)\lambda^{n}\omega^{n}v
\end{align*}
where $P^{min}_{f_0}$ denotes the projection on $W^{min}_{f_0}$ and $P^{max}_{f_0}$ denotes
the projection on the sum of eigenspaces complimentary to $W^{min}_{f_0}$. Since
$\omega$ is an isometry, and
$$
(Df_0)^{-n}P^{max}_{f_0}(Dg_0)\lambda^{n}\omega^{n}v\to 0,
$$
it is clear that the sequence of maps $v\mapsto D(f_0^{-n}g_0f_0^{n}) v$ is precompact in $C^0$-topology.
This implies that the sequence $f_0^{-m_{i}}g_0f_0^{m_{i}}|_{W^{fs}_{f_0}(x)}$ is precompact in
$C^0$-topology as well. 

To prove (2), it suffices to choose the sequence $\{n_{i}\}$ so that $\{\omega^{n_{i}}\}$ converges.
This proves the proposition.
\end{proof}

We show that the convergence of $f_0^{-n}g_0f_0^n|_{W^{fs}_{f_0}(x)}$ persists under small perturbations:

\begin{thm}\label{th:compact2}
Let $f_0,g_0\in\Aff(X)$ satisfy
\begin{enumerate}
\item[(i)] The map $f_0$ is partially hyperbolic,
\item[(ii)] The map $Df_0 :W^{min}_{f_0}\to W^{min}_{f_0}$ is semisimple.
\end{enumerate}
Let $f,g\in \Diff(X)$ be $C^1$-small perturbations of $f_0$ and $g_0$
and  $\Phi:X\to X$ a H\"older isomorphism such that
$$
\Phi\circ f_0=f\circ \Phi\quad\hbox{and}\quad \Phi\circ g_0=g\circ \Phi
$$
and 
$$
\Phi(W^{fs}_{f_0}(x))=W^{fs}_{f}(\Phi(x))\quad\hbox{for every $x\in X$.}
$$
Then for every $x\in X$ and a sequence $\{m_i\}$ as in Proposition \ref{p:alg}(1), the sequence of maps
$$
f^{-m_i}gf^{m_i}: W^{fs}_f(x)\to X,\quad i\ge 0,
$$
is precompact in the $C^1$-topology.
\end{thm}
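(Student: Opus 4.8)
\emph{Strategy.} Write $\Psi_n:=f^{-n}gf^n$ and $\bar\Psi_n:=f_0^{-n}g_0f_0^n$, so that $\Psi_n=\Phi\circ\bar\Psi_n\circ\Phi^{-1}$. The plan is to first obtain $C^0$‑precompactness of $\Psi_{m_i}|_{W^{fs}_f(x)}$ by transporting the algebraic picture across $\Phi$, then to bound the first derivatives along the leaves by a Livsic‑type telescoping estimate, after which Arzel\`a--Ascoli on compact subsets of $W^{fs}_f(x)$ gives the conclusion. For the $C^0$ part: by Proposition \ref{p:fs0} the maps $\bar\Psi_n|_{W^{fs}_{f_0}(q)}$ are uniformly Lipschitz, with constant independent of $n$ and of the leaf, hence $C^0$‑equicontinuous; passing to a further subsequence of $\{m_i\}$ we may also assume, by Proposition \ref{p:alg}(2) and Remark \ref{r:alg}, that they converge in $C^1_{\mathrm{loc}}$ on each leaf. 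By Theorem \ref{th:ffs}, $\Phi$ carries the $W^{fs}_{f_0}$‑foliation onto the $W^{fs}_{f}$‑foliation and is bi‑H\"older in both directions with respect to the induced leaf metrics. Composing, the family $\Psi_{m_i}|_{W^{fs}_f(x)}=\Phi\circ\bar\Psi_{m_i}\circ\bigl(\Phi^{-1}|_{W^{fs}_f(x)}\bigr)$ is $C^0$‑equicontinuous on every compact subset of $W^{fs}_f(x)$, and since $X$ is compact this family is precompact in the compact‑open $C^0$‑topology.

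\emph{Reduction of the $C^1$‑bound.} It remains to bound $\|D_z\Psi_{m_i}|_{E^{-}(z)}\|$ uniformly in $i$ and in $z$ over a fixed compact subset of $W^{fs}_f(x)$, where $E^{-}$ is the fast stable subbundle of $f$, and to produce a uniform modulus of continuity for the section $z\mapsto D_z\Psi_{m_i}|_{E^{-}(z)}$. Using $f$‑invariance of $E^{-}$ and the chain rule,
\[
D_z\Psi_n|_{E^{-}(z)}=D_{gf^nz}(f^{-n})\circ D_{f^nz}g\circ D_z(f^n)|_{E^{-}(z)} .
\]
Decompose $D_{f^nz}g\bigl(D_z(f^n)v\bigr)$, $v\in E^{-}(z)$, along the splitting $E^{-}\oplus E^{+}$ at $gf^nz$. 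Exactly as in the proof of Proposition \ref{p:fs}, the $E^{+}$‑component is contracted by $f^{-n}$ at rate $\ll(\mu_0-\epsilon)^{-n}$ while $\|D_z(f^n)v\|\ll(\lambda_0+\epsilon)^n\|v\|$, so for $f$ sufficiently $C^1$‑close to $f_0$ it contributes $\ll\bigl((\lambda_0+\epsilon)/(\mu_0-\epsilon)\bigr)^n\|v\|\to0$. Hence, up to a bounded multiplicative factor coming from the distortion of $Dg$ restricted to $E^{-}$ (bounded because $g$ is $C^1$‑close to the affine $g_0$), $\|D_z\Psi_n|_{E^{-}(z)}\|$ is comparable to the $D(f^n)|_{E^{-}}$‑contraction along the orbit segment $z,fz,\dots,f^{n-1}z$ divided by the same quantity along the orbit segment $\Psi_n(z),f\Psi_n(z),\dots,f^{n-1}\Psi_n(z)$ — the two segments having the same time‑$n$ point up to $g$, since $f^n\Psi_n(z)=gf^nz$.

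\emph{The Livsic estimate (the main obstacle), and conclusion.} Put $\psi(u):=\log\|D_uf|_{E^{-}(u)}\|-\log\lambda_0$. By hypothesis (ii), $Df_0$ is semisimple on $W^{min}_{f_0}$, so $\psi\equiv0$ when $f=f_0$; consequently $\psi$ is H\"older and $C^0$‑small, and $E^{-}$ is H\"older‑close to $W^{min}_{f_0}$, once $f$ is $C^1$‑close to $f_0$. The quantity from the previous step is then governed, schematically, by $\bigl|\sum_{k=0}^{n-1}\psi(f^kz)-\sum_{k=0}^{n-1}\psi(f^k\Psi_n(z))\bigr|$, with, when $\dim W^{min}_{f_0}>1$, an additional isometric ``rotation'' factor coming from $Df_0|_{W^{min}_{f_0}}$ that is handled along the chosen subsequence exactly as in Remark \ref{r:alg}. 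Bounding this difference uniformly in $n$ is the point at which one adapts the proof of Livsic's theorem for H\"older cocycles with non‑abelian targets: one telescopes along the backward $f$‑orbit of $gf^nz$ down to $\Psi_n(z)$, uses the exponential contraction of $f^{-1}$ transverse to $E^{-}$ to make the telescoping errors geometrically summable, and uses the $C^0$‑precompactness established above to keep the relevant orbit geometry under control; the delicate part is that these estimates must be carried out in injectivity neighborhoods of the exponential map rather than linearly, since the crude per‑step bound yields only the useless factor $\nu^n$ with $\nu>1$ of Proposition \ref{p:fs}. Running the same estimate at two nearby points of the leaf and invoking H\"older continuity of $\psi$ and of $E^{-}$ gives the required uniform modulus of continuity for $z\mapsto D_z\Psi_{m_i}|_{E^{-}(z)}$. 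With the uniform $C^1$‑bound and equicontinuity in hand, Arzel\`a--Ascoli on compact subsets of $W^{fs}_f(x)$ shows that every subsequence of $\{\Psi_{m_i}|_{W^{fs}_f(x)}\}$ has a further subsequence converging in $C^1_{\mathrm{loc}}$, necessarily to a $C^1$ map, which is the asserted $C^1$‑precompactness.
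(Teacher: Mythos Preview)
Your $C^0$ step and the reduction to the $E^-$–component of $D_z\Psi_n$ are fine and match the paper. The gap is in the Livsic step for the uniform $C^1$ bound. You propose to bound
\[
\Bigl|\sum_{k=0}^{n-1}\psi(f^kz)-\sum_{k=0}^{n-1}\psi(f^k\Psi_n(z))\Bigr|
\]
by telescoping, but the two orbit segments $\{f^kz\}$ and $\{f^k\Psi_n(z)\}=\{\Psi_{n-k}(f^kz)\}$ do \emph{not} shadow each other: there is no reason for $d\bigl(f^kz,\Psi_{n-k}(f^kz)\bigr)$ to be small, let alone summable in $k$ uniformly in $n$. The ``exponential contraction of $f^{-1}$ transverse to $E^-$'' only controls the $E^+$–component of the displacement between $f^nz$ and $gf^nz$; the $E^-$–component is expanded by $f^{-(n-k)}$, and this is exactly what produces the useless factor $\nu^n$ you mention. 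Invoking $C^0$–precompactness of $\Psi_{m_i}$ does not help here either, since it says nothing about closeness of intermediate orbit points. So as written the argument does not give a uniform bound on $\|D_z\Psi_n|_{E^-}\|$, and the equicontinuity step, which you derive from the same estimate, inherits the problem.

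The paper organizes the argument differently, and the order matters. It first proves an \emph{equicontinuity} estimate (Proposition~\ref{eq:main_est}): for $x,y$ on the \emph{same} leaf $W^{fs}_f$,
\[
\|D(f^{-n}gf^n)_xP_x-D(f^{-n}gf^n)_yP_y\|\ll d^{fs}(x,y)^\vartheta\,\|D(f^{-n}gf^n)_xP_x\|+\delta_n,
\]
with $\delta_n\to0$. This is the place where a Livsic-type telescoping actually works: the forward orbits $\{f^kx\},\{f^ky\}$ shadow because $x,y$ lie on the same fast stable leaf, and the backward pieces $\{f^{-k}gf^nx\},\{f^{-k}gf^ny\}$ are controlled by transporting the algebraic estimate of Proposition~\ref{p:fs0} through the H\"older conjugacy (this is the estimate \eqref{eq:bb}). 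Only \emph{after} equicontinuity does the paper obtain the uniform bound (Proposition~\ref{p:bound}), and not by a cocycle estimate at all: assuming $\|D\Psi_{n_i}|_{E^-}\|\to\infty$ along some $x_i\to x_\infty$, equicontinuity transfers the blow-up to the fixed point $x_\infty$, and then one integrates along a short curve in $W^{fs}_f(x_\infty)$ to produce images whose endpoints separate without bound in $X$, contradicting compactness. In short, your Livsic telescoping is aimed at the wrong pair of orbits, and the boundedness you need does not come from a Birkhoff-sum comparison but from the equicontinuity-plus-compactness argument of Proposition~\ref{p:bound}.
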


Throughout this section, we assume that $X$ is a submanifold of $\mathbb{R}^N$,
which allows us to identify tangent spaces at different points.

We have a H\"older continuous decomposition (cf. (\ref{eq:hyp}))
\begin{equation}\label{eq:split2}
T_x X= E^-_x\oplus E^+_x,\quad x\in X,
\end{equation}
where $E^-_x=T_x W^{fs}_f(x)$.
Let
$$
P_x: T_x X\to E^-_x\quad\hbox{and}\quad P^+_x: T_x X\to E^+_x
$$
denote the corresponding projections.

The following proposition is the main ingredient of the proof of Theorem \ref{th:compact2}.

\begin{prop}\label{eq:main_est}
Let $r>0$. Then under the assumptions of Theorem \ref{th:compact2}, for every $x,y\in X$ satisfying $y\in
W^{fs}_f(x)$ and $d^{fs}(x,y)\le r$,
$$
\|D(f^{-n}gf^n)_xP_x-D(f^{-n}gf^n)_y P_y\|\ll d^{fs}(x,y)^\vartheta\|D(f^{-n}gf^n)_xP_x\|+\delta_n
$$
where $\vartheta>0$ and $\delta_n\to 0$.
\end{prop}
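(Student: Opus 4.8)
The plan is to derive the estimate by a telescoping/cocycle argument along the orbit, treating $f^{-n}gf^n$ as a composition and tracking how the derivative along the fast stable direction at $x$ compares to the one at $y$. The basic identity is
\[
D(f^{-n}gf^n)_x = D(f^{-n})_{gf^n(x)}\circ D(g)_{f^n(x)}\circ D(f^n)_x,
\]
and similarly at $y$. Since $y\in W^{fs}_f(x)$, Proposition \ref{p:fs} (or rather the contraction along fast stable leaves) gives $d^{fs}(f^n(x),f^n(y))\ll\rho^n d^{fs}(x,y)$ with $\rho>\lambda$, so the points $f^n(x)$ and $f^n(y)$ become exponentially close. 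The idea is that the growth of $\|D(f^n)_xP_x\|$ is at worst like $\lambda_+^n$ while the discrepancies introduced by H\"older continuity of the splitting and of $Dg$, $Df$ along the orbit are controlled by $d^{fs}(f^j(x),f^j(y))^\vartheta\ll \rho^{j\vartheta}d^{fs}(x,y)^\vartheta$. Summing a geometric-type series over $j=0,\ldots,n-1$ and being careful that $\rho^\vartheta$ times the norm-distortion ratio stays summable (this is where $\nu$, hence $\rho$, must be chosen close enough to $\lambda$, and where the semisimplicity of $Df_0$ on $W^{min}_{f_0}$ enters so there is no polynomial loss on the slowest direction), one gets the first term $d^{fs}(x,y)^\vartheta\|D(f^{-n}gf^n)_xP_x\|$. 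The residual term $\delta_n$ collects the pieces that decay to $0$ uniformly: these come from the angle between $E^-$ and $E^+$ not being exactly preserved and from the $\|D(f^{-n})\|$-factor acting on the $E^+$-components produced along the way, which are $\ll(\lambda_+/\lambda_-)^{-n}$-small relative to the relevant scale; uniform Denjoy/bounded-distortion type estimates on compact $X$ make $\delta_n$ independent of $x,y$.

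**More concretely**, I would write $D(f^{-n}gf^n)_xP_x - D(f^{-n}gf^n)_yP_y$ as a telescoping sum of $n$ (or $2n$, splitting the $f^n$, $g$, $f^{-n}$ blocks) terms, the $j$-th of which replaces the factor evaluated at the $x$-orbit by the factor evaluated at the $y$-orbit at step $j$, keeping everything to the left along $x$ and everything to the right along $y$. Each such difference is bounded by
\[
\big(\text{product of operator norms to the left}\big)\cdot\big\|D h_{f^j(x)}-Dh_{f^j(y)}\big\|\cdot\big(\text{product to the right restricted to }E^-\big),
\]
where $h$ is the relevant generator ($f$ or $g$). Using H\"older continuity of $x\mapsto Dh_x$ and of the splitting $x\mapsto P_x$, the middle factor is $\ll d(f^j(x),f^j(y))^\vartheta\ll \rho^{j\vartheta}d^{fs}(x,y)^\vartheta$. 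The left product is $\ll \lambda_-^{-(n-j)}$ (from the $f^{-n}$ block) and the right product restricted to $E^-$ is $\ll \lambda_+^{j}$; meanwhile $\|D(f^{-n}gf^n)_xP_x\|$ itself is comparable to $\lambda_-^{-n}\lambda_+^{n}$ up to the uniform constant $\|Dg\|$. Dividing through, each term contributes $\ll (\lambda_+/\lambda_-)^{-(n-j)}\rho^{j\vartheta}\lambda_+^{-j}\cdots$; after reorganizing, the sum over $j$ close to $n$ produces the $d^{fs}(x,y)^\vartheta\|D(f^{-n}gf^n)_xP_x\|$ term, and the sum over small $j$ (where the $y$-orbit has not yet drifted far but the $f^{-n}$-contraction is large) produces $\delta_n\to 0$. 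One also has to account for $D(f^n)_x$ and $D(f^n)_y$ not preserving $E^-$ on the nose — but by $f$-invariance of the splitting \eqref{eq:split2} they do preserve $E^-$ exactly, so $P_{f^j(x)}D(f^j)_xP_x = D(f^j)_xP_x$, which is what keeps the right-hand product under control and is precisely why we insert the projections $P_x,P_y$ in the statement.

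**The main obstacle** I anticipate is the bookkeeping at the ``seam'' between the three blocks $f^n$, $g$, $f^{-n}$: the curve $\ell$ joining $x$ to $y$ in $W^{fs}_f(x)$ gets pushed by $f^n$ to a short curve, then by $g$ to a short curve in a \emph{different} fast stable leaf (using exactly the hypothesis $\Phi(W^{fs}_{f_0})=W^{fs}_f$, via Theorem \ref{th:ffs}, to know $g$ maps fast stable leaves of $f$ into fast stable leaves of $f$, at least approximately at this scale), and then by $f^{-n}$ expanded back. Keeping the estimate uniform requires that all these curves stay inside injectivity neighborhoods and inside the region where \eqref{eq:fs_s} holds, and that the H\"older constants for the splitting and for $Dg,Df$ are genuinely uniform over $X$ — which they are, $X$ being compact and the foliations H\"older with $C^\infty$ leaves. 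A secondary subtlety is that $g_0$ (hence $g$) need not commute with $f_0$, so $g$ does not map $W^{fs}_f(x)$ to a single $W^{fs}_f$-leaf; one handles this by noting $Dg_{f^n(x)}$ maps $E^-_{f^n(x)}$ to a subspace making a uniformly bounded angle with $E^-_{gf^n(x)}$, so applying $D(f^{-n})$ afterwards the $E^+$-component is killed at rate $\ll(\lambda_+/\lambda_-)^{-n}$ and feeds into $\delta_n$ rather than the main term. Once these uniformities are in place, the estimate follows by summing the geometric series, with the exponent $\vartheta$ in the conclusion being (a possibly smaller multiple of) the H\"older exponent of the invariant splitting.
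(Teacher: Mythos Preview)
Your telescoping outline for the forward block $f^n$ is essentially the paper's first step (its operators $A_{n,k}(x,y)$), and the observation that the $E^+$-component produced by $Dg$ contributes only to $\delta_n$ via $(\lambda/\mu)^n$ is also correct. But there are two genuine gaps.

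\emph{First, the backward block.} For the $f^{-n}$-telescoping you must control $d\bigl(f^{-k}gf^n(x),\,f^{-k}gf^n(y)\bigr)$ for $0\le k\le n$. Your sketch only invokes $d(f^j(x),f^j(y))\ll\rho^{j}$, which concerns the forward orbit. Since $g$ does \emph{not} send $W^{fs}_f$-leaves to $W^{fs}_f$-leaves, the points $gf^n(x)$ and $gf^n(y)$ acquire an $E^-$-component of size $\sim\rho^n d^{fs}(x,y)$, and the naive bound under $f^{-k}$ is $(\lambda-\epsilon)^{-k}\rho^n$, which for $k=n$ is $((\lambda-\epsilon)^{-1}\rho)^n\to\infty$ because $\rho>\lambda$. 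The hypothesis $\Phi(W^{fs}_{f_0})=W^{fs}_f$ does \emph{not} say $g$ approximately preserves the leaves; rather, the paper uses $\Phi$ to transfer the question to the \emph{algebraic} system, where Proposition~\ref{p:fs0} gives the clean bound $d(f_0^{-k}g_0f_0^n(\cdot),f_0^{-k}g_0f_0^n(\cdot))\ll\lambda_0^{n-k}$, and then pulls back via the H\"older exponents of $\Phi,\Phi^{-1}$ to obtain
\[
d\bigl(f^{-k}gf^n(x),\,f^{-k}gf^n(y)\bigr)\ll \lambda_0^{\omega(n-k)}\,d^{fs}(x,y)^{\omega_0\omega}.
\]
This is the estimate that makes the backward telescope sum (the paper's operators $D_{n,k}$ and $E_{n,k}$) geometrically convergent with ratio $\nu=(\lambda-\epsilon)^{-1}\lambda\,\lambda_0^{\omega}<1$. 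Without going through the conjugacy you have no such decay.

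\emph{Second, the factor $\|D(f^{-n}gf^n)_xP_x\|$.} You assert this quantity is ``comparable to $\lambda_-^{-n}\lambda_+^{n}$'' and then divide through. It is not: Proposition~\ref{p:bound} (proved \emph{using} the present proposition) shows it is bounded; on the algebraic side it is exactly bounded. So bounding each telescope term by $(\text{left norm})\cdot(\text{H\"older difference})\cdot(\text{right norm})$ and comparing to the crude product $\lambda_-^{-n}\lambda_+^n$ loses the estimate entirely. The paper instead writes each difference in the multiplicative (Livsic-cocycle) form $A_{n,k}-A_{n,k-1}=A_n(x)\,B_{n,k}(x,y)$ and $D_{n,k}-D_{n,k-1}=E_{n,k}(x,y)\,A_{n,n-1}(x,y)$, so that $\|A_n(x)\|$ appears as an honest factor and one only needs $\sum_k\|B_{n,k}\|\ll d^{fs}(x,y)^\theta$ and $\sum_k\|E_{n,k}\|\ll d^{fs}(x,y)^{\theta\omega_0\omega}$. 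This multiplicative structure, together with the auxiliary bound $\|A_{n,k}(x,y)\|\ll(1+c\,d^{fs}(x,y)^\theta)\|A_n(x)\|$ (the paper's estimate on $C_k(x,y)$), is what produces the stated inequality.
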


\begin{proof}
Note that $\Phi$ and $\Phi^{-1}$ are also H\"older with respect to the metrics $d^{fs}$ on the fast
stable leaves of $f_0$ and $f$ (see proof of Theorem \ref{th:ffs}).
By Proposition \ref{p:fs0},
\begin{align*}
d(f_0^{-k}g_0f_0^n(\Phi^{-1}(x)), f_0^{-k}g_0f_0^n(\Phi^{-1}(y))) &\ll \lambda_0^{n-k}
d^{fs}(\Phi^{-1}(x),\Phi^{-1}(y))\\
&\ll\lambda_0^{n-k} d^{fs}(x,y)^{\omega_0}
\end{align*}
where $\omega_0>0$ is the H\"older exponent of $\Phi^{-1}$ with respect to $d^{fs}$.
Then it follows that we have the estimate
\begin{equation}\label{eq:bb}
d(f^{-k}gf^n(x), f^{-k}gf^n(y))\ll \lambda_0^{\omega(n-k)} d^{fs}(x,y)^{\omega_0\omega}
\end{equation}
where $\omega>0$ is the H\"older exponent of $\Phi$ with respect to $d$.

Since the decomposition (\ref{eq:split2}) is $f$-invariant, we have
$$
P_{f(x)}D(f)_xP_x= D(f)_xP_x\quad\hbox{and}\quad
P_{f^{-1}(x)}D(f^{-1})_x P_x= D(f^{-1})_xP_x.
$$
By (\ref{eq:hyp2}),
there exist $\lambda\in (0,1)$ and $\mu>\lambda$ such that
\begin{equation}\label{eq:fast_stable2}
\|D(f^n)_x P_x\|\ll\lambda^n\quad\hbox{and}\quad
\|D(f^{-n})_x P^+_x\|\ll \mu^{-n}
\end{equation}
uniformly on $x\in X$ and $n\ge 0$.
It is crucial for the proof that the map $D(f)_xP_x$ is approximately conformal (cf. assumption (ii) on
$f_0$).
Namely, for some small $\epsilon>0$,
\begin{equation}\label{eq:fast_stablea}
\|D(f^{-n})_x P_x\|\ll (\lambda-\epsilon)^{-n}
\end{equation}
uniformly on $x\in X$ and $n\ge 0$. We also recall that for $\rho> \lambda$
and $x,y\in X$ such that $y\in W^{fs}_f(x)$,
\begin{equation}\label{eq:rhoo}
d^{fs}(f^n(x),f^n(y))\ll \rho^n d^{fs}(x,y).
\end{equation}
Note that the parameter $\epsilon$ in (\ref{eq:fast_stablea}) satisfies
$\epsilon\to 0$ as  $d_{C^1}(f_0,f)\to 0$. 
We assume $f$ is sufficiently close to $f_0$
so that
$$
\zeta:=(\lambda-\epsilon)^{-1}\lambda \rho ^{\theta}<1\quad\hbox{and}\quad
\nu:=(\lambda-\epsilon)^{-1}\lambda\lambda_0^{\omega}<1
$$
where $\theta$ is the H\"older exponent of the map $x\mapsto P_x$.

We have
\begin{align*}
D(f^{-n}gf^n)_x P_x=&D(f^{-n})_{gf^n(x)} P_{gf^n(x)}
D(g)_{f^n(x)}D(f^n)_x P_x\\
&+D(f^{-n})_{gf^n(x)} P^+_{gf^n(x)} D(g)_{f^n(x)}D(f^n)_xP_x.
\end{align*}
It follows from (\ref{eq:fast_stable2}) that
$$
\|D(f^{-n})_{gf^n(x)} P^+_{gf^n(x)} D(g)_{f^n(x)}D(f^n)_xP_x\|\ll \lambda^n\mu^{-n}\to 0.
$$
Hence, to prove the theorem, it suffices to show that for 
$$
A_n(x):=\left(\prod_{i=n-1}^0  D(f^{-1})_{f^{-i}gf^n(x)}\right)
P_{gf^n(x)} D(g)_{f^n(x)}
\left(\prod_{i=n-1}^0  D(f)_{f^i(x)}\right)P_x,
$$
we have
$$
\|A_n(x)-A_n(y)\|\ll d^{fs}(x,y)^\kappa\|A_n(x)\|.
$$

We consider the operators
\begin{align*}
A_{n,k}(x,y):=&\left(\prod_{i=n-1}^0  D(f^{-1})_{f^{-i}gf^n(x)}\right)
P_{gf^n(x)}D(g)_{f^n(x)}\\
&\times
\left(\prod_{i=n-1}^{k+1}  D(f)_{f^i(x)}\right)P_{f^{k+1}(x)}\left(\prod_{i=k}^0  D(f)_{f^i(y)}\right)P_y.
\end{align*}
Note that
\begin{align}\label{eq:n-1}
\|A_n(x)-A_{n,-1}(x,y)\|\le \|A_n(x)\|\cdot \|P_x-P_xP_y\|\ll\|A_n(x)\| d(x,y)^\theta.
\end{align}
Now we estimate $\|A_{n,n-1}(x,y)-A_{n,-1}(x,y)\|$. We use that
\begin{align*}
A_{n,k}(x,y)-A_{n,k-1}(x,y)=A_{n}(x)B_{n,k}(x,y)
\end{align*}
where
\begin{align*}
B_{n,k}(x,y):=&\left(\prod_{i=0}^{k}  D(f)^{-1}_{f^{i}(x)}\right)P_{f^{k+1}(x)}
\left(D(f)_{f^{k}(y)}P_{f^k(y)}-D(f)_{f^{k}(x)}P_{f^k(x)}\right)\\
&\times \left(\prod_{i=k-1}^0  D(f)_{f^i(y)}\right)P_y.
\end{align*}
By (\ref{eq:rhoo}), we have
$$
\|D(f)_{f^{k}(y)}P_{f^k(y)}-D(f)_{f^{k}(x)}P_{f^k(x)}\|\ll
d(f^{k}(x),f^{k}(y))^\theta \ll \rho^{\theta k} d^{fs}(x,y)^\theta,
$$
and by (\ref{eq:fast_stable2}) and (\ref{eq:fast_stablea}),
\begin{align*}
\left\|\left(\prod_{i=k-1}^0  D(f)_{f^i(y)}\right)P_y\right\|&\ll \lambda^k,\\
\left\|\left(\prod_{i=0}^{k}  D(f)^{-1}_{f^{i}(x)}\right)P_{f^{k+1}(x)}\right\|&\ll (\lambda-\epsilon)^{-k-1}.
\end{align*}
Hence,
$$
\|B_{n,k}(x,y)\|\ll \zeta^k d^{fs}(x,y)^\theta
$$
Since $\zeta<1$, it follows that
\begin{align}\label{eq:a}
\|A_{n,n-1}(x,y)-A_{n,-1}(x,y)\|&\le \sum_{k=0}^{n-1}\|A_{n,k}(x,y)-A_{n,k-1}(x,y)\|\\
&\ll \|A_{n}(x)\|d^{fs}(x,y)^\theta.\nonumber
\end{align}

We claim that for some $c>0$ and all $k=-1,\ldots,n-1$,
\begin{equation}\label{eq:c00}
\|A_{n,k}(x,y)\|\ll  (1+ c\,d^{fs}(x,y)^\theta)\cdot \|A_{n}(x)\|.
\end{equation}
Setting
$$
C_k(x,y):= \left(\prod_{i=0}^{k}  D(f)^{-1}_{f^i(x)}\right)P_{f^{k+1}(x)} \left(\prod_{i=k}^0  D(f)_{f^i(y)}\right)P_y,
$$
we have
$$
A_{n,k}(x,y)=A_{n}(x)C_k(x,y).
$$
Now equation (\ref{eq:c00}) will follow from the estimate
$$
\|C_k(x,y)\|\ll 1+c\, d^{fs}(x,y)^\theta.
$$
In fact, we will show that
\begin{equation}\label{eq:c}
\|C_k(x,y)-P_xP_y\|\ll d^{fs}(x,y)^\theta.
\end{equation}
Using (\ref{eq:fast_stable2}) and (\ref{eq:fast_stablea}), we deduce that
\begin{align*}
&\|C_k(x,y)-C_{k-1}(x,y)\|\\
= &
 \left\|
\left( \prod_{i=0}^{k-1} D(f)^{-1}_{f^i(x)} \right)P_{f^{k}(x)}
\left( D(f)^{-1}_{f^{k}(x)} D(f)_{f^k(y)}-id \right)\right.\\
& \quad\quad\left. \times
  \left(\prod_{i=k-1}^0  D(f)_{f^i(y)}\right)P_y
\right\|\\
\ll & (\lambda-\epsilon)^{-k} d(f^k(x),f^k(y))^\theta \lambda^k\ll \zeta^k d^{fs}(x,y)^\theta.
\end{align*}
Since $C_{-1}(x,y)=P_xP_y$ and $\zeta<1$, the last estimate implies (\ref{eq:c}) and (\ref{eq:c00}).

Next, we consider the operators
\begin{align*}
D_{n,k}(x,y):=&\left(\prod_{i=n-1}^{k}  D(f^{-1})_{f^{-i}gf^n(y)}\right)P_{f^{-k}gf^n(y)}
\left(\prod_{i=k-1}^0  D(f^{-1})_{f^{-i}gf^n(x)}\right)\\
& \times P_{gf^n(x)} D(g)_{f^n(x)} P_{f^n(x)}
\left(\prod_{i=n-1}^0  D(f)_{f^i(y)}\right)P_y.
\end{align*}
Using (\ref{eq:bb}), we deduce that
\begin{align*}
\|A_{n,n-1}(x,y)-D_{n,n}(x,y)\|
&\le \|P_{f^{-n}gf^n(x)}-P_{f^{-n}gf^n(y)}P_{f^{-n}gf^n(x)}\|\cdot \|A_{n,n-1}(x,y)\|\\
&\ll d(f^{-n}gf^n(x),f^{-n}gf^n(y))^\theta  \|A_{n,n-1}(x,y)\|\\
&\ll d^{fs}(x,y)^{\theta\omega_0\omega} \|A_{n,n-1}(x,y)\|\\
&\ll d^{fs}(x,y)^{\theta\omega_0\omega} \|A_{n}(x)\|.
\end{align*}
To estimate $\|D_{n,n}(x,y)-D_{n,0}(x,y)\|$, we use the argument similar to the proof
of (\ref{eq:a}). We have 
$$
D_{n,k}(x,y)-D_{n,k-1}(x,y)= E_{n,k}(x,y)A_{n,n-1}(x,y)
$$
where
\begin{align*}
E_{n,k}(x,y):=&\left(\prod_{i=n-1}^{k}  D(f^{-1})_{f^{-i}gf^n(y)}\right)P_{f^{-k}gf^n(y)}\\
 &\times \left(D(f^{-1})_{f^{-(k-1)}gf^n(x)} P_{f^{-(k-1)}gf^n(x)}-
 D(f^{-1})_{f^{-(k-1)}gf^n(y)} P_{f^{-(k-1)}gf^n(y)} \right) \\
&\times \left(\prod_{i=k-1}^{n-1}  D(f^{-1})^{-1}_{f^{-i}gf^n(x)}\right) P_{f^{-n}gf^n(x)} 
\end{align*}
Applying (\ref{eq:fast_stablea}), (\ref{eq:bb}), and (\ref{eq:fast_stable2}), we deduce
that
$$
\|E_{n,k}(x,y)\|\ll \nu^{n-k} d^{fs}(x,y)^{\theta\omega_0\omega}.
$$
Since $\nu<1$, it follows that
\begin{align}\label{eq:step1}
\|D_{n,n}(x,y)-D_{n,0}(x,y)\|
&\le \sum^n_{k=1} \|D_{n,k}(x,y)-D_{n,k-1}(x,y)\|\\
&\ll d^{fs}(x,y)^{\theta\omega_0\omega} \|A_{n,n-1}(x,y)\|\ll d^{fs}(x,y)^{\theta\omega_0\omega} \|A_{n}(x)\|.
\end{align}

Next, we compare the maps $A_n(y)$ and $D_{n,0}(x,y)$:
\begin{align*}
\|A_n(y)-D_{n,0}(x,y)\|
= &\left\|
\left(\prod_{i=n-1}^{0}  D(f^{-1})_{f^{-i}gf^n(y)}\right)P_{gf^n(y)}\right.\\
&\quad\quad\times \left(P_{gf^n(y)} D(g)_{f^n(y)} P_{f^n(y)}-P_{gf^n(x)} D(g)_{f^n(x)} P_{f^n(x)}\right)\\
&\left.\quad\quad\times \left(\prod_{i=n-1}^0  D(f)_{f^i(y)}\right)P_y
\right\|.
\end{align*}
We have
\begin{align*}
\left\|
P_{gf^n(y)} D(g)_{f^n(y)} P_{f^n(y)}-P_{gf^n(x)} D(g)_{f^n(x)} P_{f^n(x)}
\right\|&\ll d(f^n(x),f^n(y))^\theta\\
&\ll \rho^{\theta n} d^{fs}(x,y)^\theta.
\end{align*}
Combining this estimate with (\ref{eq:fast_stable2}) and (\ref{eq:fast_stablea}),
we deduce that
$$
\|A_n(y)-D_{n,0}(x,y)\|\ll \zeta^n d^{fs}(x,y)^\theta.
$$

Finally, the proposition follows from the estimate
\begin{align*}
\|A_n(x)-A_n(y)\|\le&
\|A_n(x)-A_{n,-1}(x,y)\|+\|A_{n,-1}(x,y)-A_{n,n-1}(x,y)\|\\
&+\|A_{n,n-1}(x,y)-D_{n,n}(x,y)\|+
\|D_{n,n}(x,y)-D_{n,0}(x,y)\|\\
&+\|D_{n,0}(x,y)-A_n(y)\|.
\end{align*}
This completes the proof.
\end{proof}

\begin{prop}\label{p:bound}
Let $x_0\in X$ and $r>0$. Then
under the assumptions of Theorem \ref{th:compact2},
$$
\sup\{\|D(f^{-n}gf^n)_xP_x\|:\,\, x\in W^{fs}_f(x_0),\, d^{fs}(x,x_0)\le r,\,\,n\in\mathbb{N}\}<\infty.
$$

\end{prop}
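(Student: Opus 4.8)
The plan is to argue by contradiction. Write $h_n:=f^{-n}gf^n$. If the supremum were infinite, then there would exist points $x_n\in W^{fs}_f(x_0)$ with $d^{fs}(x_n,x_0)\le r$ and, along some subsequence of $n$, $\|D(h_n)_{x_n}P_{x_n}\|\to\infty$. The point is that Proposition \ref{eq:main_est} forces this blow-up to be \emph{coherent}: on a fast stable ball whose radius does \emph{not} shrink with $n$, the leaf direction is stretched by a comparable factor, and moreover in an almost constant direction. Flowing along that direction produces a short curve in the leaf whose $h_n$-image runs essentially straight in $\mathbb{R}^N$ over a length proportional to the stretching factor, contradicting the boundedness of $X\subset\mathbb{R}^N$.

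In detail: at each $x_n$ pick a unit vector $e_n\in E^-_{x_n}=T_{x_n}W^{fs}_f(x_n)$ with $\ell_n:=\|D(h_n)_{x_n}e_n\|$ comparable to $\|D(h_n)_{x_n}P_{x_n}\|$ up to a uniform constant (possible because the angle between $E^-$ and $E^+$ is bounded away from $0$ on $X$); then $\ell_n\to\infty$ and $\ell_n\le\|D(h_n)_{x_n}P_{x_n}\|$. Extend $e_n$ to a vector field $\tilde e_n$, tangent to $X$ and smooth on a neighbourhood of $x_n$ in $\mathbb{R}^N$ of fixed size, with $\tilde e_n(x_n)=e_n$ and first derivatives bounded uniformly in $n$. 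The vector field $y\mapsto P_y\tilde e_n(y)$ on the leaf $W^{fs}_f(x_n)$ is continuous, uniformly bounded, and takes values in the tangent spaces of the leaf, so by Peano's theorem it has an integral curve $\gamma_n\colon[0,t_0]\to W^{fs}_f(x_n)$ with $\gamma_n(0)=x_n$, where $t_0>0$ will be fixed later. For $t_0$ small $\|\dot\gamma_n\|$ stays near $1$, so $\gamma_n([0,t_0])$ lies within $d^{fs}$-distance $2t_0$ of $x_n$; and since $\dot\gamma_n(\tau)=P_{\gamma_n(\tau)}\tilde e_n(\gamma_n(\tau))$ while $P_{x_n}\tilde e_n(x_n)=e_n$, we also get $\|\dot\gamma_n(\tau)-e_n\|\ll d^{fs}(x_n,\gamma_n(\tau))^{\theta}$, where $\theta$ is the H\"older exponent of $x\mapsto P_x$.

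Next, apply Proposition \ref{eq:main_est} with radius parameter $1$ (requiring $2t_0\le 1$), so that its implicit constant is some $C_1$ fixed before $t_0$ is chosen; with $x=x_n$, $y=\gamma_n(\tau)$ this gives
\begin{equation*}
\|D(h_n)_{\gamma_n(\tau)}P_{\gamma_n(\tau)}-D(h_n)_{x_n}P_{x_n}\|\ \ll\ t_0^{\vartheta}\ell_n+\delta_n\qquad(0\le\tau\le t_0).
\end{equation*}
Writing $D(h_n)_{\gamma_n(\tau)}\dot\gamma_n(\tau)=\bigl[D(h_n)_{\gamma_n(\tau)}P_{\gamma_n(\tau)}\bigr]\tilde e_n(\gamma_n(\tau))$ and $D(h_n)_{x_n}e_n=\bigl[D(h_n)_{x_n}P_{x_n}\bigr]e_n$, and using the last display to control the change of operator together with $\|\tilde e_n(\gamma_n(\tau))-e_n\|\ll d^{fs}(x_n,\gamma_n(\tau))\le 2t_0$, one obtains a constant $c_1>0$ independent of $n$ and $t_0$ with
\begin{equation*}
\bigl\|D(h_n)_{\gamma_n(\tau)}\dot\gamma_n(\tau)-D(h_n)_{x_n}e_n\bigr\|\ \le\ c_1\bigl(t_0^{\vartheta}+t_0^{\theta}\bigr)\ell_n+c_1\delta_n\qquad(0\le\tau\le t_0).
\end{equation*}
Now fix $t_0>0$ so small that $c_1\bigl(t_0^{\vartheta}+t_0^{\theta}\bigr)\le\tfrac14$. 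Since $\tau\mapsto h_n(\gamma_n(\tau))$ is $C^1$ with derivative $D(h_n)_{\gamma_n(\tau)}\dot\gamma_n(\tau)$, integrating in $\mathbb{R}^N$ gives
\begin{equation*}
\bigl\|h_n(\gamma_n(t_0))-h_n(\gamma_n(0))\bigr\|\ \ge\ t_0\|D(h_n)_{x_n}e_n\|-t_0\Bigl(\tfrac14\ell_n+c_1\delta_n\Bigr)=t_0\Bigl(\tfrac34\ell_n-c_1\delta_n\Bigr).
\end{equation*}
As $\ell_n\to\infty$ and $\delta_n\to0$ the right-hand side is unbounded, while the left-hand side is at most the diameter of $X$; this contradiction proves the proposition.

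I expect the coherence step to be the real obstacle: a pointwise blow-up of $D(h_n)P$ is by itself harmless, and it yields a contradiction only because Proposition \ref{eq:main_est} controls the oscillation of $D(h_n)P$ along the leaf by a quantity proportional to its value at the base point, with a uniform constant and a uniformly small additive error $\delta_n$ — this is exactly what lets the scale $t_0$ on which the stretching is ``frozen'' be chosen independently of $n$, the remaining geometric constants ($\|P_y\|$, $\theta$, the sizes of injectivity neighbourhoods and plaques) being uniform over the compact $X$. As an alternative organisation, one may first reduce to a single base point — chaining Proposition \ref{eq:main_est} along a path in the leaf from $x_0$ to $x$ bounds the supremum over the $d^{fs}$-ball of radius $r$ by a constant times $\|D(h_n)_{x_0}P_{x_0}\|+\delta_n$ — and then run the flow argument at $x_0$.
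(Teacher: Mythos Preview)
Your proof is correct and follows essentially the same idea as the paper's: argue by contradiction, use Proposition~\ref{eq:main_est} to show the blow-up of $\|D(h_n)P\|$ is coherent over a leaf neighbourhood of fixed size, construct a short curve in the leaf there, and observe that its image under $h_n$ must leave any bounded set in $\mathbb{R}^N$. The only organisational difference is that the paper first passes to a subsequence so that $x_n\to x_\infty$ in the closed $d^{fs}$-ball and the maximising unit vectors converge to some $v_\infty$, then builds a \emph{single} smooth curve $\ell$ at $x_\infty$ tangent to $v_\infty$; this sidesteps the Peano/vector-field construction you use to produce curves at the varying base points $x_n$, but the estimates and the final contradiction are identical.
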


\begin{proof}
Suppose that the claim fails, i.e., there exist sequences $x_i\in W^{fs}_f(x_0)$, $d^{fs}(x_i,x_0)\le r$, 
and $n_i\in\mathbb{N}$, $n_i\to\infty$, such that
$$
\|D(f^{-n_i}gf^{n_i})_{x_i}P_{x_i}\|\to\infty.
$$
Passing to a subsequence, we may assume that $x_i\to x_\infty$ for some $x_\infty\in W^{fs}_f(x_0)$ such that $d^{fs}(x_\infty,x_0)\le r$.
It follows from Proposition \ref{eq:main_est} that
$$
\|D(f^{-n_i}gf^{n_i})_{x_\infty}P_{x_\infty}\|\ge (1-c\cdot
d^{fs}(x_i,x_\infty)^\kappa)
\|D(f^{-n_i}gf^{n_i})_{x_i}P_{x_i}\|-\delta_{n_i}\to\infty.
$$
Let $v_i\in T_{x_\infty}(W^{fs}_{f}(x_\infty))$ with $\|v_i\|=1$ be such that
$$
\|D(f^{-n_i}gf^{n_i})_{x_\infty}P_{x_\infty}\|=\|D(f^{-n_i}gf^{n_i})_{x_\infty}v_i\|.
$$
Passing to a subsequence, we may assume that $v_i\to v_\infty$.
We have
\begin{align*}
\|D(f^{-n_i}gf^{n_i})_{x_\infty}v_\infty\|
&\ge \|D(f^{-n_i}gf^{n_i})_{x_\infty}v_i\|-
\|D(f^{-n_i}gf^{n_i})_{x_\infty}P_{x_\infty}(v_\infty-v_i)\|\\
&\ge \|D(f^{-n_i}gf^{n_i})_{x_\infty}P_{x_\infty}\|\cdot
(1-\|v_\infty-v_i\|).
\end{align*}
Hence, for sufficiently large $i$, we have
\begin{align*}
\|D(f^{-n_i}gf^{n_i})_{x_\infty}v_\infty\|\ge \frac{1}{2}\|D(f^{-n_i}gf^{n_i})_{x_\infty}P_{x_\infty}\|.
\end{align*}
Let $\alpha_n=\|D(f^{-n}gf^{n})_{x_\infty}v_\infty\|$. Note that $\alpha_{n_i}\to\infty$.

Fix small $\epsilon>0$. Let $x\in W^{fs}_f(x_\infty)$ be such that $d^{fs}(x,x_\infty)<\epsilon$ and
$v\in T_x W^{fs}_f(x)$ such that $\|v-v_\infty\|<\epsilon$. We have 
\begin{align*}
\|D(f^{-n}gf^n)_x v-D(f^{-n}gf^n)_{x_\infty} v_\infty\|\le& \|D(f^{-n}gf^n)_xP_x
v-D(f^{-n}gf^n)_{x_\infty}P_{x_\infty} v\|\\
&+ \|D(f^{-n}gf^n)_{x_\infty}P_{x_\infty} v-D(f^{-n}gf^n)_{x_\infty}P_{x_\infty} v_\infty\|\\
\ll& d^{fs}(x,x_\infty)^\kappa \|D(f^{-n}gf^n)_{x_\infty}P_{x_\infty}\|
+\delta_n\\
&+\|D(f^{-n}gf^n)_{x_\infty}P_{x_\infty}\|\cdot\|v-v_\infty\|\\
\ll& (\epsilon^\kappa\alpha_n+\delta_n)+\epsilon\alpha_n.
\end{align*}
For some $\rho=\rho(\epsilon)>0$, there exists a smooth curve $\ell:[0,\rho]\to W^{fs}_{f}(x_\infty)$
 such that 
\begin{align*}
&\ell(0)=x_\infty,\quad \ell'(0)=v_\infty,\quad \ell'(t)\in T_{\ell(t)}W^{fs}_{f}(\ell(t)),\\
&\hbox{diam}(\ell([0,\rho]))<\epsilon,\quad\quad \|\ell'(t)-\ell'(0)\|<\epsilon.
\end{align*}
We consider the sequence of curves $\ell_n=(f^{-n}gf^n)\ell$.
Note that $\|\ell_n'(0)\|=\alpha_n$, and it follows from the previous computation that,
choosing $\epsilon$ sufficiently small, 
$$
\|\ell_{n_i}'(t)-\ell_{n_i}'(0)\|\le \frac{1}{3} \|\ell_{n_i}'(0)\|
$$
for all $t\in [0,\rho]$ and sufficiently large $i$.
Since $\|\ell'_{n_i}(0)\|\to\infty$, it follows that
the distance between $\ell_{n_i}(0)$ and $\ell_{n_i}(\rho)$ in the ambient
Eucledean space goes to infinity as $i\to\infty$. This contradiction
proves the proposition.
\end{proof}

\begin{proof}[Proof of Theorem \ref{th:compact2}]
By Proposition \ref{p:alg}(1), the maps $\{f^{-m_i}gf^{m_i}|_{W^{fs}_f(x)}\}$
are precompact in $C^0$-topology.
Then it follows from Proposition \ref{p:bound} that the maps $\{f^{-m_i}gf^{m_i}|_{W^{fs}_f(x)}\}$
are uniformly bounded in the $C^1$-topology. Also, combining Proposition \ref{eq:main_est} and
Proposition \ref{p:bound}, we obtain that for every $z$ and $w$ in a compact neighborhood of $x$ in $W^{fs}_f(x)$,
$$
\|D(f^{-m_i}gf^{m_i})_zP_z-D(f^{-m_i}gf^{m_i})_w P_w\|\ll d^{fs}(z,w)^\kappa+\delta_{m_i}.
$$
Since $\delta_{m_i}\to 0$, it follows that the maps
$\{f^{-m_i}gf^{m_i}|_{W^{fs}_f(x)}\}$  are equicontinuous in the $C^1$-topology. This implies the theorem.
\end{proof}

\subsection{H\"older implies $C^1$ along fast stable manifolds}\label{s4}

\begin{thm}\label{th:ci}
Let $f_0,g_0\in\Aff(X)$ be a good pair, $f,g\in \Diff(X)$ $C^1$-small perturbations of $f_0,g_0$,
and  $\Phi:X\to X$ a H\"older isomorphism such that
$$
\Phi\circ f_0=f\circ \Phi\quad\hbox{and}\quad \Phi\circ g_0=g\circ \Phi,
$$
and
$$
\Phi(W^{fs}_{f_0}(x))= W^{fs}_{f}(\Phi(x))\quad\hbox{for all $x\in X$.}
$$
Then for a.e. $x\in X$, the maps  $\Phi|_{W^{fs}_{f_0}(x)}$ and $\Phi^{-1}|_{W^{fs}_{f}(\Phi(x))}$
are $C^1$-diffeomorphisms.
\end{thm}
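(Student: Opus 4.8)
The plan is to realize $\Phi$ restricted to a fast stable leaf as an intertwiner between an affine transitive action on the algebraic side and a $C^1$ transitive action on the perturbed side, and then invoke a smoothing principle. Fix once and for all the sequence $\{n_i\}$ and the linear map $A\colon W^{min}_{f_0}\to W^{min}_{f_0}$ produced by Proposition \ref{p:alg}(2); by property (iii) of a good pair, $A$ is nondegenerate. Let $\Omega\subset X$ be the full-measure set on which property (iv) holds for this sequence, so that for $x\in\Omega$ the orbit $\{f_0^{-n_i}g_0f_0^{n_i}(x)\}$ is dense in $X$. Fix $x\in\Omega$, write $L=W^{fs}_{f_0}(x)$, and let $m=\dim W^{min}_{f_0}$. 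Given $y\in X$, density lets us pass to a subsequence $\{n_{i_j}\}$ with $f_0^{-n_{i_j}}g_0f_0^{n_{i_j}}(x)\to y$; then Proposition \ref{p:alg}(2) gives that $f_0^{-n_{i_j}}g_0f_0^{n_{i_j}}|_L$ converges uniformly on compacta to $G_{x,y}\colon \exp(v)x\mapsto \exp(Av)y$, a $C^\infty$-diffeomorphism of $L$ onto $W^{fs}_{f_0}(y)$ (here $A$ nondegenerate is used). By Theorem \ref{th:compact2}, after a further subsequence the maps $f^{-n_{i_j}}gf^{n_{i_j}}|_{W^{fs}_f(\Phi(x))}$ converge in the $C^1$-topology to a map $H_{x,y}\colon W^{fs}_f(\Phi(x))\to W^{fs}_f(\Phi(y))$, and passing to the limit in $\Phi\circ(f_0^{-n}g_0f_0^n)=(f^{-n}gf^n)\circ\Phi$ yields
\[
\Phi\circ G_{x,y}=H_{x,y}\circ\Phi\qquad\text{on }L .
\]
Thus $H_{x,y}$ is a homeomorphism which is moreover $C^1$; running the same construction with $g_0^{-1}$ in place of $g_0$ (and arranging the subsequence so that the relevant base points track correctly) shows $H_{x,y}^{-1}$ is likewise a $C^1$-limit, so $H_{x,y}$ is a $C^1$-diffeomorphism.

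Next I would exploit density to force transitivity. Since $L\subset X=\overline{\{f_0^{-n_i}g_0f_0^{n_i}(x)\}}$, the construction above applies to every $y\in L$, and then $G_{x,y}$ and $H_{x,y}$ are self-maps of $L$ and of $\Phi(L)=W^{fs}_f(\Phi(x))$ respectively. In the chart $v\mapsto\exp(v)x$ the maps $G_{x,y}$, $y\in L$, all have linear part $A$, so the group they generate (together with inverses) contains, in suitable coordinates, the full translation group of $W^{min}_{f_0}\cong\mathbb R^m$ --- one obtains a translation as $G_{x,y_1}\circ G_{x,y_2}^{-1}$ with the linear parts $A$ cancelling --- and this group already acts transitively on $L$. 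Writing $\tau_w$ for the translation by $w$, we have $\tau_w=G_{x,y_1}\circ G_{x,y_2}^{-1}$ for appropriate $y_1,y_2\in L$, so $\sigma_w:=\Phi\circ\tau_w\circ\Phi^{-1}=H_{x,y_1}\circ H_{x,y_2}^{-1}$ is a $C^1$-diffeomorphism of $\Phi(L)$; moreover $w\mapsto\sigma_w$ is a continuous, free, transitive action of $\mathbb R^m$ on the $C^1$-manifold $\Phi(L)$ by $C^1$-diffeomorphisms, and $\Phi\circ\tau_w=\sigma_w\circ\Phi$ on $L$. This is exactly the situation alluded to in the outline of the Main Theorem: $\Phi|_L$ conjugates a transitive smooth action to a transitive $C^1$-action.

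To finish, apply the Bochner--Montgomery theorem: a continuous action of a Lie group by $C^1$-diffeomorphisms of a $C^1$-manifold is jointly $C^1$. Hence the orbit map $w\mapsto\sigma_w(\Phi(x))$ is $C^1$. But $\sigma_w(\Phi(x))=\Phi(\tau_w(x))=\Phi(\exp(w)x)$, so $w\mapsto\Phi(\exp(w)x)$ is $C^1$; since $w\mapsto\exp(w)x$ is a $C^\infty$-chart of $L$, this says precisely that $\Phi|_L$ is $C^1$. Freeness of the action makes the orbit map a $C^1$ local diffeomorphism, so $D(\Phi|_L)$ is everywhere nonsingular, and the inverse function theorem then gives that $\Phi|_L$ and $\Phi^{-1}|_{W^{fs}_f(\Phi(x))}$ are $C^1$-diffeomorphisms. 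Since $\Omega$ is full measure and these are statements about the leaf through $x$, the theorem follows. (Alternatively one can bypass Bochner--Montgomery: transitivity lets one transport differentiability at a single point of $L$ --- obtained from the contraction/expansion behaviour of a suitable limit self-map --- to all of $L$, continuity of the derivative coming from the $C^1$-compactness in Theorem \ref{th:compact2}.)

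The main obstacle sits in the first two paragraphs: extracting genuine $C^1$-\emph{diffeomorphisms} rather than mere $C^1$ maps as limits, which forces one to control the inverse maps and to track base points carefully along subsequences --- delicate because $f^{-n}gf^n$ converges only along leaves and diverges transversally --- and verifying that the algebraic limit maps really generate a transitive group on each leaf; this is where the case $\dim W^{min}_{f_0}>1$ requires the subsequence refinement in Proposition \ref{p:alg} to absorb the rotation component of $(Df_0)|_{W^{min}_{f_0}}$. Once these are secured, the passage from a topological conjugacy between transitive actions to a $C^1$-conjugacy is comparatively soft.
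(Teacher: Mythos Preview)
Your overall architecture matches the paper's proof: construct the limit maps $G_{x,y}$ and $H_{x,y}$, show they intertwine via $\Phi$, extract from them a transitive $\mathbb{R}^m$-action on $\Phi(L)$ by $C^1$-diffeomorphisms, and apply Bochner--Montgomery. The Bochner--Montgomery step and the freeness argument for nonsingularity of $D\Phi|_L$ are exactly what the paper does.

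The genuine gap is the step you yourself flag: upgrading $H_{x,y}$ from a $C^1$ homeomorphism to a $C^1$-\emph{diffeomorphism}. Your proposed fix --- run the construction with $g_0^{-1}$ --- does not work as stated. First, property~(iv) of a good pair gives density of $\{f_0^{-n_i}g_0f_0^{n_i}(x)\}$ for a.e.\ $x$; it says nothing about $\{f_0^{-n}g_0^{-1}f_0^n(y)\}$, and the base point $y$ you would need lies on the leaf $L$, a set of measure zero. Second, even granting the density, the limit linear part produced by Proposition~\ref{p:alg}(2) for $g_0^{-1}$ is $A'=\lim_i \omega^{-m_i}P^{min}_{f_0}(Dg_0^{-1})\omega^{m_i}$, which in general differs from $A^{-1}$, so the resulting $C^1$ map is not $H_{x,y}^{-1}$ but some other map; you would still have to argue nondegeneracy.

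The paper avoids $g_0^{-1}$ entirely and instead proves $D(H_{x,y})_z\neq 0$ directly by a composition trick. In your notation: suppose $D(H_{x,y_0})_{z_0}=0$ for some $y_0\in L$, $z_0\in\Phi(L)$. On the algebraic side one has, for every $y\in L$, an identity $G_{x,y}^{(2)}=G_{x,y_1}\circ G_{x,y_0}$ for a suitable $y_1\in L$, where $G_{x,y}^{(k)}\colon\exp(v)x\mapsto\exp(A^kv)y$; conjugating by $\Phi$ gives $D(H^{(2)}_{x,y})_{z_0}=0$ for \emph{all} $y\in L$. Iterating once more (and using that for every $z$ there is $y_z$ with $H_{x,y_z}(z)=z_0$), one obtains $D(H^{(3)}_{x,y_2})_z=0$ at \emph{every} $z\in\Phi(L)$ for a fixed $y_2$, contradicting that $H^{(3)}_{x,y_2}$ is a homeomorphism. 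This algebraic bootstrapping is the missing ingredient; once you have it, your argument goes through exactly as written.
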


\begin{proof}
Fix a sequence $\{n_i\}$ and $A\in\hbox{GL}(W^{min}_{f_0})$ as in Proposition \ref{p:alg}(2).
For a set of $x\in X$ of full measure,
the sequence $\{f_0^{-n_i}g_0f_0^{n_i}(x)\}$  is dense in $X$.
In particular, for a.e. $x\in X$ and every $y\in W^{fs}_{f_0}(x)$, there exists a
subsequence $\{n_{i_j}\}$ such that $f_0^{-n_{i_j}}g_0f_0^{n_{i_j}}(x)\to y$.
Then by Proposition \ref{p:alg}(2), for every $v\in  W^{min}_{f_0}$,
\begin{equation}\label{eq:con}
f_0^{-n_{i_j}}g_0f_0^{n_{i_j}}(\exp(v)x)\to \exp (A v)y
\end{equation}
uniformly on compact sets.

For $k\in\mathbb{N}$ and $y\in W^{fs}_{f_0}(x)$, we consider maps
\begin{align*}
&\rho^0_{k,y}:W^{fs}_{f_0}(x)\to W^{fs}_{f_0}(x): \exp(v)x\mapsto
\exp(A^kv)y,\\
&\rho^1_{k,y}:W^{fs}_{f}(\Phi(x))\to W^{fs}_{f}(\Phi(x)): \Phi(\exp(v)x)\mapsto
\Phi\left(\exp(A^kv)y\right),
\end{align*} 
where $v\in W^{min}_{f_0}$. Note that
\begin{equation}\label{eq:conj0}
\rho^0_{k,y}=\Phi^{-1}\circ\rho^1_{k,y}\circ\Phi.
\end{equation}
In particular, it follows that $\rho^1_{k,y}$ is a homeomorphism, and
by (\ref{eq:con}),
$$
\rho^1_{1,y}=\lim_{j\to\infty} (f^{-n_{i_j}}gf^{n_{i_j}})|_{W^{fs}_f(\Phi(x))}.
$$
in the $C^0$-topology. By Theorem \ref{th:compact2}, 
the sequence of maps $(f^{-n_{i_j}}gf^{n_{i_j}})|_{W^{fs}_f(\Phi(x))}$ is precompact
in the $C^1$-topology. Hence, there exists a
subsequence which converges in the $C^1$-topology, and
$\rho^1_{1,y}$ is a $C^1$-map for every $y\in W^{fs}_{f_0}(x)$.
Since each map $\rho_{k,y}^1$, $k\ge 1$, is a composition of maps $\rho_{1,z}^1$, it is also $C^1$.

Next, we show that
\begin{equation}\label{eq:nonzero}
D(\rho^1_{1,y})_z\ne 0\quad\hbox{for every $y\in W^{fs}_{f_0}(x)$ and $z\in W^{fs}_f(\Phi(x))$.}
\end{equation}
Suppose that, to the contrary, $D(\rho^1_{1,y_0})_{z_0}= 0$ for some $y_0\in W^{fs}_{f_0}(x)$ and $z_0\in
W^{fs}_f(\Phi(x))$. 
We claim that for every $y\in W^{fs}_{f_0}(x)$, there exists $y_1\in W^{fs}_{f_0}(x)$ such
that
\begin{equation}\label{eq:rho}
\rho^0_{2,y}=\rho^0_{1,y_1}\rho^0_{1,y_0}.
\end{equation}
Indeed, if we write $y=\exp(v)x$, $y_1=\exp(v_1)x$, $y_0=\exp(v_0)x$
for some $v,v_1,v_0\in W^{min}_{f_0}$, then (\ref{eq:rho}) is equivalent to
$$
A^2w+v=A(Aw+v_0)+v_1,\quad w\in W^{min}_{f_0},
$$ 
and we can take $v_1=v-A v_0$.
Now by (\ref{eq:conj0}) and (\ref{eq:rho}),
$$
\rho^1_{2,y}=\rho^1_{1,y_1}\rho^1_{1,y_0}.
$$
Hence, $D(\rho^1_{2,y})_{z_0}= 0$ for every $y\in W^{fs}_{f_0}(x)$.
Similarly, using (\ref{eq:conj0}), we deduce that for every $z\in W^{fs}_f(\Phi(x))$,
there exists $y_z\in W^{fs}_{f_0}(x)$ such that $\rho^1_{1,y_z}(z)=z_0$.
If we fix $y_2\in W^{fs}_{f_0}(x)$, there exists $y_z'\in W^{fs}_{f_0}(x)$ such that
$$
\rho^1_{3,y_2}=\rho^1_{2,y_z'}\rho^1_{1,y_z}.
$$
Then we have
$$
D(\rho^1_{3,y_2})_z= 0\quad\hbox{for every $z\in W^{fs}_{f}(\Phi(x))$.}
$$
This contradicts the map $\rho^1_{3,y_2}$ being a homeomorphism, and (\ref{eq:nonzero}) follows.
We have proved that $\rho^1_{1,y}$ is a $C^1$-diffeomorphism for every $y\in W^{fs}_{f_0}(x)$.
This implies that the map $\rho^1_{0,y}$,
which can be represented as a composition of $\rho^1_{1,z_1}$ and $(\rho^1_{1,z_2})^{-1}$,
is also a $C^1$-diffeomorphism for every $y\in W^{fs}_{f_0}(x)$.

We have a free transitive action of $W^{min}_{f_0}$
on $W^{fs}_f(\Phi(x))$ defined by
\begin{equation}\label{eq:s}
s(v,\Phi(\exp(w)x))= \Phi(\exp(v+w)x)
\end{equation}
where $v,w\in W^{min}_{f_0}$. 
Note that the action $s:W^{min}_{f_0}\times W^{fs}_f(\Phi(x))\to W^{fs}_f(\Phi(x))$
is continuous, and since
$$
s(v,\Phi(\exp(w)x))=\rho^1_{0,\exp(v)x}(\Phi(\exp(w)x)),
$$
the map $s(v,\cdot)$ is a $C^1$-diffeomorphism for every $v\in W^{min}_{f_0}$.
Hence, by the Bochner--Montgomery theorem \cite{bm}, the map $s$ is $C^1$.
Now it follows from (\ref{eq:s}) that the map $\Phi_x(v):=\Phi(\exp(v)x)$ is $C^1$.

Suppose that for some $v\in W^{min}_{f_0}$, we have $\phi'(0)=0$ where $\phi(t)=\Phi(\exp(tv)x)$.
Then since $\phi(t_1+t)=\rho^1_{0,\exp(t v)x}(\Phi(\exp(t_1v)x))$, it follows that
$\phi'(t)=0$ for every $t\in \mathbb{R}$.
This contradicts the action $s$ being free. Hence, we conclude that
$D(\Phi_x)_0$ is nondegenerate, and because
$$
\Phi(\exp(v+w)x)=\rho^1_{0,\exp(v)x}(\Phi(\exp(w)x)),
$$
$D(\Phi_x)_v$ is nondegenerate  for every $v\in W^{min}_{f_0}$.
This shows that $\Phi|_{W^{fs}_{f_0}(x)}$ is a $C^1$-diffeomorphism for a.e. $x\in X$.
\end{proof}

\subsection{Completion of the proof of the main theorem }\label{s5}

Let $\{f_0,g_0\}\subset\alpha_0(\Gamma)$ be a good pair and $\{f,g\}\subset\alpha_1(\Gamma)$
its conjugate under $\Phi$. 
We use notation $A$, $\{n_i\}$, $\omega$ as in Proposition  \ref{p:alg} and Remark \ref{r:alg}.
Recall that 
$$
A=\lim_{i\to\infty} \omega^{-n_i} P^{min}_{f_0}(Dg_0)\omega^{n_i}.
$$
Hence, replacing the pair $\{f_0,g_0\}$ by the pair
$\{f_0,f_0^lg_0\}$ for some $l \ge 1$, we can get a good pair with $A$ satisfying $\|A\|<1$, which we now assume.

By Theorem \ref{th:ffs}, $\Phi(W^{fs}_{f_0}(x))=W^{fs}_{f}(\Phi(x))$ for all $x\in X$,
so we consider the maps
\begin{align*}
&\alpha^0_{x}:W^{fs}_{f_0}(x)\to W^{fs}_{f_0}(x): \exp(v)x\mapsto
\exp(Av)x,\\
&\alpha_{x}:W^{fs}_{f}(\Phi(x))\to W^{fs}_{f}(\Phi(x)): \Phi(\exp(v)x)\mapsto
\Phi\left(\exp(Av)x\right),
\end{align*}
where $v\in W^{min}_{f_0}$. Note that
\begin{equation}\label{eq:conj}
\Phi\circ \alpha^0_{x}=\alpha_{x}\circ\Phi.
\end{equation}
In particular, it follows that $\alpha_x$ is a homeomorphism.
For a.e. $x\in X$, there exists a subsequence $\{n_{i_j}\}$ such that
$f_0^{-n_{i_j}}g_0f_0^{n_{i_j}}(x)\to x$ as $j\to\infty$. Then by Proposition \ref{p:alg}, 
$$
\alpha^0_x=\lim_{j\to\infty} (f_0^{-n_{i_j}}g_0f_0^{n_{i_j}})|_{W^{fs}_{f_0}(x)},
$$
and by (\ref{eq:conj}),
$$
\alpha_x=\lim_{j\to\infty} (f^{-n_{i_j}}gf^{n_{i_j}})|_{W^{fs}_f(\Phi(x))}
$$
in the $C^0$-topology. It follows from Theorem \ref{th:compact2}
that the sequence of maps $(f^{-n_{i_j}}gf^{n_{i_j}})|_{W^{fs}_f(\Phi(x))}$
is precompact in the $C^1$-topology. Hence,
it contains a subsequence which converges in the $C^1$-topology, and
$\alpha_x$ is a $C^1$-map for a.e.  $x\in X$.

Let $W^{min}_{f_0,g_0}$ be the sum of eigenspaces of $A$ with eigenvalues of minimal modulus.
Our aim is to show that the map $\Phi$ restricted to the leaves $\exp(W^{min}_{f_0,g_0})x$ is linear
in  suitable $C^\infty$-coordinate systems which depend continuously on $x$. 
The first step is to show that the maps $\alpha_x$ are linear in suitable
coordinates on the fast stable leaves. Consider the measurable function
$$
\sigma(x)=\sup\{\|D(f^{-n}gf^n)_{\Phi(x)}P_{\Phi(x)}\|:\,\, n\in\mathbb{N}\},
$$
which is well defined by Proposition \ref{p:bound}. For $c>0$, let $X(c)$
be the subset of $x\in X$ such that $\sigma(x)\le c$ and the sequence
$\{f_0^{-n_i}g_0f_0^{n_i}(x)\}$ has $x$ as an accumulation point. By property (iv) of good pair and 
Proposition \ref{p:bound}, the set $\cup_{c>0} X(c)$ has full measure in $X$.

Let $Df_0|_{W^{min}_{f_0}}=\lambda\cdot \omega$ where $\lambda\in\mathbb{R}$, $|\lambda|<1$, and
$\omega\in\hbox{Isom}(W^{min}_{f_0})$. Using the Poincare recurrence theorem, 
for a.e. $(x,\omega')\in X(c)\times \hbox{Isom}(W^{min}_{f_0})$, one can construct
a sequence $\{k_j\}$, $k_0=0$, such that
\begin{align*}
f_0^{k_j}(x)\in X(c)\quad\hbox{for every $j\ge 1$}\quad\hbox{and}\quad
\omega^{k_j}\omega'\to \omega'\quad \hbox{as $j\to\infty$.}
\end{align*}
Then $\omega^{k_j}\to id$. Hence, by the Fubini theorem,
for a.e. $x\in X(c)$, there exists a sequence $\{k_j\}$, $k_0=0$, such that
\begin{equation}\label{eq:poincare}
f_0^{k_j}(x)\in X(c)\quad\hbox{for every $j\ge 1$}\quad\hbox{and}\quad
\omega^{k_j}\to id\quad \hbox{as $j\to\infty$.}
\end{equation}

Now we assume that $x\in X(c)$ satisfies (\ref{eq:poincare}).
Let $\{n^{(j)}_i\}$ be a subsequence such that
$$
(f_0^{-n^{(j)}_i}g_0f_0^{n^{(j)}_i})f_0^{k_j}(x)\to f_0^{k_j}(x)\quad\hbox{as $i\to\infty$,}
$$
Then by Proposition \ref{p:alg}(2),
\begin{equation}\label{eq:c0}
(f_0^{-n^{(j)}_i}g_0f_0^{n^{(j)}_i})|_{W^{fs}_{f_0}(f_0^{k_j}(x))}\to
\alpha^0_{f_0^{k_j}(x)}\quad\hbox{as $i\to\infty$}
\end{equation}
in the $C^0$-topology.
A direct computation shows that
$$
 \alpha_{x,j}^0= f_0^{-k_j}\circ\alpha^0_{f_0^{k_j}(x)}\circ f_0^{k_j}
$$
where
$$
\alpha^0_{x,j}:W^{fs}_{f_0}(x)\to W^{fs}_{f_0}(x): \exp(v)x\mapsto
\exp((\omega^{-k_j}A\omega^{k_j})v)x,\quad v\in W^{min}_f.
$$
Clearly, $\alpha^0_{x,j}\to \alpha^0_{x}$ as $j\to\infty$ in the $C^0$-topology.
It follows from (\ref{eq:conj}) that
\begin{equation}\label{eq:equiv}
 \alpha_{x,j}= f^{-k_j}\circ\alpha_{f_0^{k_j}(x)}\circ f^{k_j}
\end{equation}
where
$$
\alpha_{x,j}=\Phi\circ\alpha^0_{x,j}\circ\Phi^{-1} \to \alpha_x\quad\hbox{as $j\to\infty$}
$$
in the $C^0$-topology.

Since $f$ is $C^1$-close to the algebraic map $f_0$, its Mather spectrum on fast stable leaves is
contained in a small interval, and by the nonstationary Sternberg linearization \cite{GK,Guysinsky},
$f|_{W^{fs}_f(z)}$ is linear in suitable coordinate systems.
Namely, there exists a family of $C^\infty$-diffeomorphisms
$$
L_z:W^{min}_{f_0}\to W^{fs}_f(z),\quad z\in X,
$$
such that the map $z\mapsto L_z$ is continuous in the $C^\infty$-topology, $L_z(0)=z$, $D(L_z)_0=id$, and
\begin{equation}\label{eq:L}
(L_{f(z)}^{-1}\circ f\circ L_z)(v)=\rho(z)v, \quad v\in W^{min}_{f_0},
\end{equation}
with $\rho(z)\in\hbox{GL}(W^{min}_{f_0})$, $\|\rho(z)\|<1$.
Consider the sequence of maps
$$ 
G_k=L_{f^{k}(\Phi(x))}^{-1}\circ \alpha_{f_0^{k}(x)}\circ L_{f^{k}(\Phi(x))}:W^{min}_{f_0}\to W^{min}_{f_0}.
$$
We claim that the sequence of maps $G_{k_j}$ restricted to compact sets
is uniformly bounded and equicontinuous in the $C^1$-topology.
This is equivalent to the sequence $\{\alpha_{f_0^{k_j}(x)}\}$ being uniformly bounded
and equicontinuous in the $C^1$-topology.
It follows from (\ref{eq:conj}), (\ref{eq:c0}), and \eqref{eq:equiv} that
$$
F_{i,j}:=(f^{-k_j-n^{(0)}_i}gf^{n^{(0)}_i+k_j})|_{W^{fs}_{f}(\Phi(x))}\to
\alpha_{x,j}\quad\hbox{as $i\to\infty$}
$$
in the $C^0$-topology, and by Theorem \ref{th:compact2}, we may assume, after passing to a subsequence,
that convergence also holds in the $C^1$-topology. By (\ref{eq:equiv}),
\begin{align*}
\alpha_{f_0^{k_j}(x)}=&(f^{k_j}\circ\alpha_{x,j}\circ f^{-k_j})|_{W^{fs}_{f}(f^{k_j}(\Phi(x)))}.
\end{align*}
We observe that
$(f^{k_j}  \circ  F_{i,j} \circ f^{-k_j})|_{W^{fs}_{f}(f^{k_j}(\Phi(x)))}$ converges in the $C^1$-topology
to $\alpha _{f_0 ^{k_j} (x)}$ as $i\to\infty$, and since $f_0^{k_j}(x)\in X(c)$ for all $j$, the derivative of
$$(f^{k_j}  \circ  F_{i,j} \circ f^{-k_j})|_{W^{fs}_{f}(f^{k_j}(\Phi(x)))} = (f  ^{- n_i ^{(0)}} g   f^{ n_i ^{(0)}})|_{W^{fs}_{f}(f^{k_j}(\Phi(x)))}$$
is uniformly bounded over compact sets and $i\in\mathbb{N}$.  
This implies  that the sequence $\{\alpha_{f_0^{k_j}(x)}\}$
is uniformly bounded in the $C^1$-topology. To prove equicontinuity, we observe that
for $z,w\in W^{fs}_f(f^{k_j}(\Phi(x)))$, 
\begin{align*}
\left\|D(\alpha_{f_0^{k_j}(x)})_zP_z-D(\alpha_{f_0^{k_j}(x)})_w P_w\right\|
&\le\|D(f^{k_j}\circ(\alpha_{x,j}-F_{i,j})\circ f^{-k_j})_zP_z\|\\
&+\|D(f^{k_j}F_{i,j}f^{-k_j})_zP_z-D(f^{k_j}F_{i,j}f^{-k_j})_wP_w\|\\
&+\|D(f^{k_j}\circ(F_{i,j}-\alpha_{x,j})\circ f^{-k_j})_wP_w\|.
\end{align*}
Since $F_{i,j}\to \alpha_{x,j}$ as $i\to\infty$ in the $C^1$-topology, taking $i=i(j)$ sufficiently large,
we can make the first and the last terms arbitrary small.
To estimate the middle term, we use that $f_0^{k_j}(x)\in X(c)$ for all $j$ and Proposition \ref{eq:main_est}.
We get
$$
\|D(f^{k_j}F_{i,j}f^{-k_j})_zP_z-D(f^{k_j}F_{i,j}f^{-k_j})_wP_w\|\ll d^{fs}(z,w)^\kappa+\delta_{n_i^{(0)}},
$$
where $\delta_n\to 0$ as $n\to\infty$. This proves equicontinuity, and in fact, the stronger conclusion:
\begin{equation}\label{eq:eq}
\left\|D(\alpha_{f_0^{k_j}(x)})_zP_z-D(\alpha_{f_0^{k_j}(x)})_w P_w\right\|\ll d^{fs}(z,w)^\kappa.
\end{equation}
Let $\rho_k=\prod_{s=k-1}^{0} \rho(f^{s}(\Phi(x)))$ with $\rho$ defined as in (\ref{eq:L}). 
Since $f$ is $C^1$-close to the map $f_0$, which is conformal on the fast stable leaves,
it follows that for some $\lambda<1$ and small $\epsilon>0$, we have
\begin{equation}\label{eq:rho1}
\|\rho_k(x)\|\ll (\lambda+\epsilon)^k\quad\hbox{and}\quad
\|\rho_k(x)^{-1}\|\ll (\lambda-\epsilon)^{-k}
\end{equation}
uniformly on $x\in X$ and $k\in \mathbb{N}$.
We deduce from (\ref{eq:equiv}) and (\ref{eq:L}) that
\begin{equation}\label{eq:g_0}
G_{0,j}=\rho_{k_j}^{-1} G_{k_j}(\rho_{k_j} v),\quad v\in W^{min}_{f_0},
\end{equation}
where $G_{0,j}=L_x^{-1}\circ\alpha_{x,j}\circ L_x\to G_0$ as $j\to\infty$.
Fix a basis $\{e_\ell\}$ of $W^{min}_{f_0}$ and write
$$
G_k(v)=\sum_\ell G_{k,\ell}(v)e_\ell.
$$
Applying the mean value theorem to the functions $t\mapsto G_{k_j,\ell}(t\rho_{k_j}v)$, $t\in [0,1]$,
we deduce that
\begin{equation}\label{eq:bj}
G_{k_j,\ell}(\rho_{k_j}v)=\sum_s \frac{\partial G_{k_j,\ell}}{\partial x_s}(t_{j,\ell}(v)\rho_{k_j}v)(\rho_{k_j}v)_s
\end{equation}
for some $t_{j,\ell}(v)\in [0,1]$.
Hence, by (\ref{eq:g_0}),
\begin{equation}\label{eq:G}
G_{0,j}(v)=(\rho_{k_j}^{-1}B_j(v)\rho_{k_j}) v
\end{equation}
where $B_j(v)$ is the linear map of $W^{min}_{f_0}$ with coefficients coming from \eqref{eq:bj}.
Let $B_j=D(G_{k_j})_{0}$. From \eqref{eq:eq}, we deduce that
$$
\|B_j(v)-B_j\|\ll \|\rho_{k_j}v\|^\kappa,
$$
and it follows from \eqref{eq:rho1} that
\begin{equation}\label{eq:est}
\|\rho_{k_j}^{-1}B_j(v)\rho_{k_j}-\rho_{k_j}^{-1}B_j\rho_{k_j}\|\to 0\quad\hbox{as $j\to\infty$.}
\end{equation}
 Let $B=B(x)$ be a limit point of the sequence of maps
$\rho_{k_j}^{-1}B_j(v)\rho_{k_j}$. The crucial point of our argument is that $B(x)$ is independent
of $v$ because of the equicontinuity estimate.
Taking $j\to\infty$, we deduce from (\ref{eq:G}) that $G_0(v)=B\, v$, and by the definition of $G_0$,
\begin{equation}\label{eq:Bx}
L_x^{-1}(\Phi(\exp(Av)x))= B(x)\, L_x^{-1}(\Phi(\exp(v)x))).
\end{equation}
This equality holds for a.e. $x\in X(c)$ with $c>0$ and hence, for a.e. $x\in X$.
Note that since $\Phi$ is a homeomorphism, the linear
map $B(x)$ is nondegenerate. Although $B(x)$ is defined only for a.e. $x\in X$, it
follows from \eqref{eq:Bx} that it can be extended continuously to the whole space
so that \eqref{eq:Bx} holds everywhere.

Now we consider the maps
$$
\Phi_x(v)=L_{\Phi(x)}^{-1}(\Phi(\exp(v))x),\quad x\in X,\; v\in W^{min}_{f_0},
$$
which satisfy the equivariance relation $\Phi_x\circ A=B(x)\circ \Phi_x$.
Recall that by Theorem \ref{th:ci}, $\Phi_x$ is a $C^1$-diffeomorphism for a.e. $x\in X$.
Hence, we have $D(\Phi_x)_0A=B(x)D(\Phi_x)_0$, and it follows that the map
$\Psi_x:=D(\Phi_x)_0^{-1}\circ \Phi_x$ commutes with the contraction $A$.
We write $A|_{W^{min}_{f_0,g_0}}=\rho\cdot \theta$ where $\rho\in\mathbb{R}$, $|\rho|<1$,
 and $\theta\in \hbox{Isom}(W^{min}_{f_0,g_0})$.
Since 
$$
W^{min}_{f_0,g_0}=\{v\in W^{min}_{f_0}:\, \|A^nv\|=O(\rho^n)\quad\hbox{as $n\to\infty$}\},
$$
and $\Psi_x$ is a $C^1$-map, we deduce that 
$$
\Psi_x(W^{min}_{f_0,g_0})\subset W^{min}_{f_0,g_0}.
$$
We claim that $\Psi_x|_{W^{min}_{f_0,g_0}}$ is linear.
We fix a basis $\{e_\ell\}$ of $W^{min}_{f_0,g_0}$ and write $\Psi:=\Psi_x$ as
$$
\Psi(v)=\sum_\ell \Psi_\ell(v)e_\ell,\quad v\in W^{min}_{f_0,g_0}.
$$
By the mean value theorem,
$$
\Psi_\ell(A^n v)=\sum_s \frac{\partial\Psi_\ell}{\partial x_s}(t_\ell(v)A^n v)(A^n v)_s
$$
for some $t_\ell(v)\in [0,1]$. Hence,
\begin{equation}\label{eq:psi}
\Psi(v)=A^{-n}\Psi(A^n v)=(A^{-n}C_n(v)A^n) v=(\theta^{-n}C_n(v)\theta^n) v
\end{equation}
where $C_n(v)$ is the matrix with coefficients $\frac{\partial\Psi_\ell}{\partial x_s}(t_\ell(v)A^n v)$.
Since $\Psi$ is a $C^1$-map and $\|A\|<1$, it follows that $C_n(v)\to D(\Psi)_0$ as $n\to\infty$. Passing to a subsequence,
we may assume that the sequence of isometries $\theta^n$ also converges. Then it follows from
(\ref{eq:psi}) that $\Psi$ is linear.
We conclude that for a.e. $x\in X$, there exists a linear map $C(x):W^{min}_{f_0,g_0}\to W^{min}_{f_0,g_0}$
such that 
\begin{equation}\label{eq:last}
\Phi(\exp(v)x)=L_{\Phi(x)}(C(x)v),\quad v\in W^{min}_{f_0,g_0}.
\end{equation}
It follows from this relation that $C(x)$ is nondegenerate. Moreover,
by continuity, we may assume that (\ref{eq:last}) holds for all $x\in X$,
and $C(x)$ depends continuously on $x$. Now relation (\ref{eq:last}) also implies that
$\Phi$ is a $C^\infty$-diffeomorphism along the leaves $\exp(W^{min}_{f_0,g_0})x$, and the
partial derivatives along this leaves depend continuously on $x\in X$.

Note that if $\{f_0,g_0\}$ is a good pair, then $\{h^{-1}f_0h,h^{-1}f_0h\}$ is good as well for every
$h\in \alpha_0(\Gamma)$, and we have $W^{min}_{h^{-1}f_0h,h^{-1}g_0h}=(Dh)^{-1}W^{min}_{f_0,g_0}$.
Hence, it follows from the irreducibility of the $\Gamma$-action on $\hbox{Lie}(G)$ that
\begin{equation}\label{eq:irrr}
\sum_{\{f_0,g_0\}\subset\alpha_0(\Gamma)\hbox{\tiny -good}} W^{min}_{f_0,g_0}=\hbox{\rm Lie}(G).
\end{equation}
Now we consider the elliptic differential operator $\mathcal{D}^s=\sum_i \frac{\partial^{2s}}{\partial
  x_i^{2s}}$ where the partial derivatives $\frac{\partial}{\partial x_i}$ span the tangent space
and are taken in directions of $W^{min}_{f_0,g_0}$
for some choice of good pairs $\{f_0,g_0\}$. It follows from the previous paragraph
that $\mathcal{D}^s\Psi$ is continuous for every $s\ge 2$. Hence, by the regularity of
solutions of elliptic PDE, $\Psi$ is $C^\infty$. 
Since $D(\Phi)_x$ is onto when restricted to fast stable distributions of good $f_0$ and its conjugate $f$,
it follows that $D(\Phi)_x$ is onto as well. Thus, $\Psi^{-1}$ is $C^\infty$ by the inverse function theorem.

\section{Existence of good pairs}\label{sec:gen}

\subsection{Tori}

In this section, we set $X=\mathbb{T}^d$, $d\ge 2$, and prove

\begin{prop}\label{p:torus} 
Let $\Gamma$ be a subgroup  of $\Aff(X)$ such that the Zariski
closure of $D\Gamma$ contains $\SL_d$. Then $\Gamma$ contains a good pair.
\end{prop}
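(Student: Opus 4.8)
The plan is to produce two elements $f,g\in\Gamma$ whose linear parts $A:=Df$ and $B:=Dg$ in $\SL_d(\Z)$ form a good pair. Conditions (i)--(iii) of Definition~\ref{d:good} depend only on $A$, and condition (iv), as will be clear below, is insensitive to the translation parts of $f,g$; so it suffices to make a careful choice of $A$ and $B$ inside $D\Gamma$ (more precisely inside $D\Gamma\cap\SL_d$, which is again Zariski dense in $\SL_d$).

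First I would choose $f$. Using the theory of $\mathbb R$-regular elements in Zariski dense subgroups (Prasad, Benoist--Labourie): since $D\Gamma$ is Zariski dense in $\SL_d$, it contains an element $A$ that is $\mathbb R$-regular --- diagonalizable over $\R$ with eigenvalues of pairwise distinct absolute values $|\lambda_1|<\cdots<|\lambda_d|$ --- and that, in addition, lies outside a prescribed finite list of proper subvarieties of $\SL_d$ (such elements are abundant, forming a Zariski dense subset). Since $\prod_i|\lambda_i|=1$ and $d\ge2$, automatically $|\lambda_1|<1$, so the eigenvalue of least modulus is a simple real $\lambda_1$ with $0<|\lambda_1|<1$; hence $W^{min}_{f_0}$ is the line through the corresponding eigenvector $e_1$, and $A$ is partially hyperbolic and semisimple on $W^{min}_{f_0}$. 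This gives (i) and (ii). The finitely many extra subvarieties excluded here will be pinned down in what follows.

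Next I would choose $g$. Write $B=Dg$ in the eigenbasis $e_1,\dots,e_d$ of $A$. Condition (iii) asks that $P^{min}_{f_0}B$ be nondegenerate on $\R e_1$, i.e.\ that the $e_1$-coordinate of $Be_1$ be nonzero; this fails only on a proper subvariety $Z_0$ (it holds for $B=\mathrm{id}$). For (iv) the key point is that, on $\mathbb T^d$, the map $h_n:=f^{-n}gf^n$ is an affine automorphism with \emph{integer} linear part $L_n=A^{-n}BA^n\in\SL_d(\Z)$, acting on characters $\chi_\xi(x)=e^{2\pi i\langle\xi,x\rangle}$ by $\chi_\xi\circ h_n=(\text{a unit phase})\cdot\chi_{L_n^{\top}\xi}$. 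A Weyl-type $L^2$ estimate then shows that if, for each $\xi\in\Z^d\setminus\{0\}$, the map $n\mapsto L_n^{\top}\xi$ is finite-to-one with uniformly bounded fibers, then for every infinite subset $S\subseteq\mathbb N$ and a.e.\ $x$ the averages $\frac{1}{\#(S\cap[1,N])}\sum_{n\in S,\ n\le N}\delta_{h_n(x)}$ converge to Haar measure, so $\{h_n(x):n\in S\}$ is dense; that is exactly (iv), and the translation parts enter only through the inessential phases. Since $L_n^{\top}\xi=L_m^{\top}\xi$ with $n>m$ forces $\det\!\big(A^{\,n-m}BA^{-(n-m)}-B\big)=0$, the fibers are uniformly bounded once $K_B:=\{\ell\ge1:\det(A^{\ell}BA^{-\ell}-B)=0\}$ is finite. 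Now $\ell\mapsto\det(A^{\ell}BA^{-\ell}-B)$ is a generalized exponential polynomial whose bases are products of ratios of the $\lambda_i$ (equivalently, built from eigenvalues of $\mathrm{Ad}(A)$), so $K_B$ is finite unless either this exponential polynomial vanishes identically --- a proper subvariety $W$ in $B$, avoided for instance by a suitably scaled cyclic permutation of the $e_i$ --- or two of its bases differ by a root of unity, whose order $\ell$ must then satisfy $\phi(\ell)\le d^2$. Excluding this last possibility is exactly the finite list of conditions imposed on $A$ above. Thus $Z_0\cup W$ is a proper subvariety of $\SL_d$, and Zariski density of $D\Gamma$ yields $g\in\Gamma$ with $Dg\notin Z_0\cup W$; then $\{f,g\}$ is a good pair.

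The main obstacle is not any single estimate but securing all of these genericity properties \emph{inside $\Gamma$} rather than merely inside the Zariski closure $\overline{D\Gamma}^{\mathrm{Zar}}$: Zariski density only lets one avoid one proper subvariety at a time, while (iv) a priori involves the infinitely many subvarieties $\det([X^{\ell},Y]-\mathrm{id})=0$, $\ell\ge1$. The crucial reduction is that only finitely many of them matter --- precisely those with $\phi(\ell)\le d^2$, because the relevant roots of unity lie in a number field whose degree is controlled by $d$ --- together with the fact, from the theory of Zariski dense subgroups, that $\mathbb R$-regular (and proximal) elements are plentiful enough to avoid any prescribed finite collection of proper subvarieties. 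One then applies Zariski density of $D\Gamma$, and of $D\Gamma\times D\Gamma$ in $\SL_d\times\SL_d$, to this finite collection to finish.
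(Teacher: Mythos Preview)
Your overall architecture coincides with the paper's: pick $f\in\Gamma$ with $Df$ $\mathbb{R}$-regular (via Prasad/Benoist--Labourie), then use Zariski density to pick $g\in\Gamma$ with $Dg$ outside a finite union of proper subvarieties encoding (iii) and (iv). The paper's proof is exactly this; the only difference is in how (iv) is reduced to finitely many algebraic conditions. The paper invokes Lemma~\ref{th:bg} (the Bergelson--Gorodnik mixing criterion) together with Lemma~\ref{lem:dense}, which directly hands you the finite list $\det([Df^{\ell},X]-\mathrm{id})\neq 0$ for $\phi(\ell)\le d^2$; you instead try to argue from scratch that the fibers of $n\mapsto L_n^{\top}\xi$ are bounded, via a Skolem--Mahler--Lech analysis of the exponential polynomial $\ell\mapsto\det(A^{\ell}BA^{-\ell}-B)$.

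Two soft spots in your version of (iv). First, the step ``the $L^2$ variance bound $O(1/N_S)$ gives a.e.\ equidistribution along every infinite $S$'' is not justified: the $L^2$ bound yields a.e.\ convergence only along lacunary $N$, and filling the gaps costs you exactly the maximal inequality you do not have. What you actually need is only density, and that \emph{does} follow cleanly from your bounded-fiber condition, since it implies $\int\chi_\xi(h_n x)\overline{\chi_\eta(x)}\,dx=0$ for all but finitely many $n$ (each $\xi\neq 0$, each $\eta$), hence mixing of the sequence $h_n$, and then the short measure-theoretic argument of Lemma~\ref{lem:dense} gives density along any subsequence. Second, your bound $\phi(\ell)\le d^2$ is lifted from Lemma~\ref{th:bg}, but in your own exponential-polynomial argument the bases are monomials in the eigenvalues of $A$, so a root-of-unity ratio lives in the splitting field of the characteristic polynomial, of degree at most $d!$; thus the natural bound there is $\phi(\ell)\le d!$, not $d^2$. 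This is harmless---the point is only finiteness---but the $d^2$ is not what your argument produces. (Also a small slip: you write ``(i)--(iii) depend only on $A$''; condition (iii) depends on $B$, as you correctly treat it later.)
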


We will use the following lemma, which is easy to prove using Fourier analysis (see, for example,
\cite[Corollary~1.6 and Remark~1.8]{bg}). Let $\phi$ be the Euler totient function.

\begin{lemma}\label{th:bg}
Let $f_1,f_2\in \Aff(X)$ be such that for every $l\ge 1$ satisfying $\phi(l)\le d^2$,
the map $Df_1^{-l}Df_2^l$ does not have eigenvalue 1.
Then for every $\phi_1,\phi_2\in L^2(X)$,
\begin{equation*}\label{eq:mixing}
\int_X \phi_1(f_1^n(x))\phi_2(f_2^n(x))d\mu(x)\to \left(\int_X
  \phi_1\,d\mu\right) \left(\int_X \phi_2\,d\mu\right)\quad\hbox{as $n\to\infty$.}
\end{equation*}
\end{lemma}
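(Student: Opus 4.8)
The plan is to expand $\phi_1,\phi_2$ in Fourier series and reduce everything to characters of $\mathbb{T}^d$. Since $f_1,f_2$ preserve $\mu$, Cauchy--Schwarz shows that the bilinear forms $(\phi_1,\phi_2)\mapsto\int_X\phi_1(f_1^nx)\phi_2(f_2^nx)\,d\mu(x)$ and the candidate limit $(\phi_1,\phi_2)\mapsto\big(\int_X\phi_1\,d\mu\big)\big(\int_X\phi_2\,d\mu\big)$ are all bounded in absolute value by $\|\phi_1\|_2\|\phi_2\|_2$, uniformly in $n$. Hence a routine $\varepsilon/3$ argument reduces the claim to $\phi_i$ ranging over the dense subspace of trigonometric polynomials, and then, by bilinearity, to $\phi_1=e_k$, $\phi_2=e_m$, where $e_j(x)=e^{2\pi i\,j\cdot x}$ and $j\in\mathbb{Z}^d$.

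For characters I would write $f_i(x)=A_ix+b_i$ with $A_i=Df_i\in\mathrm{GL}_d(\mathbb{Z})$ and $b_i\in\mathbb{T}^d$, so that $f_i^n(x)=A_i^nx+b_i^{(n)}$ for suitable $b_i^{(n)}\in\mathbb{T}^d$, and then
$$
e_k(f_1^nx)\,e_m(f_2^nx)=c_n\,e_{\,(A_1^\top)^n k+(A_2^\top)^n m}(x)
$$
for a unimodular constant $c_n$. Integrating over $X$, the left-hand side of the lemma equals $c_n$ when $(A_1^\top)^n k+(A_2^\top)^n m=0$ and $0$ otherwise. If $k=0$ or $m=0$ this equals $\big(\int_X e_k\,d\mu\big)\big(\int_X e_m\,d\mu\big)$ for every $n$ (both are $0$ unless $k=m=0$, in which case both are $1$), so the convergence is trivial. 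Thus I may assume $k,m\neq0$, so that the target limit is $0$; since the correlation is bounded by $1$ and vanishes whenever $(A_1^\top)^n k\neq-(A_2^\top)^n m$, it suffices to prove that $S:=\{n\ge0:\ (A_1^\top)^n k=-(A_2^\top)^n m\}$ is finite.

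The final step is where the hypothesis is used, and it is the only part that is not a bare Fourier computation. Suppose $S$ is infinite. Each coordinate of $w_n:=(A_1^\top)^n k+(A_2^\top)^n m\in\mathbb{Z}^d$ is an integer linear recurrence sequence, annihilated by the product of the characteristic polynomials of $Df_1$ and $Df_2$, and vanishing on the infinite set $S$; by the Skolem--Mahler--Lech theorem $S$ therefore contains an arithmetic progression $a+b\mathbb{Z}_{\ge0}$ with $b\ge1$. Restricting to it and setting $v:=(A_1^\top)^a k\neq0$, the relations $w_{a+jb}=0$ give $(Df_1^\top)^{jb}v=(Df_2^\top)^{jb}v$ for all $j\ge0$; on the nonzero $\mathbb{Q}$-subspace $U:=\mathrm{span}_\mathbb{Q}\{(Df_1^\top)^{jb}v:j\ge0\}$ the maps $(Df_1^\top)^{b}$ and $(Df_2^\top)^{b}$ coincide, hence have a common eigenvalue $\tau=\mu^b=\nu^b$ with $\mu\in\mathrm{spec}(Df_1)$, $\nu\in\mathrm{spec}(Df_2)$, so $\mu/\nu$ is a root of unity. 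Since $Df_1,Df_2$ are integral, $\mu$ and $\nu$ are algebraic units of degree $\le d$, so $\mu/\nu$ lies in a field of degree $\le d^2$ over $\mathbb{Q}$, whence its order $l$ satisfies $\phi(l)\le d^2$; descending the vector relation to this exponent produces a nonzero vector killed by $(Df_1)^l-(Df_2)^l$, i.e.\ $1\in\mathrm{spec}(Df_1^{-l}Df_2^l)$ with $\phi(l)\le d^2$, contradicting the hypothesis. The hard part is exactly this last descent: passing from the equality of the two $b$-th powers on a vector to the genuine singularity of $(Df_1)^l-(Df_2)^l$ at an exponent of small totient, rather than merely to a shared eigenvalue. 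This is carried out in \cite[Corollary~1.6 and Remark~1.8]{bg}, to which I would refer for the details.
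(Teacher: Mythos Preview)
Your proposal is correct and follows exactly the route the paper indicates: the paper does not give a proof but simply says the lemma ``is easy to prove using Fourier analysis (see, for example, \cite[Corollary~1.6 and Remark~1.8]{bg})''. Your Fourier reduction to characters and the recasting of the problem as finiteness of the set $S=\{n:(A_1^\top)^nk=-(A_2^\top)^nm\}$ are standard and correctly carried out, and for the arithmetic part you ultimately defer to the same reference~\cite{bg}, so the approaches coincide.

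One remark on your sketch of that final step: you correctly extract from Skolem--Mahler--Lech an arithmetic progression of common difference $b$ on which $(A_1^\top)^b$ and $(A_2^\top)^b$ agree on a nonzero rational subspace $U$, and you correctly bound the order $l$ of any root of unity $\mu/\nu$ (with $\mu,\nu$ eigenvalues of $Df_1,Df_2$) by $\phi(l)\le d^2$ via the degree estimate $[\mathbb{Q}(\mu,\nu):\mathbb{Q}]\le d^2$. However, the passage you flag as ``the hard part''---turning the eigenvalue coincidence $\mu^l=\nu^l$ into an honest singularity of $(Df_1)^l-(Df_2)^l$---is genuinely nontrivial: a shared eigenvalue of $(Df_1)^l$ and $(Df_2)^l$ does not by itself force $(Df_1)^l-(Df_2)^l$ to be singular (consider two diagonal matrices with the same spectrum but permuted entries). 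The argument in~\cite{bg} handles this by exploiting the full vector relation on $U$, not merely the spectral coincidence; your decision to cite~\cite{bg} for this step is therefore the right one, and matches what the paper does.
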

If 
the conclusion of Lemma \ref{th:bg} holds, then we call the pair $\{f_1,f_2\}$ {\it mixing}.
Mixing pairs can be used to construct affine maps satisfying property (iv) of good pairs.

\begin{lemma}\label{lem:dense}
Let $f,g\in \Aff(X)$ and suppose the pair $\{f^{-1}, gf^{-1}g^{-1}\}$ is mixing.
Then for every subsequence $\{n_i\}$ and for a.e. $x\in X$, the sequence
$\{f^{-n_i}gf^{n_i}(x)\}_{n\ge 0}$ is dense in $X$.
\end{lemma}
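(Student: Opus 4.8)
The plan is to first use the hypothesis to show that the sequence of affine maps $h_n := f^{-n}\circ g\circ f^n$ becomes asymptotically mixing against the identity, and then to deduce the density statement by a short Borel--Cantelli-type argument.

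\emph{Step 1.} I would show that for all $\phi,\psi\in L^2(X)$,
\[
\int_X \phi(h_n(x))\,\psi(x)\,d\mu(x)\ \longrightarrow\ \Big(\int_X\phi\,d\mu\Big)\Big(\int_X\psi\,d\mu\Big)\qquad\text{as }n\to\infty .
\]
First change variables by $u=f^n(x)$, which is permissible since $f^n\in\Aff(X)$ preserves $\mu$; the integral becomes $\int_X\phi(f^{-n}(g(u)))\,\psi(f^{-n}(u))\,d\mu(u)$, using $h_n(f^{-n}(u))=f^{-n}(g(u))$. Next change variables by $v=g(u)$, again $\mu$-preserving, obtaining $\int_X\phi(f^{-n}(v))\,\psi(f^{-n}(g^{-1}(v)))\,d\mu(v)$. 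Now apply the elementary identity $(gf^{-1}g^{-1})^n=gf^{-n}g^{-1}$, equivalently $f^{-n}\circ g^{-1}=g^{-1}\circ(gf^{-1}g^{-1})^n$, to rewrite this integral as $\int_X\phi((f^{-1})^n(v))\,\tilde\psi((gf^{-1}g^{-1})^n(v))\,d\mu(v)$ with $\tilde\psi:=\psi\circ g^{-1}$. Applying the mixing hypothesis for the pair $\{f^{-1},gf^{-1}g^{-1}\}$ with the test functions $\phi$ and $\tilde\psi$, and using that $\int_X\tilde\psi\,d\mu=\int_X\psi\,d\mu$ by the $g^{-1}$-invariance of $\mu$, concludes this step.

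\emph{Step 2.} Fix a subsequence $\{n_i\}$ (so $n_i\to\infty$) and a countable basis $\mathcal U$ of nonempty open subsets of $X$. For each $U\in\mathcal U$ set
\[
B_U:=\{x\in X:\ h_{n_i}(x)\notin U\ \text{for all }i\}=\bigcap_i\big(X\setminus h_{n_i}^{-1}(U)\big),
\]
which is a Borel set since each $h_{n_i}$ is continuous. By construction $B_U\cap h_{n_i}^{-1}(U)=\emptyset$ for every $i$, so $\mu(B_U\cap h_{n_i}^{-1}(U))=0$ for all $i$; but Step 1, applied with $\phi=\mathbf 1_U$ and $\psi=\mathbf 1_{B_U}$, gives $\mu(B_U\cap h_{n_i}^{-1}(U))\to\mu(U)\mu(B_U)$, whence $\mu(U)\mu(B_U)=0$ and therefore $\mu(B_U)=0$ because $\mu(U)>0$. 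Consequently $\bigcup_{U\in\mathcal U}B_U$ is a null set, and for every $x$ in its complement the sequence $\{h_{n_i}(x)\}$ meets every member of $\mathcal U$ and is thus dense in $X$.

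The only place requiring care is the computation in Step 1: one must keep straight how the two measure-preserving substitutions turn the iterated conjugate $f^{-n}gf^n$ into the pair $(f^{-1})^n$, $(gf^{-1}g^{-1})^n$ to which the hypothesis applies, and the observation that makes this work is precisely the identity $(gf^{-1}g^{-1})^n=gf^{-n}g^{-1}$. Beyond this bookkeeping I do not expect a serious obstacle; Step 2 is a routine consequence of mixing.
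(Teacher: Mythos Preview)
Your proof is correct and follows essentially the same approach as the paper's: the paper also reduces to the mixing hypothesis by the two measure-preserving substitutions (stated more tersely as ``by invariance of the measure''), and then runs the identical Borel--Cantelli argument with $B_U=A^c$ where $A=\bigcup_i \delta_{n_i}^{-1}(U)$. Your Step~1 is simply a more explicit version of the paper's one-line reduction.
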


\begin{proof}
We have
$$
\int_X \phi_1(g f^{-n}g^{-1}(x))\phi_2(f^{-n}(x))d\mu(x)\to \left(\int_X
  \phi_1\,d\mu\right) \left(\int_X \phi_2\,d\mu\right)\quad\hbox{as $n\to\infty$}
$$
for every $\phi_1,\phi_2\in L^2(X)$. By invariance of the measure, this also
implies that
$$
\int_X \phi_1(x)\phi_2(f^{-n}gf^n(x))d\mu(x)\to \left(\int_X
  \phi_1\,d\mu\right) \left(\int_X \phi_2\,d\mu\right)\quad\hbox{as $n\to\infty$}
$$
for every $\phi_1,\phi_2\in L^2(X)$.

Now we show that for $\delta_n=f^{-n}gf^n$, the
sequence $\{\delta_{n_i}x\}$ is dense in $X$ for a.e. $x\in X$.
Let $U$ be a nonempty open subset of $X$ and $A=\cup_{i\ge 0}
\delta_{n_i}^{-1}(U)$. We have
$$
0=\int_X \chi_U(\delta_{n_i} (x))\chi_{A^c}(x)\,d\mu(x)\to \mu(U)\mu(A^c).
$$
This implies that $\mu(A^c)=0$, i.e. for a.e. $x\in X$,
$$
\{\delta_{n_i}x\}_{i\ge 0}\cap U\ne \emptyset.
$$
Since $X$ has countable base of topology, this proves the lemma.
\end{proof}

\begin{proof}[Proof of Proposition \ref{p:torus}]
Since $D\Gamma$ is Zariski dense, there is $f\in\Gamma$ such that $Df$ is $\mathbb{R}$-regular
(see \cite{bl,p}).
In particular, $Df$ is semisimple and hyperbolic.
Because of Lemmas \ref{th:bg} and \ref{lem:dense}, it suffices to
find $g\in\Gamma$ such that $Dg$ belongs to the set
\begin{align*}
\left\{X\in\hbox{SL}_d:\;\; \det(P^{min}_fX|_{W^{min}_f})\ne 0,\quad \det([Df^l,X]-id)\ne 0\;\;\hbox{for $\phi(l)\le d^2$}\right\}.
\end{align*}
One can check that this is a nonempty Zariski open subset of $\hbox{SL}_d$.
Hence, existence of such $g\in\Gamma$ follows from Zariski density.
\end{proof}

\subsection{Semisimple groups}
Let $G$ be a connected semisimple Lie groups with no compact factors,
$\Lambda$ a lattice in $G$, and $X=G/\Lambda$.

\begin{prop}\label{p:semi}
Let $\Gamma$ be a subgroup of $\Aff(X)$ such that
the Zariski closure of $D\Gamma$ contains $\hbox{\rm Ad}(G)$. Then $\Gamma$ contains a good pair.
\end{prop}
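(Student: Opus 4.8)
The plan is to run the proof of Proposition \ref{p:torus} in this homogeneous setting, replacing its two torus-specific tools by: the existence of $\mathbb{R}$-regular elements in Zariski dense subgroups of reductive groups \cite{bl,p}; and the vanishing of matrix coefficients of $L^{2}_{0}(X)$ at infinity (Howe--Moore), which takes the place of Lemma \ref{th:bg}. Since $X=G/\Lambda$ is compact there is a spectral gap on $L^{2}_{0}(X)$, so in fact the matrix coefficients of the left regular representation $\pi_{L}$ decay \emph{uniformly} on the unit ball of $L^{2}_{0}(X)$; this uniformity is what will let us disregard the automorphism parts of affine maps. First I would pass to the finite-index subgroup $\Gamma_{0}=\{\gamma\in\Gamma:\ D\gamma\in\hbox{Ad}(G)\}$, whose image under $D$ remains Zariski dense in the connected group $\hbox{Ad}(G)$; the condition $D\gamma\in\hbox{Ad}(G)$ forces the automorphism part of $\gamma$ to be inner, so every $\gamma\in\Gamma_{0}$ can be written as $L_{h}\circ R_{w}$ with $h\in G$, $w\in N_{G}(\Lambda)$, where $R_{w}$ (right translation) is a measure-preserving affine map with trivial derivative, and $D(L_{h}\circ R_{w})=\hbox{Ad}(h)$. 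It suffices to produce a good pair inside $\Gamma_{0}$.

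Next I would pick $f_{0}\in\Gamma_{0}$ with $Df_{0}=\hbox{Ad}(h_{1})$ for an $\mathbb{R}$-regular $h_{1}$ (available by \cite{bl,p}); a further appeal to Zariski density lets me also assume that the least modulus $\lambda_{0}$ of an eigenvalue of $Df_{0}$ is strictly separated from the others (this excludes a proper Zariski closed subset of $\hbox{Ad}(G)$). Then $Df_{0}$ is semisimple, $\lambda_{0}<1$ because $G$ is noncompact, and $Df_{0}$ is partially hyperbolic, which is property (i) of Definition \ref{d:good}; the fast stable space $W^{min}_{f_{0}}$ is the highest restricted root space of $\hbox{Ad}(h_{1})$, on which $Df_{0}$ acts as the scalar $\lambda_{0}$ up to an isometry, so property (ii) holds. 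Set $P^{+}=\{g\in G:\ \{h_{1}^{-n}gh_{1}^{n}\}_{n\ge0}\text{ is bounded in }G\}$; since $h_{1}$ is $\mathbb{R}$-regular, $P^{+}$ is a proper minimal parabolic subgroup of $G$.

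It remains to choose $g_{0}\in\Gamma_{0}$. Property (iii) requires $P^{min}_{f_{0}}\circ Dg_{0}$ to be invertible on $W^{min}_{f_{0}}$, which is the nonvanishing of a polynomial in $Dg_{0}$ and holds for $Dg_{0}=\hbox{id}$, hence is a nonempty Zariski open condition. For property (iv) I would note that the proof of Lemma \ref{lem:dense} uses only invariance of the measure, a countable base of the topology, and the convergence $\int_{X}\phi_{1}(f_{1}^{n}x)\phi_{2}(f_{2}^{n}x)\,d\mu\to\bigl(\int_{X}\phi_{1}\bigr)\bigl(\int_{X}\phi_{2}\bigr)$; so the same argument yields (iv) for $\{f_{0},g_{0}\}$ once this holds for $\{f_{1},f_{2}\}=\{f_{0}^{-1},\ g_{0}f_{0}^{-1}g_{0}^{-1}\}$ and all $\phi_{1},\phi_{2}\in L^{2}(X)$. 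Writing $f_{0}=L_{h_{1}}\circ R_{w_{1}}$, $g_{0}=L_{k_{1}}\circ R_{w_{2}}$ and changing variables, this integral becomes $\langle\pi_{L}(\gamma_{n})\phi_{1},\ \pi_{R}(\delta_{n})\bar\phi_{2}\rangle$ after splitting each $\phi_{i}$ into its mean and its $L^{2}_{0}$-part, where $\gamma_{n}=k_{1}h_{1}^{-n}k_{1}^{-1}h_{1}^{n}\in G$ depends only on $h_{1},k_{1}$, and $\delta_{n}\in N_{G}(\Lambda)$ contributes only a unitary $\pi_{R}(\delta_{n})$ of $L^{2}_{0}(X)$. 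By the \emph{uniform} decay of the coefficients of $\pi_{L}$ this tends to $0$ whenever $\gamma_{n}\to\infty$ in $G$, no matter what $\delta_{n}$ does; and $\gamma_{n}=k_{1}\cdot(h_{1}^{-n}k_{1}^{-1}h_{1}^{n})$ leaves every compact set unless $k_{1}\in P^{+}$. Hence the condition $Dg_{0}\notin\hbox{Ad}(P^{+})$ --- again nonempty Zariski open --- forces (iv). Since $\hbox{Ad}(G)$ is irreducible, the two Zariski open conditions on $Dg_{0}$ have nonempty intersection, and by Zariski density of $D\Gamma_{0}$ there is $g_{0}\in\Gamma_{0}$ realizing it; then $\{f_{0},g_{0}\}$ is a good pair. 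The point I expect to require the most care is exactly the reduction of the mixing integral to the single matrix coefficient above in the presence of nontrivial right-translation (i.e.\ automorphism) parts: the sequence $\pi_{R}(\delta_{n})$ need not converge, so the uniform form of Howe--Moore, valid here because $\Lambda$ is cocompact, is genuinely needed; identifying the parabolic $P^{+}$ precisely and checking nonemptiness of the Zariski open conditions are comparatively routine.
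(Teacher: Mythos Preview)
Your scaffold matches the paper's exactly: obtain $f_0$ with $Df_0$ $\mathbb{R}$-regular (giving (i) and (ii)), then use Howe--Moore together with Lemma \ref{lem:dense} for (iv), and a Zariski-density argument to find $g_0$ satisfying (iii) and the mixing condition. There is, however, a genuine gap when $G$ has more than one simple factor. Your criterion for mixing is that $\gamma_n=k_1h_1^{-n}k_1^{-1}h_1^{n}$ leave every compact set of $G$, i.e.\ $k_1^{-1}\notin P^{+}$. But Howe--Moore on $L^2_0(G/\Lambda)$, uniform or not, requires divergence in \emph{each} simple factor: if $\Lambda$ is reducible then $L^2_0(X)$ contains nonzero vectors fixed by an entire factor $G_i$, and matrix coefficients against such vectors do not decay when $\gamma_n$ stays bounded in $G_i$. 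The paper handles this by working factor-by-factor, defining a proper parabolic $P_i\subset G_i$ for each projection $\pi_i$ and requiring $\pi_i(Dg)\notin P_i$ for all $i$; your single parabolic $P^{+}\subset G$ captures only the coarser condition $\gamma_n\to\infty$ in $G$. (Separately, the extra hypothesis that the least eigenvalue modulus of $Df_0$ be ``strictly separated'' is unnecessary --- $\mathbb{R}$-regularity already makes $Df_0$ semisimple, hence (ii) --- and is not a Zariski-open condition, so the appeal to Zariski density to arrange it is not on firm ground; simply drop it.)

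Your explicit treatment of the right-translation parts $R_w$ is more careful than the paper, which states the mixing criterion purely in terms of $Df,Dg$ without comment; invoking the uniform form of Howe--Moore (available via the spectral gap for cocompact $\Lambda$ in \emph{simple} $G$) to neutralize the unitaries $\pi_R(\delta_n)$ is a legitimate device in that case. A lighter alternative, valid for arbitrary semisimple $G$ and avoiding uniform decay entirely, is to observe that $R_w$ depends only on the class of $w$ in the finite group $N_G(\Lambda)/\Lambda$; splitting into finitely many subsequences along which the right-translation part is constant, one absorbs it into $\phi_2$ and applies ordinary (non-uniform) Howe--Moore to the left-translation part.
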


\begin{proof}
Since $D\Gamma$ contains a finite index subgroup consisting of inner automorphisms,
we may assume without loss of generality that $D\Gamma$ is a subgroup of $\hbox{Ad}(G)$.
It follows from Zariski density \cite{bl,p} that $\Gamma$ contains an element $f$
such that $Df$ is $\mathbb{R}$-regular.
In particular, it is partially hyperbolic and semisimple, and hence it satisfies properties
(i)--(ii) of the definition of a good pair.
If we choose $g\in\Gamma$ so that the pair $\{f^{-1},gf^{-1}g^{-1}\}$ is mixing, then
by Lemma \ref{lem:dense}, $f$ and $g$ will satisfy property (iv) of the definition of
a good pair. By the Howe--Moore theorem, the pair $\{f^{-1},gf^{-1}g^{-1}\}$
is mixing provided that for all projections
$\pi_i:\hbox{Ad}(G)\to \hbox{Ad}(G_i)$ on simple factors of $\hbox{Ad}(G)$, the sequence
$\{\pi_i(Dg(Df)^{-n}(Dg)^{-1}(Df)^n)\}$ is divergent. Since $\pi_i(Df)$ is also
$\mathbb{R}$-regular,
$$
P_i=\{g\in G_i:\, \pi_i(Df)^{-n}\cdot g\cdot \pi_i(Df)^{n}\hbox{ is nondivergent}\}
$$
is a proper parabolic subgroup of $G_i$. By Zariski density, there exists $g\in \Gamma$
such that $\pi_i(Dg)\notin P_i$ for all $i$, and  $P_f^{min}(Dg):W^{min}_f\to W^{min}_f$ is nondegenerate.
Such $f$ and $g$ provide a good pair.
\end{proof}

\ignore{
A simple modification of the above argument also gives

\begin{prop}\label{p:semi2}
Assume that $G$ is simple. Let $\Gamma$ be a subgroup of $\Aff(X)$ such that
the Zariski closure of $D\Gamma$ is noncompact and semisimple. Then $\Gamma$ contains a good pair.
\end{prop}
}

\end{document}